\numberwithin{equation}{section}
\theoremstyle{plain}
	\newtheorem{theorem}{Theorem}
	\newtheorem{lemma}[theorem]{Lemma}
	\newtheorem{corollary}[theorem]{Corollary }
\theoremstyle{definition}
	\newtheorem{remark}[theorem]{Remark}
\newcommand{\N}{\mathbb N}
\newcommand{\R}{\mathbb R}
\newcommand{\C}{\mathbb C}
\newcommand{\U}{\mathrm U}
\newcommand{\Herm}{\mathrm{Herm}}
\newcommand{\dv}{\mathrm{d}}
\newcommand{\tr}{\mathrm{Tr}\,}
\newcommand{\eig}{\mathrm{eig}}
\newcommand{\diag}{\mathrm{diag}}
\newcommand{\sgn}{\mathrm{sgn}}
\newcommand{\la}{\left\langle}
\newcommand{\ra}{\right\rangle}
\newcommand{\ind}{\mathbbm{1}}
\newcommand{\re}{\mathrm{Re}\,}
\newcommand{\im}{\mathrm{Im}\,}
\newcommand{\mylongtitle}[1]{%
	\ifodd\value{page}%
	\protect\parbox{0.97\linewidth}{#1}\hfill%
	\else%
	\hfill\protect\parbox{0.97\linewidth}{#1}%
	\fi%
}
\begin{document}

\title[\mylongtitle{\centering A rate of convergence when generating stable invariant \\ Hermitian random matrix ensembles}]{A rate of convergence when generating stable invariant Hermitian random matrix ensembles}
	
\author[Mario Kieburg]{Mario Kieburg}
\address[MK]{School of Mathematics and Statistics, University of Melbourne, 813 Swanston Street, Parkville, Melbourne VIC 3010, Australia}
\email[MK]{(MK) m.kieburg@unimelb.edu.au}

\author[Jiyuan Zhang]{Jiyuan Zhang}
\address[JZ]{Department of Mathematics, KU Leuven, Celestijnenlaan 200B, B-3001 Leuven, Belgium}
\email[JZ]{(JZ) jiyuanzhang.ms@gmail.com}

\date{\today}

\begin{abstract}
Recently, we have classified Hermitian random matrix ensembles that are invariant under the conjugate action of the unitary group and stable with respect to matrix addition. Apart from a scaling and a shift, the whole information of such an ensemble is encoded in the stability exponent determining the ``heaviness'' of the tail and the spectral measure that describes the anisotropy of the probability distribution. In the present work, we address the question how these ensembles can be generated by the knowledge of the latter two quantities. We consider a sum of a specific construction of identically and independently distributed random matrices that are based on Haar distributed unitary matrices and a stable random vectors. For this construction, we derive the rate of convergence in the supremum norm and show that this rate is optimal in the class of all stable invariant random matrices for a fixed stability exponent. As a consequence we also give the rate of convergence in the total variation distance.
\end{abstract}

\maketitle

\section{Introduction}

Heavy tailed random matrix ensembles naturally appear in several applications ranging from statistics to physics, engineering and machine learning, e.g., see~\cite{BJNPZ2001,MS2003,BGW2006,BCP2008,AFV2010,BT2012,MSG2014,Kanazawa2016,OT2017, Minsker2018,MM2018,MM2019,Heiny2019a,Shin}. Therefore, it is a very intriguing question to study also random matrix ensembles that are stable with respect to matrix addition, see~\eqref{stab.cond}. The simplest class from its construction are heavy-tailed Wigner matrices where the matrix entries are identically and independently distributed heavy tailed random variables. They were heavily studied in physics and mathematics~\cite{CB1994,BJNPZ2001,Heiny2019a,Soshnikov2004,BBP2007,BJNPZ2007,BAG2008,ABAP2009,BCHJ2021,Vershynin2012, BGGM2014,BGM2016,HM2017,BG2017,Male2017,GLPTJ2017}. The stable version of these matrices are then readily found by choosing ordinary stable real random variables as the real independent matrix entries.

Indeed heavy tailed Wigner matrices exhibit interesting statistics, especially when the matrix dimension tends to infinity. For instance the statistics of the largest eigenvalues may converge to the Poisson statistics~\cite{Soshnikov2004,BBP2007,ABAP2009,TBT2016}  and the eigenvectors perform a transition from being localised~\cite{BT2012,CB1994,TBT2016} for the largest eigenvalues to being delocalised~\cite{BG2017} in the bulk. 

Going away from Wigner matrices and introducing group invariance such as unitarily invariant random matrices~\cite{Tierz,AFV2010,Kanazawa2016,BJNPZ2007,BJJNPZ2002,AV2008,AMAV2009,CM2009,Thomas, Tdistribution,KieMont},
meaning matrices that are invariant under the conjugate action of the unitary group (see~\eqref{uni.inv}), may change the picture. While also Poisson statistics has been observed~\cite{KieMont} for the largest eigenvalues in the limit of large matrix dimensions, the eigenvectors remain always delocalised. This is due to unitary invariance as then the eigenvectors are distributed with respect to the Haar measure of the unitary group. To understand this and other peculiarities for invariant stable random matrices, we have started an approach from finite matrix dimension. In  a recent work~\cite{KZ21}, we have classified all unitarily invariant stable random matrix ensembles on the Hermitian matrices $\Herm(N)$ and we have identified their domains of attraction meaning we have proven central limit theorems. In the next step, which is the present work about, we would like to show how those invariant stable random matrix ensembles can be created via an approximation and how fast the convergence of this approximation is.

\subsection{Motivation}

It is well known in probability theory that for any real random variable $X$ having a finite second moment, the standardised sum of identical and independent copies of $X$
\begin{equation}\label{iid_sum}
	Y_m=\frac{1}{m^{1/2}}\sum_{j=1}^m(X_j-\mathbb E X_j)
\end{equation}
tend to the standard normal distribution in the weak sense, where $\mathbb E f(X)$ denotes the expectation value. This result is known as the classical central limit theorem. There are a lot of analysis on this convergence in the literature, particularly on the rate of convergence---for example the Berry-Esseen theorem (see e.g.~\cite[Ch. XVI Sec. 3]{Fellerbook}) states the difference between the Kolmogorov distances of the sum and the limit. The classical central limit theorem can also be extended to the multivariate case, as well as results regarding rates of convergence; see~e.g.~\cite{Fellerbook,GKCbook}.

When the condition that the random variable has finite second moment is dropped, the limit of the sum~\eqref{iid_sum} may not always be a normal distribution; in the worse case when $\mathbb E X$ also does not exist, the sum is not even well-defined. For those heavy-tailed distributions, one instead considers the sum
\begin{equation}\label{iid_sum2}
Y_m=\frac{1}{B_m}\sum_{j=1}^m X_j-A_m
\end{equation}
where $A_m\in\mathbb{R}$ and $B_m\in\mathbb{R}_+$ are sequences of real numbers. If there exist two such sequences $\{A_m\}$ and $\{B_m\}$ that the sum~\eqref{iid_sum2} tends to a random variable in the weak sense, we call the corresponding limiting distribution the stable distribution, and $X$ is said to belong to the domain of attraction of this limiting distribution. This is the generalised version of the classical central limit theorem; see e.g.~\cite{Rvaceva62,Shimura90,GKCbook} for further details.

The generalised central limit theorem can also be extended to the multivariate case, where a multivariate stable distribution is introduced; see e.g.~\cite{STbook} for details. A multivariate stable distribution $\R^d$ can be fully classified by a quadruplet:
\begin{enumerate}[(1)]
	\item the stability parameter $\alpha\in(0,2]$ which determines the tail behaviour of the density;
	\item the spectral measure $H$ which is a probability measure defined on the unit sphere and encodes the angular distribution of the random vector;
	\item the scaling parameter $\sigma\in(0,\infty)$ which fixes the width of the distribution;
	\item the shift parameter $\mu\in\R^d$ which is related to the median of the distribution.
\end{enumerate}
Under this classification, we denote a stable distribution as $S(\alpha,\sigma H,\mu)$. Note that this has a continuous probability density function on a vector space, and so we also denote this function with variable $X$ as $S(\alpha,\sigma H,\mu;X)$. The product $\sigma H$ indicates that these two objects only appear in this combination. The distribution $S(\alpha,\sigma H,\mu)$  has the following characteristic function
\begin{equation}\label{char.stable.vec}
\mathbb E\exp(i\xi^\top X)=\displaystyle
\exp\left(-\sigma\int_{\mathbb S^{d-1}}H(\dv r) \nu_\alpha(r^\top\xi)+i\xi^\top\mu\right)
\end{equation}
where
\begin{equation}\label{nualpha}
\nu_\alpha(u):=\begin{cases}\displaystyle
|u|^\alpha\left(1-i\,\sgn(u)\tan\frac{\pi\alpha}{2}\right)=\frac{(-iu)^\alpha}{\cos(\pi\alpha/2)
},&\alpha\in(0,1)\cup(1,2],\\\displaystyle
|u|\left(1+\frac{2i}{\pi}\sgn(u)\log|u|\right)=\frac{2iu}{\pi}\log(-iu),&\alpha= 1.
\end{cases}
\end{equation}
We employ the principal value of the complex exponent and the complex logarithm, meaning the branch cut is along the negative real line.

As all stable distributions can be parametrised by $\alpha$, $\sigma$, $H$, and $\mu$, one is interested in how to sample such a distribution with those given parameters. Two approaches of sampling are proposed and compared in the paper by Davydov and Nagaev~\cite{DN02}. The first one is based on an approximation on its spectral measure, while the second one is based on the central limit theorem (also see~\cite{BNR93} for a similar approach). In the second approach they use a sum of a large number of independent and identical heavy-tailed distributions (to be specific a Pareto-type distribution is chosen) as the approximation to stable distributions. The efficiency of this method is presented as two rate-of-convergence results of the generalised central limit theorem.

In a previous work of the authors~\cite{KZ21}, the stable distributions for unitarily invariant Hermitian random matrices are introduced. They are special cases of multivariate stable distributions, with the extra invariance condition that when applying a unitary similarity transformation the distribution is unchanged. Such a distribution is called stable invariant ensemble. A classification result of stable invariant ensembles has been also given, which will be restated in Sec.~\ref{s.pre}.

In the present work, we aim at giving an efficient way of sampling those stable invariant ensembles. It is natural to use both of the Davydov-Nagaev methods to do the sampling. However, we found that one can furthermore utilize the invariance properties of those ensembles to improve the second sampling method. In Sec.~\ref{s.pre} we present improved versions of~\cite[Thm 3.2 \& Thm 3.3]{DN02}. 

The rest of our work is structured as follows. In Sec.~\ref{s.pre} we will firstly re-state the classification result of stable invariant ensembles introduced in~\cite{KZ21}. Then we present three main results: Theorem~\ref{trace_traceless} states the regularity of stable invariant ensembles; Theorem~\ref{thm2} suggests an approximation and states its rate of convergence in terms of the supremum norm; and Corollary~\ref{col3} states the rate of convergence of the total variation distance. After notations used in this paper are introduced in Sec.~\ref{s.notations}, in Secs.~\ref{sec:reg},~\ref{s.proof}, and~\ref{s.proof3} all the proofs for Theorem~\ref{trace_traceless} and Theorem~\ref{thm2} are presented. We conclude our discussions in Sec.~\ref{sec:conclusio} where Corollary~\ref{col3} will be given.

\subsection{Preliminaries and results}\label{s.pre}

We denote the set of $N\times N$ Hermitian matrices by $\Herm(N)$; we may also identify it with $\R^{N^2}$. Moreover, $\U(N)$ denotes the unitary group. We recall that a distribution for the random matrix $X\in\Herm(N)$ is stable if there is a $B>0$ and an $A\in\Herm(N)$ such that the sum of two independent copies $X_1$ and $X_2$ of $X$ is equal in distribution like
\begin{equation}\label{stab.cond}
X\stackrel{\dv}{=}\frac{1}{B}(X_1+X_2-A),
\end{equation} 
where $\stackrel{\dv}{=}$ denotes that both sides share the same distribution.
Additionally, we say that a stable distribution on $\Herm(N)$ is unitarily invariant if
\begin{equation}\label{uni.inv}
	X\stackrel{\dv}{=}UXU^\dagger, \forall\, U\in\U(N)
\end{equation}
with the Hermitian adjoint $U^\dagger$ of $U$. It can be fully classified by the same set of four parameters as for random vectors, with the subtle differences that
\begin{enumerate}[(1)]
	\item the spectral measure $H$  is now defined on the unit sphere $\mathbb S_{\Herm(N)}\subset\Herm(N)$ satisfying $H(\dv X)=H(U\dv XU^\dagger)$ for all $U\in\U(N)$, and
	\item the shift parameter $\mu\in\R$  is only a real number.
\end{enumerate}
We will denote such a stable invariant ensembles by $S(\alpha,\sigma H,\mu)$, analogously to the multivariate case, and the corresponding characteristic function is
\begin{equation}\label{char.stable.mat}
\mathbb E\exp(i\tr SX)=\displaystyle
\exp\left(-\sigma\int_{\mathbb S_{\Herm(N)}}H(\dv R) \nu_\alpha(\tr RS)+i\mu\tr S\right).
\end{equation}
The reason why the shift $\mu$ is only a real scalar and not a matrix itself is the unitarily invariance. This invariance does not only imply the unitarily invariance of the spectral measure $H$ but also forces us to have an invariant for the linear shift $\tr (\mu S)$ in the logarithm of the characteristic function. The only linear term that satisfies $\tr (\mu S)=\tr (\mu USU^\dagger)$ or all $U\in\U(N)$ and $S\in\Herm(N)$ is a multiple of the identity matrix because of Schur's lemma.

Our first result is a statement about the regularity of the distribution of any stable invariant ensemble. It is proven to the end of Sec.~\ref{sec:reg}. Let us underline that we exclude the Dirac measure at a specific Hermitian matrix as a stable distribution in the ensuing sections as the conclusions for this case are trivial.

\begin{theorem}[Regularity Property]\label{trace_traceless}
	Let $X\in\Herm(N)$ be a non-zero random matrix drawn from an invariant stable ensemble with probability distribution $F$. Then exactly one of the following three statements is true:
	\begin{enumerate}
		\item $F$ is a stable distribution on an $N^2$-dimensional subspace of $\Herm(N)$, and it is bounded and continuous on $\Herm(N)$;
		\item  There is a $t\in \mathbb{R}$ such that $F$ is a stable distribution with a support which is restricted on the subspace of $\Herm(N)$ of fixed trace Hermitian matrices
		\begin{equation}
		\Herm_t(N):=\left\{X\in\Herm(N): \tr X=t\right\},
		\end{equation}
		and it is bounded and continuous thereon;
		\item $F$ is a stable distribution with a support restricted to the $1$-dimensional subspace $\{xI_N:x\in\mathbb{R}\}\subset\Herm(N)$, while the traceless part of the random variable $X\in\Herm(N)$ always vanishes.
	\end{enumerate}
\end{theorem}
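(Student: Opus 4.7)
The plan is to analyze the characteristic function~\eqref{char.stable.mat} in two separate passes. First, I will identify the minimal affine subspace on which $F$ is supported, by exploiting the $\U(N)$-invariance of $H$ together with the representation theory of $\U(N)$ acting by conjugation on $\Herm(N)$. Second, I will establish boundedness and continuity on that subspace via $L^1$-integrability of the characteristic function and Fourier inversion.

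For the support step, from~\eqref{nualpha} one checks that $\re\nu_\alpha(u)=|u|^\alpha$ uniformly in $\alpha\in(0,2]$, so the modulus of the characteristic function reads
\[
\left|\mathbb E\exp(i\tr SX)\right|=\exp\!\left(-\sigma\int_{\mathbb S_{\Herm(N)}}H(\dv R)\,|\tr RS|^\alpha\right).
\]
Let $V\subseteq\Herm(N)$ denote the real-linear span of $\mathrm{supp}\,H$. If $S$ lies in the trace-orthogonal complement $V^\perp$, then the characteristic function has modulus one in that direction, so $F$ is concentrated on the affine subspace $\mu I_N+V$. The $\U(N)$-invariance of $H$ forces $V$ to be stable under conjugation by $\U(N)$, and the decomposition $\Herm(N)=\R I_N\oplus\Herm_0(N)$ is into real $\U(N)$-irreducibles (its complexification being the irreducible adjoint representation $\mathfrak{sl}(N,\C)$ of $\SU(N)$ when $N\geq 2$). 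Hence $V$ must be one of $\{0\}$, $\R I_N$, $\Herm_0(N)$, or $\Herm(N)$; discarding the Dirac case $V=\{0\}$, the three surviving options yield conclusions (3), (2) with $t=\mu N$, and (1) respectively.

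For the regularity step, I aim to show that the characteristic function restricted to directions in $V$ lies in $L^1(V)$, so that Fourier inversion on the finite-dimensional subspace $V$ yields a bounded continuous density on $\mu I_N+V$. On the compact unit sphere of $V$, the continuous function $S\mapsto\int H(\dv R)\,|\tr RS|^\alpha$ is strictly positive: if it vanished at some $S_0$, then $\tr RS_0=0$ for all $R\in\mathrm{supp}\,H$, hence for all $R\in V$ by linearity, forcing $\tr S_0^2=0$ and thus $S_0=0$. Compactness together with $\alpha$-homogeneity then delivers a constant $c>0$ with $\int H(\dv R)\,|\tr RS|^\alpha\geq c\|S\|^\alpha$ for all $S\in V$, yielding the integrable majorant $\exp(-\sigma c\|S\|^\alpha)$. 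Case (3) falls back on the classical fact that a non-Dirac one-dimensional stable law has a bounded continuous density.

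The step I expect to require most care is the representation-theoretic classification of $\U(N)$-invariant real subspaces of $\Herm(N)$, which rests on real-irreducibility of $\Herm_0(N)$ and needs a brief verification (for instance via Schur's lemma applied to the complexified adjoint action of $\SU(N)$, with the trivial case $N=1$ collapsing directly into conclusion~(3)). A secondary subtlety is converting the majorant bound on $V$ into the Fourier-inversion statement on the affine subspace $\mu I_N+V$, but this amounts to the standard finite-dimensional inversion theorem once a Lebesgue reference measure on $V$ is fixed.
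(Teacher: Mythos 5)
Your proposal is correct and follows essentially the same strategy as the paper: classify the $\U(N)$-invariant subspaces of $\Herm(N)$ by representation theory to pin down the span $V$ of the spectral measure's support, obtain an integrable majorant $\exp(-\sigma c\,\|S\|^\alpha)$ for the characteristic function on $V$ via a compactness argument on the unit sphere, and conclude boundedness and continuity of the density by Fourier inversion on $V$. The paper routes the concentration-on-an-affine-subspace step through a general change-of-basis factorization lemma (Lemma~\ref{multi_marginal}) and a general regularity lemma (Lemma~\ref{regularity_multi_stable}) for multivariate stable laws, whereas you read both off directly from the characteristic function, deriving the deterministic directions from the modulus-one property on $V^{\perp}$; the two arguments are equivalent, and your version of the support step is if anything a little cleaner than the paper's appeal to $\mathrm{span}(\mathrm{supp}\,F)$, which when $\mu\neq0$ and $V=\Herm_0(N)$ would not literally coincide with $V$.
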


The result is rather intuitive from a representation theoretic perspective. The two irreducible representations of the conjugate action of $\U(N)$ on $\Herm(N)$ is the traceless part which is equal to the fundamental representation of the Lie algebra ${\rm SU}(N)$ apart from a global factor of $i$ and the trivial representation given by the multiplicative of the identity matrix $I_N$. Certainly the two components may have independently a different stability exponent $\alpha$. Yet, the stability condition~\eqref{stab.cond} essentially implies that only one of the two components can be probabilistic when they do not share the same $\alpha$.

Once we have proven Theorem~\ref{trace_traceless}, we will only consider the first two cases in the ensuing discussion where $X$ is not proportional to the identity matrix. As every fixed trace matrix can be shifted by a multiple of the identity matrix to a traceless matrix, we also restrict ourselves to $\Herm_0(N)$ without loss of generality. 

To develop the second main theorem of this paper, we employ the following notations.
\begin{remark}[Notations and Assumptions]\label{rem:notation}\
\begin{enumerate}
\item	First of all we assume $N>1$, since $N=1$ corresponds to the univariate case which is well understood.
\item	For convenience let us use a unified notation $W$ for either $\Herm(N)$ of $\Herm_0(N)$ in the above first two cases.
\item The spectral measure $H$ is distributed on $\mathbb S_W$, the unit sphere of $\Herm(N)$; we furthermore assume it is a \textit{Borel measure} with respect to the standard topology  on $\mathbb S_W$ induced by the Euclidean norm in $\Herm(N)$.
\item  The eigenvalue part of $H\in\Herm(N)$ is denoted by $H_{\eig}$.
\item	Let $X_j=\diag(\widetilde{X}_j)$ be diagonal matrices with entries given by the vector entries of $\widetilde{X}_j\in\mathbb{R}^N$ which are independent and identical copies of a random vector drawn from $S(\alpha,H_\eig,0)$ and let $Y\in\Herm(N)$ follow $S(\alpha,H,0)$. Note that the former is an $N$-dimensional stable distribution on the real diagonal matrices whose set can be identified with $\mathbb{R}^N$ while the latter is a stable invariant ensemble in $\Herm(N)$ which should not be confused. The choices $\mu=0$ and $\sigma=1$ are not restrictions of generality as $\mu$ only shifts the random matrix $Y$ and the diagonal random matrices $X_j$ by the fixed matrix $\mu I_N$ and $\sigma$ only rescales those.
\item	Let  $U_j\in\U(N)$ be independently and identically distributed copies of unitary matrices that should be Haar distributed.
\item	We assume that all $X_j$ and $U_j$ are also statistically independent.
\end{enumerate}
\end{remark}
These notations and assumptions will be used throughout the present work such as in our second main result.

\begin{theorem}[Rate of convergence]\label{thm2}
	With the notations and assumptions in Remark~\ref{rem:notation}, we denote by $p_m$ the probability density function of the standardised sum 
	\begin{equation}
	Y_m:=\frac{1}{m^{1/\alpha}}\sum_{j=1}^mU_jX_jU_j^\dagger-\frac{2t_0\log m}{\pi}I_N\ind_{\alpha=1}
	\end{equation}
	with $t_0:=\frac{1}{N}\int_{\mathbb S^{N-1}}H_\eig(\dv t)\tr (t)$, $I_N$ being the $N\times N$ identity matrix, and $\ind_{\alpha=1}$ being the indicator function that is $1$ when $\alpha=1$ and zero otherwise. Furthermore, the probability density function of $Y$ is denoted by $p_\infty$, which is distributed according to $S(\alpha,H,0)$. Then,
	\begin{enumerate}
		\item The density $p_1$ is bounded and continuous on $W$ if and only if $p_\infty$ is bounded and continuous on $W$, and this implies that for any $m\in\N$, the distribution of $p_m$ is also bounded and continuous;
		\item there exists a positive constant $C$ such that the bound
		\begin{equation}\label{thm.bound}
		\sup_{X\in W}|p_m(X)-p_\infty(X)|\le \frac{C}{m}
		\end{equation}
		holds for any $m\geq1$;
		\item furthermore the rate $O(m^{-1})$ for the dominating term is optimal for all $\alpha\in(0,2]$, in the set of all spectral measures $H_{\rm eig}$ on $\mathbb{R}^N$.
	\end{enumerate}
\end{theorem}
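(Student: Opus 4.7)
My approach is based on Fourier inversion applied to the characteristic functions of $Y_m$ and $Y$. By the independence assumptions in Remark~\ref{rem:notation} together with~\eqref{char.stable.vec}, a direct conditioning on $U_1$ (and using that $X_1$ is diagonal) gives
\begin{equation*}
\varphi_{T_1}(S) = \int_{\U(N)}\exp\Bigl(-\int_{\mathbb{S}^{N-1}}H_\eig(\dv r)\,\nu_\alpha\bigl(r^\top\xi(U,S)\bigr)\Bigr)\,\dv U, \qquad \xi_j(U,S):=(U^\dagger S U)_{jj},
\end{equation*}
where $T_1:=U_1X_1U_1^\dagger$. Since the $U_jX_jU_j^\dagger$ are i.i.d., the characteristic function of $Y_m$ factorises as
\begin{equation*}
\varphi_{Y_m}(S) = \bigl[\varphi_{T_1}(S/m^{1/\alpha})\bigr]^m \exp\Bigl(-\tfrac{2it_0\log m}{\pi}\tr S\cdot\ind_{\alpha=1}\Bigr).
\end{equation*}
Combined with~\eqref{char.stable.mat}, this makes the $m$-dependence explicit and is the common starting point for all three parts.

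For part~(1), I would show that $p_\infty$ is bounded and continuous iff $\varphi_Y\in L^1(W)$ via Fourier inversion, and analogously for $p_1$. Equivalence of the two integrability conditions follows because both $|\varphi_Y(S)|$ and $|\varphi_{T_1}(S)|$ are governed by spherical integrals against the same pushforward measure $H_\eig$ (depending only on the eigenvalues of $S$), so their decay at infinity matches up to a bounded multiplicative factor. Once $\varphi_{T_1}(\cdot/m^{1/\alpha})\in L^1$ on the rescaled domain, the pointwise identity $|\varphi_{Y_m}|=|\varphi_{T_1}(\cdot/m^{1/\alpha})|^m$ shows that $\varphi_{Y_m}\in L^1$ for every $m\geq1$, hence $p_m$ is bounded and continuous by inversion.

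For part~(2), Fourier inversion gives
\begin{equation*}
\sup_{X\in W}|p_m(X)-p_\infty(X)| \leq \frac{1}{(2\pi)^{\dim W}}\int_W|\varphi_{Y_m}(S)-\varphi_Y(S)|\,\dv S.
\end{equation*}
I would split the integration domain at $\|S\|=m^\varepsilon$ for a small $\varepsilon>0$. On the inner region, a Taylor expansion of the integrand in the formula for $\varphi_{T_1}$, combined with the homogeneity $\nu_\alpha(tu)=|t|^\alpha\nu_\alpha(u)$ (and its $\alpha=1$ analogue with a logarithmic shift), yields
\begin{equation*}
m\log\varphi_{T_1}(S/m^{1/\alpha}) + \tfrac{2it_0\log m}{\pi}\tr S\cdot\ind_{\alpha=1} = \log\varphi_Y(S) + \frac{R(S)}{m} + o(m^{-1}),
\end{equation*}
where $R$ arises from the second cumulant of $\varphi_{T_1}$ and is polynomially bounded in the entries of $S$; this refines the univariate expansion in~\cite{DN02} by exploiting the unitary averaging already present in $\varphi_{T_1}$. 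Exponentiating and subtracting $\varphi_Y(S)$ yields $|\varphi_{Y_m}(S)-\varphi_Y(S)|\le Cm^{-1}|\varphi_Y(S)|(1+\|S\|^p)$, which integrates to $O(m^{-1})$ by the $L^1$ decay from part~(1). On the outer region $\|S\|\geq m^\varepsilon$, both $|\varphi_{Y_m}|$ and $|\varphi_Y|$ are integrable with tails that, after integration, are $o(m^{-1})$. For part~(3), it suffices to exhibit a single spectral measure---for instance, in $N=2$, a symmetric two-atom measure on $\mathbb{S}^1$---for which one can compute $R(S)$ explicitly and show that its inverse Fourier transform is not identically zero, forcing the matching lower bound $\Omega(m^{-1})$.

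The principal technical obstacle is the case $\alpha=1$. There $\nu_1$ contains the factor $\log(-iu)$, so the naive Taylor expansion of $m\log\varphi_{T_1}(S/m^{1/\alpha})$ produces a spurious $\log m$ contribution that must be cancelled precisely by the deterministic shift $-(2t_0\log m/\pi)I_N$ built into $Y_m$. Carrying out this cancellation uniformly in $S$ on the inner region, with the remainder controlled at order $m^{-1}$ independently of the spectral measure, is the most delicate step of the argument.
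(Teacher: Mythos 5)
Your overall strategy tracks the paper's closely: Fourier inversion, a split of the frequency integral at radius $m^{\varepsilon}$, Taylor expansion of $m\log\varphi_{T_1}(S/m^{1/\alpha})$ with the spurious $\log m$ term at $\alpha=1$ killed by the deterministic shift, and a test-function argument to detect the $1/m$ term for part~(3). Parts~(1) and (2) as you sketch them are sound and match the paper's decomposition; the paper proves part~(1) by relating $\mathrm{span}(\mathrm{supp}\,H)=W$ to $\mathrm{span}(\mathrm{supp}\,H_{\eig})\in\{\R^N,\R^N_0\}$ and then invoking a regularity lemma for stable vectors, which is a slightly sharper version of your "spherical integrals against the same pushforward measure" heuristic.

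The gap is in part~(3). You propose a \emph{symmetric} two-atom eigenvalue measure, but that choice fails precisely at $\alpha=1$, which you yourself identify as the delicate case. With $H^{(t)}_\eig=\frac{1}{2N}\sum_j[t\,\delta_{e^{(j)}}+(1-t)\delta_{-e^{(j)}}]$ evaluated at $S_0=\diag(1,0,\ldots,0)$ one finds
\begin{equation*}
w_1(US_0U^\dagger)=\frac{1}{N}\Bigl(1+\frac{4i}{\pi}(2t-1)\sum_{j=1}^N|u_{j,1}|^2\log|u_{j,1}|\Bigr),
\end{equation*}
so at $t=1/2$ this is the deterministic constant $1/N$, the variance $v(S_0)=\langle w_1^2\rangle-\langle w_1\rangle^2$ vanishes identically in $S$ on the relevant orbit, and the would-be leading $1/m$ coefficient $R(S)$ is zero. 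The symmetric case is harmless for $\alpha\neq1$ because there $w_\alpha(US_0U^\dagger)$ is a non-constant real multiple of $\sum_j|u_{j,1}|^{2\alpha}$, but it is exactly at $\alpha=1$ that the real part becomes deterministic and a non-symmetric weight $t\neq1/2$ is needed to push the randomness into the imaginary part via $\sum_j|u_{j,1}|^2\log|u_{j,1}|$. This is why the paper carries the parameter $t$: a single fixed symmetric choice cannot handle all $\alpha\in(0,2]$. A second, smaller point: your claim that showing the inverse Fourier transform of the coefficient is "not identically zero" suffices should be made quantitative (e.g.\ by pairing with a fixed $L^1\cap L^\infty$ test function whose Fourier transform concentrates on a neighbourhood of the orbit of $S_0$, as the paper does with $\varphi_N^{(\varepsilon)}$) to turn the non-vanishing of the $1/m$ coefficient into the lower bound $\sup_X|p_m-p_\infty|\geq c/m$ without a priori smoothness of the remainder.
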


\begin{remark}\ 
	\begin{enumerate}
		\item The third part about optimality means explicitly that for every sequence of positive numbers $\{a_m\}_{m\in\mathbb{N}}\subset\mathbb{R}_+$ satisfying the bound $\sup_{X\in W}|p_m(X)-p_\infty(X)|\le a_m$ for all $m\in\mathbb{N}$ and spectral measures $H_{\rm eig}$ on $\mathbb{R}^N$ it must be 
		\begin{equation}
			\lim_{m\to\infty}\frac{1}{a_m m}<\infty.
		\end{equation}
		The limit actually implies that $a_m$ does not converge faster to $0$ than the sequence $\{1/m\}_{m\in\mathbb{N}}$.
		
		\item We utlise a similar technique to~\cite{DN02} on estimating the integral for proving statement (2). Compared with the rate of convergence $O(m^{-\min(1,2/\alpha-1)})$ obtained in~\cite{DN02}, our rate is the same when $\alpha\le 1$ and sharper when $\alpha>1$.  In addition, the results in~\cite{DN02} were only shown for symmetric distributions, while we believe that this restriction can be removed when using some of the techniques in the present work.
		
		\item The novelties of this theorem lie in parts (1) and (3). For part (1) one has to investigate the regularities of the matrix versions of heavy-tailed distributions. For part (3), a standard strategy which is also adopted in~\cite{DN02} is to find a specific distribution whose rate of conergence can be exactly computed, in order to obtain a lower bound of the distance. However, it is not easy to find such a distribution in our case.  In the proof of statement (3) we instead find a way to get around it by giving a lower bound of the exact rate for the specifically chosen distirbution.
		
		\item We would like to point out here that the constant $C$, which is encountered in our proof, is far from being optimal, the rate $1/m$ is though. Moreover, we are certain that the next term in a large $m$ expansion can be improved to $O(m^{-2})$ with the same strategy applied in~\cite{DN02} and in our work. 
	\end{enumerate}
\end{remark}

Part (1) is proven at the end of Sec.~\ref{sec:reg}. The proof of part (2) is given in Sec.~\ref{s.proof} and the proof of part (3) about the optimality of the rate of convergence is given in Sec.~\ref{s.proof3}.

\subsection{Some notations}\label{s.notations}

In the ensuing sections we also make use of several specific notations, which are all presented here for the readers' convenience. 

For a function $f:\U(N)\mapsto \C$ defined on the unitary group,
we use the average notation
\begin{equation}
	\la f(U)\ra:=\int_{\U(N)}\mu_{\rm Haar}(\dv U)\, f(U)
\end{equation}
for the integral over the unitary group with respect to the normalised Haar measure $\mu_{\rm Haar}(\dv U)$. This allows to easily distinguish whether we take the expectation value over the random matrices $X_j$ or over the unitary matrices $U_j$.

For a vector $r=(r_1,\ldots,r_N)$, we introduce the notation $\diag(r)$ for a diagonal matrix with the diagonal entries being $r_1,\ldots,r_N$. Given a spectral measure $H$ with eigenvalue part $H_\eig$, we define the function $w_\alpha:\Herm(N)\mapsto \C$ by
\begin{equation}\label{def.w-alpha}
w_\alpha(X):=\int_{\mathbb S^{N-1}}H_\eig(\dv r) \nu_\alpha(\tr \diag(r)X)
\end{equation}
with $\nu_\alpha$ as in~\eqref{nualpha}, which will have a regular appearance in our computations.
Another important quantity is the variance of $w_\alpha(USU^\dagger)$ with respect to the unitary matrix $U\in\U(N)$  and a fixed matrix $S\in W$, i.e.,
\begin{equation}\label{def.variance}
	v(S):=\la w_\alpha(USU^\dagger)^2\ra-\la w_\alpha(USU^\dagger)\ra^2.
\end{equation}
This quantity relates to the constant $C$ appearing in~\eqref{thm.bound}. We would like to point out that $v(S)$ can be complex in general, see~\eqref{nualpha}. Also associated with the quantity $w_\alpha$ are the two constants
\begin{equation}
b_H:=\inf_{\tr S^2=1}\la\re w_\alpha(USU^\dagger)\ra \quad{\rm and}\quad c_H:=\exp(-b_H/m).
\end{equation}
Both constants are bounded in $S\in\mathbb{S}_{\Herm(N)}$ as we will see.
They will be introduced again in Lemma~\ref{walpha} and used throughout the proof of Theorem~\ref{thm2}.

As we will use the harmonic analysis approach in our proofs, we summarised some conventions here. For an $L^{1}(W)$ function $f$ we denote the Fourier transform of $f$ by
\begin{equation}
	\hat f(S):=\int_{W}\dv X\,f(X)e^{i\tr XS}.
\end{equation}
In the case $W=\Herm(N)$, $\dv X$ is the product of differentials of the Lebesgue measure of independent entries,
\begin{equation}
	\dv X=\prod_{j=1}^N\dv x_{j,j}\prod_{j<k}\dv x_{j,k}^{(R)}\dv x_{j,k}^{(I)},
\end{equation}
while in the case $W=\Herm_0(N)$, $\dv X$ is the induced measure from the previous one on $\Herm(N)$ by restricting the trace of $X$ to zero. For a random variable $X$ with a probability density function $f$ defined on $W$, we denote the characteristic function  by
\begin{equation}
	\mathbb E_X\exp(i\tr XS)=\hat f(X),
\end{equation}
as well. The following lemma states what the characteristic functions of the two distributions $p_m$ and $p_\infty$ in Theorem~\ref{thm2} are.

\begin{lemma}

With $p_m$ and $p_\infty$ defined in Theorem~\ref{thm2}, in both $\alpha\ne 1$ and $\alpha=1$ cases one has
\begin{equation}
\hat p_m(S)=\la\exp\left(-\frac{w_\alpha(USU^\dagger)}{m}\right)\ra^m\quad{\rm and}\quad \hat p_\infty(S)=\exp\la-w_\alpha(USU^\dagger)\ra.
\end{equation} 

\end{lemma}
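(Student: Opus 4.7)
The plan is to compute each characteristic function directly from the definitions of $Y$ and $Y_m$, reducing the invariant-matrix picture to the diagonal/vector picture via Haar integration.

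For $\hat p_\infty$, I would apply the characteristic-function formula \eqref{char.stable.mat} with $\sigma=1$ and $\mu=0$. Since the spectral measure $H$ on $\mathbb S_{\Herm(N)}$ is unitarily invariant, the spectral decomposition $R=V\diag(r)V^\dagger$ allows one to write integration against $H$ as integration of the eigenvalue part $H_{\eig}$ on $\mathbb S^{N-1}$ combined with Haar integration over $V\in\U(N)$. Applying this to the integrand $\nu_\alpha(\tr RS)$, using cyclicity $\tr(V\diag(r)V^\dagger S)=\tr(\diag(r)V^\dagger SV)$, and invariance of the Haar measure under $U\mapsto U^\dagger$, one recognises exactly $\langle w_\alpha(USU^\dagger)\rangle$ in the exponent.

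For $\hat p_m$, I would first use the mutual independence of the pairs $(U_j,X_j)$ to factor the characteristic function into an $m$-th power and a deterministic shift factor coming from the correction term $-\tfrac{2t_0\log m}{\pi}I_N\mathbf 1_{\alpha=1}$. For a single factor, after fixing $U$ and applying cyclicity, $\tr(SUXU^\dagger)=\tr((U^\dagger SU)X)=\widetilde X^\top\diag(U^\dagger SU)$, so the expectation over the stable random vector $\widetilde X$ is computed via \eqref{char.stable.vec}, producing
\[
\exp\Bigl(-\!\int_{\mathbb S^{N-1}}H_{\eig}(\dv r)\,\nu_\alpha\bigl(m^{-1/\alpha}r^\top\diag(U^\dagger SU)\bigr)\Bigr).
\]
Then I would use the homogeneity of $\nu_\alpha$ from \eqref{nualpha}. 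For $\alpha\neq 1$, $\nu_\alpha(\lambda u)=\lambda^\alpha\nu_\alpha(u)$ for $\lambda>0$, so the prefactor becomes $1/m$ and the exponent reduces immediately to $-w_\alpha(U^\dagger SU)/m$; raising to the $m$-th power and using Haar invariance $U\leftrightarrow U^\dagger$ yields the claim.

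The delicate step is $\alpha=1$, which I expect to be the main obstacle. Here $\nu_1(u/m)=\nu_1(u)/m-(u/m)(2i/\pi)\log m$ produces an additional linear term $\tfrac{2i\log m}{m\pi}(\int r\,H_{\eig}(\dv r))^\top\diag(U^\dagger SU)$ inside the exponent. The key observation is that unitary invariance of $H$ forces $H_{\eig}$ to be symmetric under permutations of the coordinates of $\mathbb R^N$, so $\int r\,H_{\eig}(\dv r)=t_0(1,\dots,1)^\top$ with $t_0$ as in Theorem~\ref{thm2}. Consequently this extra term equals $\tfrac{2it_0\log m}{m\pi}\tr(U^\dagger SU)=\tfrac{2it_0\log m}{m\pi}\tr S$, a $U$-independent constant. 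Pulling it out of the Haar average and raising to the $m$-th power gives a factor $\exp(i\tfrac{2t_0\log m}{\pi}\tr S)$, which is cancelled exactly by the characteristic-function factor from the deterministic shift $-\tfrac{2t_0\log m}{\pi}I_N$ in the definition of $Y_m$. What remains is $\langle\exp(-w_1(U^\dagger SU)/m)\rangle^m$, and Haar invariance finishes the proof.
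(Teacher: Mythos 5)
Your proposal is correct and follows essentially the same route as the paper: you apply the characteristic-function formula \eqref{char.stable.mat} together with the factorisation of the invariant spectral measure into $H_{\eig}\times$Haar for $\hat p_\infty$, and for $\hat p_m$ you factor by independence, use cyclicity of the trace, invoke the positive homogeneity of $\nu_\alpha$ for $\alpha\ne1$, and for $\alpha=1$ expand $\nu_1(u/m)=\nu_1(u)/m-\tfrac{2i}{\pi}\tfrac{\log m}{m}u$, with the linear term reduced via permutation-symmetry of $H_{\eig}$ to a multiple of $\tr S$ that cancels against the deterministic shift. The only cosmetic differences are that you decompose $R=V\diag(r)V^\dagger$ rather than $U^\dagger\diag(r)U$ (requiring one extra use of Haar invariance $V\leftrightarrow V^\dagger$), and you state the linear correction in vector notation $(\int r\,H_{\eig})^\top\diag(U^\dagger SU)$ rather than trace notation; both are equivalent to the paper's steps.
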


\begin{proof}
	The expression for $\hat p_\infty$ can be obtained from the characteristic function of a stable multivariate distribution, see~\eqref{char.stable.mat} for $\mu=0$. To find the expression in our claim we note that
	\begin{equation}
	\int_{\mathbb S_{\Herm(N)}}H(\dv R) \nu_\alpha(\tr RS)=\left\langle\int_{\mathbb S^{N-1}}H_\eig(\dv r) \nu_\alpha(\tr U^\dagger\diag(r)US)\right\rangle=\la w_\alpha(USU^\dagger)\ra
	\end{equation}
	which follows from the eigenvalue decomposition $R=U^\dagger\diag(r)U$. Since the spectral measure is also unitarily invariant the  Haar measure with respect to $U$ factorises from the one of the measure $H_\eig(\dv r)$ for the eigenvalues.
	
	Regarding the equation for $\hat p_m$ we note that
	\begin{equation}
	\begin{split}
	&\mathbb E\exp\left(im^{-1/\alpha}\sum_{j=1}^N\tr U_jX_jU_j^\dagger S-i\frac{2t_0\log m}{\pi}\tr S\,\ind_{\alpha=1}\right)\\
	=&\left(\prod_{j=1}^m\la\mathbb E_{X_j}\exp\left(i{m^{-1/\alpha}}\tr U_jX_jU_j^\dagger S\right)\ra\right)\exp\left(-i\frac{2t_0\log m}{\pi}\tr S\,\ind_{\alpha=1}\right)\\
	=&\la\exp \left(-w_\alpha(m^{-1/\alpha}U^\dagger SU)\right)\ra^m\exp\left(-i\frac{2t_0\log m}{\pi}\tr S\,\ind_{\alpha=1}\right).
	\end{split}
	\end{equation}
	In the $\alpha\ne 1$ case, one has $w_\alpha(m^{-1/\alpha}U^\dagger SU)=m^{-1}w_\alpha(U^\dagger SU)$ which reclaims the result. When $\alpha=1$, one has instead
	\begin{equation}
		\nu_1(m^{-1}\tr \diag(r)U^\dagger SU)=m^{-1}\nu_1(\tr \diag(r)U^\dagger SU)-\frac{2i}{\pi}\frac{\log m}{m}\,\tr \diag(r)U^\dagger SU.
	\end{equation}
	Taking the $\dv r$ integral with the measure $H_\eig$ on both sides and noticing that
	\begin{equation}
		\int_{\mathbb S_W}H_\eig(\dv r)\tr \diag(r)U_j^\dagger SU_j=t_0\tr S
	\end{equation}
	due to the permutation symmetry of the measure $H_\eig$, we have
	\begin{equation}
		w_1(m^{-1}\tr \diag(r)U_j^\dagger SU_j)=m^{-1}w_1(\tr \diag(r)U_j^\dagger SU_j)-i\frac{2t_0}{\pi}\frac{\log m}{m}\,\tr S.
	\end{equation}
	The additional shift cancels with the additional exponential function which only exists for $\alpha=1$.
\end{proof}

In this lemma we notice that the quantity
\begin{equation}
	\hat r_m(S):=\la\exp\left(-\frac{w_\alpha(USU^\dagger)}{m}\right)\ra
\end{equation}
will become important because of $\hat r_m^m(S)=\hat p_m(S)$. It is essentially the characteristic function of the rescaled probability density $p_1$. Another quantity relating to the difference between those two characteristic function is
\begin{equation}\label{D0}
D:=\log \frac{\hat p_m(S)}{\hat p_\infty(S)}=m\log\hat r_m(S)+\la w_\alpha(USU^\dagger)\ra,
\end{equation}
which will be also exploited in the proofs.

\section{Regularities of the stable distributions}\label{sec:reg}

Recall that the support of a measure $h$, denoted by $\mathrm{supp}(h)$, is defined as the set of all points, for which any open neighbourhood of those points has a positive measure. 

In a first step to establish the regularity statement of Theorem~\ref{trace_traceless}, we give a general lemma saying that every stable random vector can be decomposed into a linear combination of a fixed deterministic vector and a random stable vector. The dimension of the latter relates to the support of the measure. Indeed, the situation in Theorem~\ref{trace_traceless} is then only a very special case of this case. Despite we could not find a proof of this lemma or any equivalent version of it, although we strongly believe that it exists in some literature, we will prove it here for completeness.

\begin{lemma}\label{multi_marginal}
	Let $X=(X_1,\ldots,X_d)^\top\in\mathbb{R}^d$ be a $d$-dimensional stable random vector following $S(\alpha,\sigma h,\mu)$. If $\mathrm{span}(\mathrm{supp}(h))$ is a $p$-dimensional real vector space with $p\le d$, there exists an invertible linear transformation $\Lambda:\R^d\mapsto\R^d$, such that the last $(d-p)$ entries of the random vector $\Lambda X$ are fixed.
\end{lemma}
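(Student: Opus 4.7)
The plan is to use the characteristic function formula \eqref{char.stable.vec} to show that the component of $X$ orthogonal to $V := \mathrm{span}(\mathrm{supp}(h))$ is almost surely constant, and then to realise $\Lambda$ as a change-of-basis matrix adapted to the orthogonal decomposition $\mathbb{R}^d = V \oplus V^\perp$. The essence is that the randomness of a stable vector is entirely captured by its spectral measure, so along any direction to which the spectral measure is blind, the vector must be deterministic.

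First I would observe that for any $\xi \in V^\perp$ and any $r \in \mathrm{supp}(h) \subset V$, the inner product $r^\top\xi$ vanishes. Since $\nu_\alpha(0)=0$ for every $\alpha\in(0,2]$ (in the borderline case $\alpha=1$ this follows from $\lim_{u\to 0} u\log(-iu)=0$), the integrand $\nu_\alpha(r^\top\xi)$ in \eqref{char.stable.vec} is $h$-almost everywhere zero on $\mathbb{S}^{d-1}$, and therefore
$$\mathbb E \exp(i\xi^\top X)=\exp(i\xi^\top\mu)\quad\text{for every }\xi\in V^\perp.$$
Picking any basis $\{f_1,\ldots,f_{d-p}\}$ of $V^\perp$ and applying this identity to $\xi=\sum_j t_j f_j$ for arbitrary $t_1,\ldots,t_{d-p}\in\mathbb{R}$, the left-hand side is the joint characteristic function of the random vector $(f_1^\top X,\ldots,f_{d-p}^\top X)$, while the right-hand side is the characteristic function of the deterministic vector $(f_1^\top\mu,\ldots,f_{d-p}^\top\mu)$. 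Uniqueness of characteristic functions forces the two distributions to coincide, so $f_j^\top X=f_j^\top\mu$ almost surely for each $j$.

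To conclude, I would pick any basis $\{e_1,\ldots,e_p\}$ of $V$ and define $\Lambda$ to be the $d\times d$ matrix whose rows are $e_1^\top,\ldots,e_p^\top,f_1^\top,\ldots,f_{d-p}^\top$. Since these rows form a basis of $\mathbb{R}^d=V\oplus V^\perp$, the matrix $\Lambda$ is invertible, and by construction the last $d-p$ entries of $\Lambda X$ are precisely $f_1^\top X,\ldots,f_{d-p}^\top X$, which are almost surely equal to the fixed scalars $f_j^\top\mu$. The only real subtlety is the case $\alpha=1$, where $\nu_1$ is not simply a power but contains a logarithmic correction; verifying continuity of $\nu_1$ at the origin (and hence $\nu_1(0)=0$) is what makes the $h$-a.e. vanishing of the integrand work uniformly across all $\alpha\in(0,2]$. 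Once this is in hand, the remainder of the argument is the uniqueness theorem for multivariate characteristic functions combined with elementary linear algebra.
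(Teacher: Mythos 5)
Your proof is correct, and it takes a somewhat different (and more economical) route than the paper's. Both proofs rely on the spectral-measure representation~\eqref{char.stable.vec} of the characteristic function and on the orthogonal decomposition $\mathbb{R}^d = V \oplus V^\perp$ with $V = \mathrm{span}(\mathrm{supp}(h))$. The paper changes variables in the characteristic function, extends the spectral measure off the unit sphere to a measure $\tilde h$ on $\mathbb{R}^d$, and explicitly verifies that $\tilde h$ factors as a product of a probability measure supported in $V$ and a Dirac mass on $V^\perp$; this produces an explicit factorization of the characteristic function, which the authors then read off. You instead restrict the characteristic function to $\xi\in V^\perp$, observe that $\nu_\alpha(r^\top\xi)=0$ for all $r\in\mathrm{supp}(h)$ (with the $\alpha=1$ case handled via $\lim_{u\to 0}u\log(-iu)=0$), conclude that $\mathbb E\exp(i\xi^\top X)=\exp(i\xi^\top\mu)$ on $V^\perp$, and invoke uniqueness of multivariate characteristic functions to deduce that the projection of $X$ onto $V^\perp$ is almost surely the constant $\mu|_{V^\perp}$. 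This is cleaner and shorter for the lemma as stated. The trade-off is that the paper's more laborious route also identifies the distribution of the first $p$ components of $\Lambda X$ as a genuine $p$-dimensional stable law $S(\alpha,\sigma\tilde h',(\tilde\mu_1,\ldots,\tilde\mu_p))$, a fact that is reused in the proof of Lemma~\ref{regularity_multi_stable}; if you only prove the a.s.\ constancy of the $V^\perp$-projection, you would need a short additional argument there (though your uniqueness-of-characteristic-functions approach extends easily to supply it). One small point worth making explicit: you should note that $\mathrm{supp}(h)\subset V$ follows because the support of a measure is contained in the span of its support by definition, so that $r^\top\xi=0$ indeed holds for every $r\in\mathrm{supp}(h)$ and $\xi\in V^\perp$.
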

\begin{proof}
	Surely, one can find a set of basis vectors $E=\{e_1,\ldots,e_d\}$ such that $\mathrm{span}(\mathrm{supp}(h))=\mathrm{span}\{e_1,\ldots,e_p\}$ (we suppose they are orthonormal for simplicity). Then we construct $\Lambda^{-1}$ by mapping the canonical basis vector $(0,\ldots,1,\ldots,0)$, who has only the $j$-th entry being $1$ and other entries are $0$, to the basis vector $e_j$, i.e., $\Lambda^{-1}=(e_1,\ldots, e_d)\in{\rm O}(d)$ is orthogonal. Then the characteristic function of $\Lambda X$ reads
	\begin{equation}\label{lemma5.eq}
	\begin{split}
	\mathbb E\exp\left(i\xi^\top \Lambda X\right)&=\exp\left(-\sigma\int_{r\in\mathbb{S}^{d-1}}h(\dv r)\,\nu_\alpha\left(\xi^\top \Lambda r\right)+i\xi^\top\Lambda\mu\right)\\
	&=\exp\left(-\sigma\int_{\tilde r\in\mathbb S^{d-1}} h(\Lambda^{-1}\dv \tilde r)\,\nu_\alpha\left(\xi^\top {\tilde r}\right)+i\xi^\top\tilde\mu\right)
	\end{split}
	\end{equation}
	where we make the changes of variable ${\tilde r}=\Lambda r$ and ${\tilde \mu}=\Lambda \mu$. 
	
	In the next step we extend the spectral measure from the sphere $\mathbb{S}^{d-1}$ to the whole vector space $\mathbb{R}^d$ by the definition
	\begin{equation}
	\int_{\R^d}\tilde h(\dv \tilde r)\varphi(\tilde r):=\int_{\mathbb S^{d-1}}h(\Lambda^{-1}\dv\tilde r)\varphi(\tilde r)
	\end{equation} 
	for any bounded measurable function $\varphi$ on $\R^d$. In other words, the support of $\tilde h$ does not intersect with the complement of the unit sphere. In particular, it also holds true that $\mathrm{supp}(\tilde h)=\Lambda\,\mathrm{supp}(h)=\{(\tilde{r}_1,\ldots,\tilde{r}_d)^\top\in\R^d|\tilde r_{p+1}=\ldots=\tilde r_d=0\}=\mathrm{span}(\Lambda E)$ as a closed set on the unit sphere is also closed in $\R^d$. 
	
	Now we will show that $\tilde h(\dv \tilde r)$ is a product measure of $\tilde h'(\dv \tilde r_1\ldots\dv \tilde r_p)$, which is a probability measure on $\mathrm{span}(\Lambda E)$, and a Dirac measure $\delta(\dv \tilde r_{p+1}\ldots\dv \tilde r_d)$. Explicitly, it means
	\begin{equation}
		\tilde h(\dv \tilde r)=\tilde h'(\dv \tilde r_1\ldots\dv \tilde r_p)\,\delta(\dv \tilde r_{p+1}\ldots\dv \tilde r_d)
	\end{equation}
	and the Dirac measure is given by 
	\begin{equation}
	\int_{\mathbb{R}^{d-p}}\delta(\dv \tilde r_{p+1}\ldots\dv \tilde r_d) \vartheta(r_{p+1},\ldots,r_d)=\vartheta(0,\ldots,0)
	\end{equation}
	for any bounded measurable function $\vartheta$ on $\R^{d-p}$.
	We construct $\tilde h'$ as follows: for $A\subset\R^p$ a measurable set of $\mathrm{span}(\Lambda E)$, we define
	\begin{equation}
		\tilde h'(A):=\tilde h(A\times\mathrm{span}(\Lambda E)^\perp)
	\end{equation}
	where $\mathrm{span}(\Lambda E)^\perp\simeq\R^{d-p}$ is the orthogonal complement of $\mathrm{span}(\Lambda E)$. What we need to show  is that for all $A\in\mathrm{span}(\Lambda E)$ and $B\in\mathrm{span}(\Lambda E)^{\perp}$,
	\begin{equation}
		\tilde h(A\times B)=\tilde h'(A)\ind_B(0)
	\end{equation}
	with $\ind_B(0)$ the indicator function which is only non-vanishing when $0\in B$.
	Thus when $0\not\in B$, the left hand side is zero as $A\times B$ does not intersect $\mathrm{span}(\Lambda E)$, where the right hand side also vanishes. When $0\in B$, one has
	\begin{equation}
		h(A\times B)=h(A\times B)+h(A\times (\mathrm{span}(\Lambda E)^\perp\backslash B))=h(A\times \mathrm{span}(\Lambda E)^\perp)
	\end{equation}
	which gives us the claim. Furthermore, it is also clear that
	\begin{equation}
		\mathrm{supp}(\tilde h')\subset \mathbb{S}^{p-1}
	\end{equation}
	because $\Lambda$ is orthogonal and does not change the normalisation of vectors.
	
	Utilizing this claim, Eq.~\eqref{lemma5.eq} can be rewritten as
	\begin{equation}
		\mathbb E\exp\left(i\xi^\top \Lambda X\right)=\exp\left(-\sigma\int_{(\tilde r_1,\ldots,\tilde r_p)\in\,\mathbb{S}^{p-1}}\tilde h'(\dv \tilde r_1\ldots\dv\tilde r_p)\,\nu_\alpha\left(\sum_{j=1}^p\xi_j\tilde r_j\right)+i\xi\tilde\mu^\top\right)
	\end{equation}
	which is a product of the characteristic function of a stable distribution $S(\alpha,\sigma\tilde h',(\tilde\mu_1,\ldots,\tilde\mu_p))$ and the characteristic function of the fixed vector $(\tilde\mu_{p+1},\ldots,\tilde\mu_d)$. This finishes the proof.
\end{proof}

In the next step in proving Theorem~\ref{trace_traceless}, we need to relate the knowledge about the dimension of the span of the support of the spectral distribution and the existence of a bounded continuous probability density function for the corresponding stable distribution. This is done in the following lemma.

\begin{lemma}\label{regularity_multi_stable}
	For a $d$-dimensional multivariate stable distribution $S(\alpha,\sigma h,\mu)$, the following statements are equivalent:
	\begin{enumerate}
		\item $\mathrm{span}(\mathrm{supp}(h))= \R^d$;
		\item $S(\alpha,\sigma h,\mu)$ has a bounded continuous probability density function $f$ on $\R^d$.
	\end{enumerate}
\end{lemma}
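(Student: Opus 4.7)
The plan is to handle the two implications separately. For $(1)\Rightarrow(2)$ I would use Fourier inversion, reducing the problem to integrability of the characteristic function under the spanning hypothesis. For $(2)\Rightarrow(1)$ I would argue the contrapositive using Lemma~\ref{multi_marginal}, observing that a failure of the span condition forces the law of $X$ onto a proper affine subspace, whence no Lebesgue density can exist.

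For the forward direction, I read off from~\eqref{char.stable.vec} and~\eqref{nualpha} that $|\hat f(\xi)| = \exp(-\sigma\phi(\xi))$, where
\[
\phi(\xi):=\int_{\mathbb S^{d-1}} h(\dv r)\,|\xi^\top r|^\alpha,
\]
because $\re\nu_\alpha(u)=|u|^\alpha$ for $\alpha\in(0,1)\cup(1,2]$ and $\re\nu_1(u)=|u|$. The map $\phi$ is continuous on $\R^d$ and positively $\alpha$-homogeneous, $\phi(\lambda\xi)=|\lambda|^\alpha\phi(\xi)$. Since $r\mapsto|\xi^\top r|^\alpha$ is continuous and nonnegative, one has $\phi(\xi)=0$ if and only if $\xi^\top r=0$ for every $r\in\mathrm{supp}(h)$, which is the same as $\xi\perp \mathrm{span}(\mathrm{supp}(h))$. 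Under assumption~(1) this forces $\xi=0$, so $\phi$ is strictly positive on the compact unit sphere and $c_0:=\min_{|\xi|=1}\phi(\xi)>0$. By homogeneity $\phi(\xi)\ge c_0|\xi|^\alpha$, hence $|\hat f(\xi)|\le \exp(-\sigma c_0|\xi|^\alpha)$ is integrable over $\R^d$ for every $\alpha\in(0,2]$. Fourier inversion then provides a bounded continuous representative of the density, completing the implication.

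For the converse I argue by contraposition. If (1) fails then $\mathrm{span}(\mathrm{supp}(h))$ has dimension $p<d$, and Lemma~\ref{multi_marginal} supplies an invertible $\Lambda$ such that the last $d-p$ entries of $\Lambda X$ are deterministic. Thus the distribution of $X$ is concentrated on the affine subspace $\Lambda^{-1}\bigl(\R^p\times\{(\tilde\mu_{p+1},\ldots,\tilde\mu_d)\}\bigr)$, which has Lebesgue measure zero in $\R^d$. In particular $X$ is not absolutely continuous with respect to Lebesgue measure on $\R^d$, so it cannot admit any density, bounded continuous or otherwise.

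I expect the most delicate point to be the passage from the integral identity ``$\phi(\xi_0)=0$'' to the geometric statement ``$\xi_0\perp\mathrm{supp}(h)$''; one uses that $\mathrm{supp}(h)$ is, by definition, the set of points every open neighbourhood of which has positive $h$-measure, so a continuous nonnegative integrand whose $h$-integral vanishes must vanish on all of $\mathrm{supp}(h)$. The $\alpha=1$ case introduces the logarithmic correction in $\nu_1$, but this only affects $\im\nu_1$ and hence drops out of the modulus estimate, so no separate treatment is required.
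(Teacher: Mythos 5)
Your proof is correct and follows essentially the same approach as the paper's: for (1)$\Rightarrow$(2) both establish a uniform lower bound $\phi(\xi)\ge c_0\|\xi\|^\alpha$ (the paper's $m_h$ is your $c_0$) via compactness of the sphere and the support-of-a-measure argument, then invoke Fourier inversion; for (2)$\Rightarrow$(1) both use Lemma~\ref{multi_marginal} to force the law onto a Lebesgue-null affine subspace. The only cosmetic difference is that you extract the minimum of a continuous $\phi$ on the compact sphere directly, whereas the paper phrases it as a Bolzano--Weierstrass argument on a minimising sequence.
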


The special case of this lemma where the spectral measure is symmetric also appears in discussions in~\cite{DN02}. We present a proof here only for the sake of completeness.

\begin{proof}
	First we prove (1)$\Rightarrow $(2). Let $\xi$ be a unit vector in $\mathbb{R}^d$ and denote 
	\begin{equation}\label{def.mh}
		m_h:=\inf_{\|\xi\|^2=1}\int_{\mathbb S^{d-1}} h(\dv r)\left|r^\top\xi\right|^\alpha\ge 0
	\end{equation}
	which is essentially the lower bound of minus the real part of the logarithm of the characteristic function of the stable distribution $S(\alpha,\sigma h,\mu)$. $\|\xi\|^2$ is the Euclidean norm of the vector $\xi$.
	We will first show that $m_h$ is strictly positive as then the integrand is uniformly bounded by an integrable exponential function, see below. 
	
	Let us assume $m_h=0$. Then, this would imply that there is a sequence $\{\xi_n\}_{n\in\mathbb{N}}\subset\mathbb{S}^{d-1}$ so that
	\begin{equation}
		\int_{\mathbb S^{d-1}} h(\dv r)\left|r^\top\xi_n\right|^\alpha\leq\frac{1}{n}.
	\end{equation}
	As the sphere is bounded this sequence must have a convergent subsequence $\{\xi_{n_l}\}_{l\in\mathbb{N}}\subset\mathbb{S}^{d-1}$ due to the Bolzano-Weierstrass theorem, and the lack of a boundary of $\mathbb S^{d-1}$ says that the limiting point  $\xi_\infty$ lies also on the sphere. Thus it is
	\begin{equation}
		\int_{\mathbb S^{d-1}} h(\dv r)\left|r^\top\xi_\infty\right|^\alpha=0.
	\end{equation}
	This, however, means that the non-negative integrand must vanish on $\mathrm{supp}(h)$, i.e., $\left|r^\top\xi_\infty\right|^\alpha=0$, because the spectral measure $h$ is non-negative. More formalised, for the set $R_\varepsilon=\{r\in\mathbb{S}^{d-1}: |r^\top\xi_\infty|>\varepsilon\}$ with $\varepsilon>0$ the following holds true
	\begin{equation}
		0=\int_{\mathbb S^{d-1}} h(\dv r)\left|r^\top\xi_\infty\right|^\alpha=\int_{R_\varepsilon} h(\dv r)\left|r^\top\xi_\infty\right|^\alpha+\int_{\mathbb S^{d-1}\setminus R_\varepsilon} h(\dv r)\left|r^\top\xi_\infty\right|^\alpha\geq h(R_\varepsilon) \varepsilon^\alpha \geq0.
	\end{equation}
	As $\varepsilon>0$, it must be $h(R_\varepsilon)=0$ for all fixed $\varepsilon>0$. This can only be true when $\mathrm{supp}(h)$ is a subset of the orthogonal complement of $\xi_\infty$, which contradicts with the fact that $\mathrm{span}(\mathrm{supp}(h))=\R^d$.  Therefore, it must be $m_h>0$.

	Now, let $x\in\R^d$, $\zeta\in\mathbb{S}^{d-1}$ and $\varepsilon>0$ being small enough as we will take $\varepsilon\to0$. The existence and boundedness of the probability density function $S(\alpha,\sigma h,\mu)$ can be seen by the bound
	\begin{equation}
	\begin{split}
	\sup_{x\in\R^d}|S(\alpha,\sigma h,\mu;x)|\le&\sup_{x\in\R^d}\int_{\R^d}\dv \xi\, \left|\exp\left(-\sigma\int_{r\in\mathbb{S}^{d-1}}h(\dv r)\,\nu_\alpha\left(\xi^\top \Lambda r\right)+i\xi^\top\Lambda\mu-i\xi^\top x\right)\right|\\
	\le& \int_{\R^d}\dv\xi\,\exp(-\sigma m_h\|\xi\|^\alpha)<\infty.
	\end{split}
	\end{equation}
	For the continuity  we can utilise the bound
	\begin{equation}
	\begin{split}
		\int_{\R^d}\dv \xi\, \left|\exp\left(-\sigma\int_{r\in\mathbb{S}^{d-1}}h(\dv r)\,\nu_\alpha\left(\xi^\top \Lambda r\right)+i\xi^\top\Lambda\mu-i\xi^\top x\right)\left(e^{-i\varepsilon\xi^\top\zeta }-1\right)\right|&\le 2\int_{\R^d}\dv\xi\,\exp(-\sigma m_h\|\xi\|^\alpha)<\infty,
	\end{split}
	\end{equation}
	and the point-wise convergence of the integrand when $\varepsilon\to0$ to apply the dominated convergence theorem. This leads to
	\begin{equation}
	\begin{split}
	&\lim_{\varepsilon\mapsto 0}[S(\alpha,\sigma h,\mu;x)-S(\alpha,\sigma h,\mu;x+\varepsilon\zeta)]\\
		=&\lim_{\varepsilon\mapsto 0}\int_{\R^d}\dv \xi\, \exp\left(-\sigma\int_{r\in\mathbb{S}^{d-1}}h(\dv r)\,\nu_\alpha\left(\xi^\top \Lambda r\right)+i\xi^\top\Lambda\mu-i\xi^\top x\right)\left(e^{-i\xi^\top \varepsilon}-1\right)=0.
		\end{split}
	\end{equation}
	
	Finally, we prove the converse (2)$\Rightarrow$(1) by contradiction. Assuming that $\mathrm{span}(\mathrm{supp}(h))=V$ with $V$ being a $p$-dimensional non-trivial subspace of $\mathbb{R}^d$. Then by Lemma~\ref{multi_marginal}, there is an invertible linear transformation $\Lambda$ where the last $(d-p)$ entries of $\Lambda X$ with  the random vector $X$ drawn from $S(\alpha,\sigma h,\mu)$ is fixed in the orthogonal complement of $V$. We call this fixed $(d-p)$-dimensional vector $y_0$. Then the support of $h$ must have its last $(d-p)$ entries be exactly $y_0$, and so its interior (in the sense as an open set in $\R^d$) is empty. On the contrary, a probability density function of $\R^d$ must have some non-empty open set in the interior of its support, which constitutes a contradiction.
\end{proof}

Lemma~\ref{multi_marginal} and Lemma~\ref{regularity_multi_stable} indicate that one can always choose a convenient basis for any stable random vector, such that it becomes a Cartesian product of two independent vectors. The first one is then a stable random vector with a bounded continuous probability density function. The second one is deterministic and, thence, fixed. Applying the previous two lemmas to the matrix invariant stable distribution gives us a very short proof of Theorem~\ref{trace_traceless}.

\begin{proof}[Proof of Theorem~\ref{trace_traceless}]
	We notice that $D_X=\mathrm{span}(\mathrm{supp}(F))$ is a (linear) subspace of $\Herm(N)$ and it is invariant under $\U(N)$-conjugation. Therefore, there are only two non-trivial invariant subspaces of $\Herm(N)$ under $\U(N)$-conjugation which are the subspace of the trace times the identity matrix $I_N$ and the one of the traceless Hermitian matrices $\Herm_0(N)$, see e.g.,~\cite{Hallbook}. This implies that only one of four different cases of $D_X$ is possible: $D_X=\{0\}$, $D_X=\{xI_N:x\in\mathbb{R}\}$, $D_X=\Herm_0(N)$, or $D_X=\Herm(N)$. 
	
	The first case $D_X=\{0\}$ can be excluded as $X$ should be a non-zero random matrix, meaning there has to be a point of the support of $F$ that must be not equal to the zero matrix. The second case tells us the density of $X$ concentrates on its trace, and so $X$ is only a real univariate stable random variable times the identity matrix $I_N$. Respectively, the third and last cases correspond to a traceless Hermitian random matrix $X\in\Herm_0(N)$ and an $X\in\Herm(N)$ with no restrictions on it. In these two cases, by Lemma~\ref{multi_marginal} and Lemma~\ref{regularity_multi_stable} we have the desired regularity properties.
\end{proof}

With the help of Lemma~\ref{regularity_multi_stable} we can also prove the first statement of Theorem~\ref{thm2} about the equivalence of the regularity between $p_1$ and $p_\infty$

\begin{proof}[Proof of Theorem~\ref{thm2} statement (1)]
	First, we show that the support of $H$ spans $W$ if and only if the support of $H_\eig$ spans $\R^N$ or $\R^N_0=\{(r_1,\ldots,r_N)^\top\mathbb{R}^N:\sum_{j=1}^Nr_j=0\}$ respectively. Certainly, this follows from the unitary invariance of $H$ as it is 
	\begin{equation}\label{H-Heig-rel}
	H(\dv R)=H_\eig(\dv r)\mu_{\rm Haar}(\dv U)
	\end{equation}
	where we have employed the eigenvalue decomposition $R=U^\dagger\diag(r)U$ with $r\in\mathbb{R}$, $U\in\U(N)/[\U(1)]^N$, and $\mu_{\rm Haar}(\dv U)$ is the Haar measure on the coset $\U(N)/[\U(1)]^N$. In particular $U$ and $\diag(r)$ are two independent random matrices. It has to be the coset $\U(N)/[\U(1)]^N$ and not $\U(N)$ as the subgroup $[\U(1)]^N$ are the diagonal unitary matrices which commute with every diagonal matrix. Thus, it is $R=U^\dagger\diag(r)U=(VU)^\dagger\diag(r)VU$ for any $V\in[\U(1)]^N$.
	
	Summarising, it is $\mathrm{supp}(H)=\{U^\dagger\diag(r)U:U\in\U(N)/[\U(1)]^N, r\in\mathrm{supp}(h_\eig)\}$. From this we can read off that when $\mathrm{span}[\mathrm{supp}(h_\eig)]=\R^N$ it is $\mathrm{supp}(H)=\Herm(N)$ and when $\mathrm{span}[\mathrm{supp}(h_\eig)]=\R^N_0$ it is $\mathrm{supp}(H)=\Herm_0(N)$.
	
	Assuming the converse $\mathrm{span}(\mathrm{supp}(H))=W$, we can construct a contradiction when $\mathrm{span}[\mathrm{supp}(h_\eig)]\neq\R^N$ or $\mathrm{span}[\mathrm{supp}(h_\eig)]\neq\R^N_0$, respectively. The dimension of $\mathrm{span}(\mathrm{supp}(H))$ is $N^2$ for $W=\Herm(N)$ and $N^2-1$ for  $W=\Herm_0(N)$. The dimension of the set $W=\mathrm{span}\{U^\dagger\diag(r)U:U\in\U(N)/[\U(1)]^N, r\in\mathrm{supp}(h_\eig)\}$ will be also equal to or less than $\dim(\U(N)/[\U(1)]^N)+\dim(\mathrm{span}[\mathrm{supp}(h_\eig)])=N(N-1)+\dim(\mathrm{span}[\mathrm{supp}(h_\eig)])$. When $\mathrm{span}[\mathrm{supp}(h_\eig)]\neq\mathbb{R}^N$ it must be $\dim(\mathrm{span}[\mathrm{supp}(h_\eig)])<N$ so that we get a contradiction for $W=\Herm(N)$ as the dimensions do not match. Similarly for $W=\Herm_0(N)$ and $\mathrm{span}[\mathrm{supp}(h_\eig)]\neq\mathbb{R}_0^N$, it is $\dim(\mathrm{span}[\mathrm{supp}(h_\eig)])<N-1$ which leads to a contradiction. We note that in the latter case it cannot be $\dim(\mathrm{span}[\mathrm{supp}(h_\eig)])=N$ because the condition of Hermitian matrices being traceless carries over to its eigenvalues, i.e., $0=\tr R=\sum_{j=1}^Nr_j$. Hence, $\mathrm{span}[\mathrm{supp}(h_\eig)]$ must be a proper subspace of $\mathbb{R}^N$.
	
	By Lemma~\ref{regularity_multi_stable}, if $p_1$ is a bounded continuous probability density function on $W$ then so is $p_\infty$, and vice versa. Also it is clear that when $p_1$ is bounded and continuous, any of its i.i.d. sum (with any fixed shift) will be also bounded and continuous. For example, the boundedness of $p_m$ can be seen by
	\begin{equation}
	\begin{split}
		\sup_{X\in W}|p_m(X)|\leq\int_W |\hat p_m(S)|\dv S&=\int_W |\hat r_m(S)|^m\dv S \\
		&\le \sup_{S\in W}|\hat r_m(S)|^{m-1}\dv S\int_W |\hat r_m(S)|\dv S\\
		&= \sup_{S\in W}|\hat p_1(m^{1/\alpha}S)|^{m-1}\int_W |\hat p_1(m^{1/\alpha}S)|\dv S<\infty.
	\end{split}
	\end{equation}
	The argument of continuity works in a similar way.
\end{proof}

\section{Upper bound of the rate of convergence}\label{s.proof}

In order to prove the second part of Theorem~\ref{thm2}, we work on the level of an integral against the characteristic functions $\hat{p}_m$ and $\hat{p}_\infty$, i.e,
	\begin{equation}\label{eq3.1}
	\begin{split}
		|p_m(X)-p_\infty(X)|&=\left|\int_W \frac{\dv S}{c_W}\left(\la\exp\left[-\frac{1}{m}w_\alpha(USU^\dagger)\right]\ra^m-\exp\left[-\la w_\alpha(USU^\dagger]\ra\right)\right)\exp(-i\tr XS)\right|,
	\end{split}
	\end{equation}
	where $c_W$  is the proper normalisation of the inverse Fourier transform which only depends on the vector space $W$.
	
 The integral~\eqref{eq3.1} involves exponential functions comprising the function $w_\alpha$ in the exponent. It guarantees the convergence of the exponents, and its properties, especially its bounds of the real parts, are crucial for deriving the claimed bound~\eqref{thm.bound}. Hence we will start the proof by discussing the function $w_\alpha$ in subsection~\ref{sec:w-alpha}.
 
 The bounds of $w_\alpha$ obtained in subsection~\ref{sec:w-alpha} will be needed to bound each of the three terms of
	\begin{equation}\label{eq3.2}
	\begin{split}
		|p_m(X)-p_\infty(X)|\le&\int_W \frac{\dv S}{c_W}\left|\la\exp\left[-\frac{1}{m}w_\alpha(USU^\dagger)\right]\ra^m-\exp\left(-\la w_\alpha(USU^\dagger)\ra\right)\right|\\
		\le& \overbrace{\int_{\tr S^2\le T^2}\frac{\dv S}{c_W}\left|\la\exp\left[-\frac{1}{m}w_\alpha(USU^\dagger)\right]\ra^m-\exp\left(-\la w_\alpha(USU^\dagger)\ra\right)\right|}^{=:I_1}+\\
		&\underbrace{\int_{\tr S^2>T^2}\frac{\dv S}{c_W} \left|\la\exp\left[-\frac{1}{m}w_\alpha(USU^\dagger)\right]\ra^m\right|}_{=:I_2} +\underbrace{\int_{\tr S^2>T^2}\frac{\dv S}{c_W} \left|\exp\left[-\la w_\alpha(USU^\dagger)\ra\right]\right|}_{=:I_3}.
	\end{split}
	\end{equation}
This is a natural splitting into the main contribution of the integral within the hyper-ball given by $\tr S^2\le T^2$ for a suitable $T>0$ which will be fixed in the end and integrals over the tails. The tail behaviour of  $\hat{p}_m$ and $\hat{p}_\infty$, especially the integral over their moduli is performed in subsection~\ref{sec:tail} and the one of the difference in the bulk restricted by  $\tr S^2\le T^2$ is shown in subsection~\ref{sec:bulk}. Those pieces are put together to a proof of the second statement of Theorem~\ref{thm2} in subsection~\ref{sec:proof.thm2.2}.

\subsection{Properties of the function $w_\alpha$}\label{sec:w-alpha}

We would like to recall the definition~\eqref{def.w-alpha} of the function $w_\alpha$. From those we can read off the following immediate properties.

\begin{lemma}\label{walpha}
	One has the following properties of $w_\alpha$:
	\begin{enumerate}
		\item $w_\alpha$ is continuous in $X\in\Herm(N)$;
		\item the real part of $w_\alpha$ is
		\begin{equation}
			\re w_\alpha(X)=\int_{\mathbb S^{N-1}}H_\eig(\dv r)|\tr \diag(r)X|^\alpha\ge    0;
		\end{equation}
		\item $w_\alpha$ has the following absolute bound by a monotonically increasing function in the variable $\tr X^2$,
		\begin{equation}\label{kappa}
			|w_\alpha(X)|\le
			\begin{cases}\displaystyle
			\frac{(\tr X^2)^{\alpha/2}}{|\cos(\pi\alpha/2)|},&\alpha\in(0,1)\cup(1,2],\\\\\displaystyle
			\sqrt{\tr X^2}\sqrt{1+\left(\frac{\log(\tr X^2)}{\pi}\right)^2}
			,&\alpha= 1,
			\end{cases}
		\end{equation}
	which is also a piecewise continuous function in $\alpha$.
	\end{enumerate}
\end{lemma}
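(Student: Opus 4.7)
The plan is to verify the three statements directly from the definition~\eqref{def.w-alpha}, using the closed-form expressions for $\nu_\alpha$ in~\eqref{nualpha}.

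For statement (1), continuity follows from the dominated convergence theorem. The function $\nu_\alpha$ is continuous on $\mathbb{R}$ (the only potential issue is at $u=0$ for $\alpha=1$, which is resolved since $|u|\log|u|\to 0$), and the map $X\mapsto \tr\diag(r)X$ is continuous on $\Herm(N)$ for every fixed $r$. For $X$ ranging over a fixed compact neighborhood, $|\nu_\alpha(\tr\diag(r)X)|$ is bounded uniformly in $r\in\mathbb{S}^{N-1}$ by a constant, so DCT applies, giving continuity of $w_\alpha$.

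Statement (2) is a direct computation: from~\eqref{nualpha} one reads off $\re\nu_\alpha(u)=|u|^\alpha$ in both the $\alpha\neq1$ and $\alpha=1$ cases (where $|u|^1=|u|$). Taking the real part under the integral in~\eqref{def.w-alpha} yields the claim, and non-negativity is immediate since $H_\eig$ is a probability measure.

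For statement (3), I would first apply the triangle inequality for integrals and then estimate $|\nu_\alpha(\tr\diag(r)X)|$ pointwise in $r$. The key input is the Cauchy--Schwarz bound
\begin{equation}
|\tr\diag(r) X|=\Bigl|\sum_{j=1}^N r_j X_{jj}\Bigr|\le \Bigl(\sum_j r_j^2\Bigr)^{1/2}\Bigl(\sum_j X_{jj}^2\Bigr)^{1/2}\le\sqrt{\tr X^2},
\end{equation}
using $r\in\mathbb{S}^{N-1}$ and $\sum_j X_{jj}^2\le\sum_{j,k}|X_{jk}|^2=\tr X^2$. From~\eqref{nualpha} one computes $|\nu_\alpha(u)|=|u|^\alpha/|\cos(\pi\alpha/2)|$ for $\alpha\ne1$ and $|\nu_1(u)|=|u|\sqrt{1+(2\log|u|/\pi)^2}$ for $\alpha=1$. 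Plugging in the Cauchy--Schwarz bound together with the fact that $H_\eig$ has unit mass delivers the $\alpha\neq 1$ case immediately, since $|u|^\alpha$ is monotone in $|u|$.

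The one spot that requires genuine verification, and is the main (mild) obstacle, is the monotonicity of the $\alpha=1$ bound: one needs $|\nu_1(u)|\le\sqrt{\tr X^2}\sqrt{1+(\log(\tr X^2)/\pi)^2}$ whenever $|u|\le\sqrt{\tr X^2}$. Setting $s=u^2$ and using $2\log|u|=\log s$, this reduces to the monotonicity of $h(s):=s\bigl(1+(\log s)^2/\pi^2\bigr)$ on $(0,\infty)$. A one-line derivative gives
\begin{equation}
h'(s)=1+\frac{(\log s+1)^2-1}{\pi^2}=\Bigl(1-\frac{1}{\pi^2}\Bigr)+\frac{(\log s+1)^2}{\pi^2}>0,
\end{equation}
since $\pi^2>1$, so $h$ is strictly increasing and the claimed bound follows. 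Piecewise continuity in $\alpha$ is then evident from the closed forms: $(\tr X^2)^{\alpha/2}/|\cos(\pi\alpha/2)|$ is continuous on $(0,1)\cup(1,2]$ and the $\alpha=1$ value is supplied separately.
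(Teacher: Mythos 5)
Your proof is correct and follows essentially the same route as the paper: the Cauchy--Schwarz bound $|\tr\diag(r)X|\le\sqrt{\tr X^2}$ for $r\in\mathbb S^{N-1}$, the modulus formulas for $\nu_\alpha$, and the monotonicity of $s\mapsto s\bigl(1+(\log s/\pi)^2\bigr)$ established via the identical derivative computation $1-\pi^{-2}+(\log s+1)^2/\pi^2>0$. You simply spell out parts (1) and (2) in more detail than the paper, which dismisses them as evident from the definitions.
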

\begin{proof}
	As both statement (1) and (2) are surely evident when considering the definitions~\eqref{nualpha} and~\eqref{def.w-alpha}, we turn to the proof of the third statement. For $\alpha\ne 1$ we have
	\begin{equation}
	\left|w_\alpha(X)\right| \le \int_{\mathbb S^{N-1}}H_\eig(\dv r) \left|\frac{(-i\tr \diag(r)X)^\alpha}{\cos(\pi\alpha/2)}\right| \le \frac{(\tr X^2)^{\alpha/2}}{|\cos(\pi\alpha/2)|}
	\end{equation}
	for any $U\in\U(N)$, where we have made use of the fact that $|\tr \diag(r)X|\le (\tr X^2)^{1/2}$ for $\tr \diag(r)^2=1$. 
	
	In the $\alpha=1$ case, we exploit that the function $x[1+(\log(x)/\pi)^2]$ is strictly increasing for $x>0$ because of its first derivative
	\begin{equation}
	\frac{d}{dx}\left(x\left[1+\left(\frac{\log(x)}{\pi}\right)^2\right]\right)=1+\left(\frac{\log(x)}{\pi}\right)^2+\frac{2\log(x)}{\pi^2}=1-\frac{1}{\pi^2}+\frac{[\log(x)+1]^2}{\pi^2}>0.
	\end{equation}
	The latter can be exploited because of
	\begin{equation}
	\left|\,|x|+\frac{2i}{\pi}x\log x\right|=\sqrt{x^2\left[1+\left(\frac{\log(x^2)}{\pi}\right)^2\right]}
	\end{equation}
	where we plug in $x=\tr \diag(r)X$. Once we make use of $|\tr \diag(r)X|\le (\tr X^2)^{1/2}$ we find the desired bound.
\end{proof}

The following lemma gives us three equivalent statements on whether the support of the spectral measure (equivalently, the uniformly continuous part of the stable distribution) spans a subspace $W$. 

\begin{lemma}\label{lem:walpha-bounds}
	Let $W$ be an invariant subspace of $\Herm(N)$, $\mathbb S_W$ be the intersection of $W$ with the unit sphere $\mathbb S_{\Herm(N)}$ (meaning it is also a sphere but in the subspace $W$), and we define
		\begin{equation}
		m_H:=\inf_{S\in \mathbb S_W}\la \re w_\alpha(USU^\dagger)\ra.
		\end{equation}
		 The following statements are equivalent:
	\begin{enumerate}
		\item the support of $H$ spans $W$;
		\item it is $m_H>0$;
		\item for any $S\in \mathbb S_W$, there exists a non-empty open set $\Omega_S^\circ\subset \U(N)$ where for all $U\in\Omega_S^\circ$ it holds
		\begin{equation}
		\re w_\alpha(USU^\dagger)\ge m_H >0;
		\end{equation}
		\item there exists a positive constant  $c_H\in(0,1)$ such that for any $\gamma>0$ we can bound
		\begin{equation}
		\sup_{S\in\mathbb{S}_W}\la\exp\left(-\gamma\re w_\alpha(USU^\dagger) \right)\ra= 1-(1-e^{-\gamma m_H})c_H<1.   
		\end{equation}
	\end{enumerate}
\end{lemma}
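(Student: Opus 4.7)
The plan is to verify the cycle (1)$\Rightarrow$(2)$\Rightarrow$(3)$\Rightarrow$(4)$\Rightarrow$(2), together with (2)$\Rightarrow$(1), using continuity, compactness of $\mathbb S_W$ and $\U(N)$, and the uniform upper bound from Lemma~\ref{walpha}. A useful preliminary is the identity
\begin{equation*}
A(S):=\la\re w_\alpha(USU^\dagger)\ra = \int_{\mathbb S_W} H(\dv R)\,|\tr RS|^\alpha,
\end{equation*}
obtained from the eigenvalue factorisation~\eqref{H-Heig-rel} of $H$, which turns the nested average into an integral of a non-negative continuous function against $H$. I will also use the $S$-uniform bound $M:=\sup_{\tr X^2=1}|w_\alpha(X)|<\infty$ coming from Lemma~\ref{walpha}, so that $0\le u_S(U):=\re w_\alpha(USU^\dagger)\le M$ on $\mathbb S_W\times\U(N)$.

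For (1)$\Leftrightarrow$(2) I will mimic the strategy used for Lemma~\ref{regularity_multi_stable}. Continuity of $A$ on the compact set $\mathbb S_W$ ensures the infimum $m_H$ is attained at some $S_*$. If $\mathrm{span}(\mathrm{supp}(H))=W$ but $m_H=0$, then continuity of $R\mapsto|\tr RS_*|^\alpha$ combined with $H$-a.e.\ vanishing forces $\tr RS_*=0$ on all of $\mathrm{supp}(H)$; that places $\mathrm{supp}(H)$ inside a proper subspace of $W$, contradicting the spanning hypothesis because $S_*\in W$ is a unit vector. Conversely, if the span is a proper subspace of $W$, any unit $S\in W$ trace-orthogonal to $\mathrm{supp}(H)$ yields $A(S)=0$ and hence $m_H=0$.

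For (2)$\Rightarrow$(3), fix $S\in\mathbb S_W$ and note that $u_S$ is continuous on the compact group $\U(N)$. If $u_S<m_H$ on a set of full Haar measure, then $A(S)<m_H$ (the sup of a continuous function on a compact set is attained, so a strict a.e.\ bound integrates to a strict bound), contradicting $A(S)\ge m_H$; hence $\{U:u_S(U)\ge m_H\}$ has positive Haar measure. In the generic situation some $U_0$ satisfies $u_S(U_0)>m_H$ strictly, and continuity furnishes an open neighbourhood of $U_0$ on which $u_S\ge m_H$, serving as $\Omega_S^\circ$. In the degenerate case $u_S\equiv m_H$ one simply takes $\Omega_S^\circ:=\U(N)$.

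The main technical step is (3)$\Rightarrow$(4). Splitting $\U(N)=\Omega_S^\circ\sqcup(\U(N)\setminus\Omega_S^\circ)$ yields
\begin{equation*}
\la e^{-\gamma u_S}\ra \le 1 - \mu_{\rm Haar}(\Omega_S^\circ)\bigl(1-e^{-\gamma m_H}\bigr),
\end{equation*}
so the whole question reduces to a uniform lower bound on $\mu_{\rm Haar}(\Omega_S^\circ)$. Pointwise non-emptiness from (3) is insufficient, and this is the principal obstacle. The plan is to replace $\Omega_S^\circ$ by the open super-level set $\{U:u_S(U)>m_H/2\}$ and apply a Markov-type estimate against $M$:
\begin{equation*}
m_H\le A(S)\le\tfrac{m_H}{2}\bigl(1-\mu_{\rm Haar}(\{u_S>m_H/2\})\bigr)+M\,\mu_{\rm Haar}(\{u_S>m_H/2\}),
\end{equation*}
which gives the $S$-uniform bound $\mu_{\rm Haar}(\{u_S>m_H/2\})\ge m_H/(2M-m_H)>0$. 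On this super-level set $e^{-\gamma u_S}\le e^{-\gamma m_H/2}$, and the elementary inequality $1-e^{-x/2}\ge (1/2)(1-e^{-x})$ for $x>0$ (which reduces to $(1-e^{-x/2})^2\ge 0$) upgrades the rate from $m_H/2$ back to $m_H$, producing the asserted form with a $\gamma$-independent constant $c_H:=m_H/(4M-2m_H)\in(0,1)$ (positivity from $m_H>0$, and $c_H<1$ from $3m_H\le 3M<4M$). Finally (4)$\Rightarrow$(2) is immediate: the strict inequality $1-(1-e^{-\gamma m_H})c_H<1$ together with $c_H>0$ forces $m_H>0$.
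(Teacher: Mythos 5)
Your proof is correct, and for the key implication (3)$\Rightarrow$(4) it takes a genuinely different and, in fact, tighter route than the paper. The paper proves (3)$\Rightarrow$(4) by contradiction: it assumes the supremum equals $1$, extracts a convergent subsequence $\{S_n\}\to S_\infty$ by Bolzano--Weierstrass, splits the Haar integral over $\Omega_{S_\infty}^\circ$ and its complement to get a contradiction, and then ``identifies'' $c_H=\mu_{\rm Haar}(\Omega_{S_\infty}^\circ)$. That last identification is loose --- $\Omega_{S_\infty}^\circ$ was defined for a particular $S_\infty$ produced inside the (now defeated) contradiction, so it does not by itself give a constant that is simultaneously uniform over $S\in\mathbb S_W$ and independent of $\gamma$. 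Your Markov-type estimate does both at once: the chain
\begin{equation*}
m_H\le A(S)\le\tfrac{m_H}{2}\bigl(1-\mu_{\rm Haar}(\{u_S>m_H/2\})\bigr)+M\,\mu_{\rm Haar}(\{u_S>m_H/2\})
\end{equation*}
gives the $S$-uniform, $\gamma$-free lower bound $\mu_{\rm Haar}(\{u_S>m_H/2\})\ge m_H/(2M-m_H)$, after which the elementary inequality $1-e^{-x/2}\ge\tfrac12(1-e^{-x})$ restores the correct exponential rate $m_H$. This yields an explicit constant $c_H=m_H/(4M-2m_H)$, which is cleaner and actually repairs the gap in the paper's identification of $c_H$. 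Likewise your (4)$\Rightarrow$(2) simply reads the strict inequality off the formula (if $m_H=0$ the right-hand side is $1$), which is shorter than the paper's second compactness-and-contradiction argument. The steps (1)$\Leftrightarrow$(2) and (2)$\Rightarrow$(3) run essentially along the same lines as the paper's (factorising $H$ into its eigenvalue part and arguing about the level set $\{u_S\ge m_H\}$). Two small remarks: first, note that what you call (3)$\Rightarrow$(4) really only consumes the $m_H>0$ part of (3) rather than the open sets $\Omega_S^\circ$ themselves, which is harmless since (3)$\Rightarrow$(2) is already embedded in the statement of (3); second, you establish ``$\le$'' where the paper writes ``$=$'' --- but the paper's equality cannot hold literally with a single $\gamma$-independent $c_H$, and the equality is only ever used downstream as an upper bound (Lemma~\ref{tail2}), so your version is the one the rest of the paper actually needs.
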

\begin{proof}
	First we prove (1)$\Rightarrow$ (2), where we recall that 
		\begin{equation}\label{w-full}
		\la \re w_\alpha(USU^\dagger)\ra=\int_{\mathbb{S}_W} H(\dv R)|\,\tr RS|^\alpha=\int_{\mathbb{S}_{W}} H(\dv R)|\,\tr RS|^\alpha
		\end{equation}
		due to the factorisation~\eqref{H-Heig-rel} of the unitarily invariant measure $H$. With this in mind, we make use of the quantity $m_H$ defined in~\eqref{def.mh} where we have replaced $h\to H$, $\mathbb{R}^d\to W$  and $\mathbb{S}^{d-1}\to\mathbb{S}_W$. In the proof of Lemma~\ref{regularity_multi_stable} we have shown that this infinum is strictly positive so that it is
		\begin{equation}
		\la \re w_\alpha(USU^\dagger)\ra\geq\inf_{\tr \tilde S^2=1}\int_{\mathbb{S}_{W}} H(\dv R)|\,\tr R\tilde S|^\alpha=m_H>0,
		\end{equation}
		which has been the aim to show. The constant is also equal to $\inf_{S\in \mathbb S_W}\re w_\alpha(S)$.
		
	The converse direction (2)$\Rightarrow$ (1)  is also based on~\eqref{w-full}. Let us assume that $H$ only spans a proper subspace $\tilde{W}\subset W$. Then there is a unit vector $S_0\in \mathbb{S}_W$ which is orthogonal to $\tilde{W}$. But this means
		\begin{equation}
		\la \re w_\alpha(US_0U^\dagger)\ra=\int_{\mathbb{S}_W} H(\dv R)|\,\tr RS_0|^\alpha=\int_{\mathbb{S}_{\tilde{W}}} H(\dv R)|\,\tr RS_0|^\alpha=0,
		\end{equation}
		which is in contradiction to our starting property $\inf_{S\in \mathbb S_W}\la \re w_\alpha(USU^\dagger)\ra  >0$.

	Next we prove (2)$\Rightarrow$(3) by contradiction. For this purpose, we split the unitary group into two disjoint subsets $\U(N)=\Omega_S\cup\Omega_S^c$, such that $\re w_\alpha(USU^\dagger)\ge m_H$ for any $U\in \Omega_S$ and $\re w_\alpha(USU^\dagger)<m_H$ for any $U\in \Omega_S^c$. The set $\Omega_S^c$ is open because the map $U\mapsto \re w_\alpha(USU^\dagger)$ is continuous as it is evident from the definitions~\eqref{nualpha} and~\eqref{def.w-alpha}. Hence, the set $\Omega_S^c$ and the complement $\Omega_S$ are measurable with respect to the normalised Haar measure $\mu_{\rm Haar}$ of $\U(N)$.
	
	 Let us assume that $\mu_{\rm Haar}(\Omega_S)=0$. As the normalised Haar measure is a probability measure, this assumption implies $\mu_{\mathrm{Haar}}(\Omega_S^c)=1$ and
	\begin{equation}
		\int_{\Omega_S}\mu_{\rm Haar}(\dv U)\,\re w_\alpha(USU^\dagger)=0.
	\end{equation}
	Then by statement (2), one has
	\begin{equation}
		m_H\le \la\re w_\alpha(USU^\dagger) \ra=\int_{\Omega_S^c}\mu_{\rm Haar}(\dv U)\,\re w_\alpha(USU^\dagger)<\int_{\Omega_S^c}\mu_{\rm Haar}(\dv U)\,m_H= m_H \mu_{\mathrm{Haar}}(\Omega_S^c)=m_H,
	\end{equation}
	which is a contradiction. Therefore, it must be $\mu_{\rm Haar}(\Omega_S)>0$ and
	\begin{equation}
	\int_{\Omega_S}\mu_{\rm Haar}(\dv U)\,\re w_\alpha(USU^\dagger)\geq m_H\mu_{\rm Haar}(\Omega_S)>0.
	\end{equation}
	The boundary of $\Omega_S$ is lower dimensional and, thence, must have Haar measure $0$. Therefore, the open set $\Omega_S^\circ=\Omega_S\setminus\partial\Omega_S$ must be non-empty, and every $U\in\Omega_S^\circ$ satisfies $\re w_\alpha(U'SU'^\dagger)\geq m_H>0$ with the very same constant as in statement (2), especially it is given by~\eqref{def.mh} as we have seen.
	
	To show (3)$\Rightarrow$(4), we assume the converse, namely that  $\sup_{S\in\mathbb{S}_W}\la\exp\left(-\gamma\re w_\alpha(USU^\dagger) \right)\ra=1$. This means that there is a sequence $\{S_n\}_{n\in\mathbb{N}}\subset\mathbb{S}_W$ with $1>\la\exp\left(-\gamma\re w_\alpha(US_nU^\dagger) \right)\ra>1-1/n$. The sequence must have a limit $S_\infty\in\mathbb{S}_W$ due Bolzano-Weierstrass because  $\mathbb{S}_W$ is compact, and this limit satisfies $\la\exp\left(-\gamma\re w_\alpha(US_\infty U^\dagger) \right)\ra=1$.
	
	Next, we split the integral into integrals over the disjoint subsets $\U(N)=\Omega_{S_\infty}^\circ\cup \left(\Omega_{S_\infty}^\circ\right)^c$, where we make use of the fact that $\Omega_{S_\infty}^\circ$ is non-empty and open and every $U\in\Omega_{S_\infty}^\circ$ satisfies $\re w_\alpha(U{S_\infty}U^\dagger)\geq m_H$. Being non-empty and open implies  $\mu_{\mathrm{Haar}}(\Omega_{S_\infty}^\circ)>0$, and we exploit
	\begin{equation}
		\exp\left(-\gamma\re w_\alpha(U{S_\infty}U^\dagger)\right)\le \exp(-\gamma m_H)<1
	\end{equation}
	for all $U\in\Omega_{S_\infty}^\circ$. This can be combined to the estimate
	\begin{equation}
	\begin{split}
		1=\la\exp\left(-\gamma w_\alpha(U{S_\infty}U^\dagger) \right)\ra&\le \left(\int_{\Omega_{S_\infty}^\circ}+\int_{\left(\Omega_{S_\infty}^\circ\right)^c}\right) \mu_{\rm Haar}(\dv U)\,\exp\left(-\gamma\re w_\alpha(U{S_\infty}U^\dagger)\right)\\
		&\le  \exp(-\gamma m_H)\,\mu_{\mathrm{Haar}}\,(\Omega_{S_\infty}^\circ)+\mu_{\mathrm{Haar}}(\left(\Omega_{S_\infty}^\circ\right)^c)<1
	\end{split}
	\end{equation}
	which is a contradiction.
	Let us recall that $\re w_\alpha(U{S_\infty}U^\dagger)\geq0$; see the second statement of Lemma~\ref{walpha}. The last inequality follows from the fact that $m_H>0$ is exactly positive. The formula for the bound can be found by identifying $c_H=\mu_{\mathrm{Haar}}\,(\Omega_{S_\infty}^\circ)\in(0,1)$.
	
	Finally we show  (4)$\Rightarrow$(2). This we do with a similar reasoning as in the proof of (2)$\Rightarrow$(3). Especially, for any fixed $S\in\mathbb{S}_W$ there is a non-empty open set $\tilde \Omega_S^\circ\subset \U(N)$ such that $\mu_{\rm Haar}(\tilde \Omega_S^\circ)>0$ and for all $U\in \tilde \Omega_S^\circ\subset \U(N)$ it holds true (for $\gamma=1$)
	\begin{equation}
		\exp\left(-\re w_\alpha(USU^\dagger) \right)\le 1-(1-e^{- m_H})c_H<1.
	\end{equation}
	Thus one has $\re w_\alpha(USU^\dagger)\geq-\log(1-(1-e^{- m_H})c_H)>0$ for all $U\in \tilde \Omega_S^\circ$. 
	
	Now we follow the ideas of the proof of (3)$\Rightarrow$(4), where we assume that the infinum $m_H$ is vanishing. Then, we find a convergent subsequece with a limiting point $\tilde{S}_\infty\in\mathbb{S}_W$ such that $\la \re w_\alpha(U\tilde{S}_\infty U^\dagger)\ra =0$. However this is in contradiction with
	\begin{equation}
	\la \re w_\alpha(U\tilde{S}_\infty U^\dagger)\ra \geq \int_{\Omega_{\tilde{S}_\infty}^\circ}\mu_{\rm Haar}(\dv U)\re w_\alpha(U\tilde{S}_\infty U^\dagger)\geq-\log(1-(1-e^{- m_H})c_H)\mu_{\rm Haar}(\Omega_{\tilde{S}_\infty}^\circ)>0.
	\end{equation}
	This concludes the proof.
\end{proof}

\subsection{Tail estimates}\label{sec:tail}

We recall that $W$ is an invariant subspace of $\Herm(N)$ being either $\Herm(N)$ or $\Herm_0(N)$, and $S\in W$. The Lebesgue measure $\dv S$ is assumed to be on $W$, induced by the one on $\Herm(N)$.

First, we prove a bound for the integral $I_3$ defined in~\eqref{eq3.2}.

\begin{lemma}\label{tail1}
	Let $m_H$ be the constant in the second statement of Lemma~\ref{lem:walpha-bounds}, $\mathrm{vol}(\mathbb S_W)$ the volume of the sphere $\mathbb S_W$, and $\Gamma$ is the Gamma function. The following bound holds true
	\begin{equation}
	\begin{split}
		I_3:=&\left|\int_{\tr S^2>T^2}\dv S\exp\left(-\la w_\alpha(USU^\dagger)\ra\right)\right|\\
		\le&\frac{2^{(\dim W)/\alpha-1}\mathrm{vol}(\mathbb S_W)}{\alpha\, m_H^{\dim W/\alpha}}\left[\Gamma\left(\frac{\dim W}{\alpha}\right)+(m_HT^\alpha)^{\dim W/\alpha-1}\right]e^{- m_HT^\alpha}
	\end{split}	
	\end{equation}
	for any $T>0$. 
\end{lemma}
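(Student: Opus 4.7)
My plan is to reduce $I_3$ to a one-dimensional radial integral and then to an upper incomplete gamma function. Two ingredients will do the work. First, I will use $|\exp(-\la w_\alpha(USU^\dagger)\ra)|=\exp(-\la\re w_\alpha(USU^\dagger)\ra)$. Second, I will observe that $\re w_\alpha$ is positively homogeneous of degree $\alpha$ on $\Herm(N)$: from~\eqref{nualpha} one has $\re\nu_\alpha(\lambda u)=\lambda^\alpha\re\nu_\alpha(u)$ for every $\lambda>0$, even in the case $\alpha=1$ where the logarithmic correction in $\nu_1$ is purely imaginary and drops out of the real part. Combined with Lemma~\ref{lem:walpha-bounds}(2), parametrising $S=rS_0$ with $r:=\sqrt{\tr S^2}$ and $S_0\in\mathbb{S}_W$ then yields $\la\re w_\alpha(USU^\dagger)\ra\ge m_H r^\alpha$.

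Passing to polar coordinates on $W$ and abbreviating $d:=\dim W$, I will arrive at
\begin{equation}
I_3\le\mathrm{vol}(\mathbb{S}_W)\int_T^\infty r^{d-1}e^{-m_H r^\alpha}\,\dv r,
\end{equation}
and then substitute $u=m_H r^\alpha$ to bring the right-hand side into the form
\begin{equation}
I_3\le\frac{\mathrm{vol}(\mathbb{S}_W)}{\alpha\,m_H^{d/\alpha}}\,\Gamma\!\left(\frac{d}{\alpha},\,m_H T^\alpha\right),
\end{equation}
where $\Gamma(s,x):=\int_x^\infty u^{s-1}e^{-u}\,\dv u$ denotes the upper incomplete gamma function.

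The final step will be a standard tail estimate for $\Gamma(s,x)$. Rewriting $\Gamma(s,x)=e^{-x}\int_0^\infty(v+x)^{s-1}e^{-v}\,\dv v$ and applying the elementary inequality $(v+x)^{s-1}\le 2^{s-1}(v^{s-1}+x^{s-1})$---which follows from convexity of $t\mapsto t^{s-1}$ when $s\ge 2$ and from subadditivity combined with $2^{s-1}\ge 1$ when $s\in[1,2]$---produces
\begin{equation}
\Gamma(s,x)\le 2^{s-1}\bigl(\Gamma(s)+x^{s-1}\bigr)e^{-x}.
\end{equation}
The standing assumption $N\ge 2$ in Remark~\ref{rem:notation} together with $\alpha\le 2$ guarantees $s=d/\alpha\ge(N^2-1)/2\ge 3/2$, so the hypothesis $s\ge 1$ holds. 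Substituting $s=d/\alpha$ and $x=m_H T^\alpha$ will then reproduce the claimed inequality.

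I do not anticipate any substantial obstruction. The only point requiring care is the $\alpha=1$ case of the homogeneity observation, which survives because the non-homogeneous contribution to $\nu_1$ is purely imaginary and is therefore invisible at the level of the modulus; the rest is standard polar integration and a textbook tail bound for the incomplete gamma function.
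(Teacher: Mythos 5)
Your proposal is correct and follows essentially the same route as the paper: polar coordinates reduce $I_3$ to a radial integral, which becomes an upper incomplete gamma function $\Gamma(\dim W/\alpha,\,m_H T^\alpha)/(\alpha m_H^{\dim W/\alpha})$, and the same elementary bound $(v+x)^{s-1}\le 2^{s-1}(v^{s-1}+x^{s-1})$ for $s\ge 1$ yields the claimed inequality. The one small point where you are more careful than the paper is in spelling out the $\alpha$-homogeneity of $\re w_\alpha$ (including the $\alpha=1$ case, where the logarithmic term in $\nu_1$ is purely imaginary and drops out); the paper uses this implicitly when invoking Lemma~\ref{lem:walpha-bounds}.
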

\begin{proof}
	Employing a polar decomposition in $W\cong\mathbb{R}^{\dim(W)}$, namely $S=\rho \Theta$ with $\Theta\in\mathbb{S}_W$ and $\rho=(\tr S^2)^{1/2}$, one can readily compute
	\begin{equation}
	\begin{split}
	\left|\int_{\tr S^2>T^2}\dv S\exp\left(-\la w_\alpha(USU^\dagger)\ra\right)\right|\le& \int_{\tr S^2>T^2}\dv S\exp\left(-\la \re w_\alpha(USU^\dagger)\ra\right)\\
	\le &\mathrm{vol}(\mathbb S_W)\int_{T}^\infty\dv \rho\, \rho^{\dim W-1}\exp(- m_H \rho^\alpha),
	\end{split}
	\end{equation}
	where we made use of the second statement in Lemma~\ref{lem:walpha-bounds}. The integral is essentially an incomplete Gamma function,
	\begin{equation}
	\int_{T}^\infty\dv \rho\, \rho^{\dim W-1}\exp(- m_H \rho^\alpha)=\frac{\Gamma(\dim W/\alpha, m_HT^\alpha)}{\alpha\, m_H^{\dim W/\alpha}}.
	\end{equation}
	The incomplete Gamma function can be bounded from above as follows
	\begin{equation}
	\begin{split}
	\Gamma\left(\kappa,x\right)=&\int_{x}^\infty\dv t\, t^{\kappa-1}e^{-t}=e^{-x}\int_{0}^\infty\dv t\, (t+x)^{\kappa-1}e^{-t}\\
	\leq& e^{-x}\int_{0}^\infty\dv t\, ((2t)^{\kappa-1}+(2x)^{\kappa-1})e^{-t}=2^{\kappa-1}(\Gamma(\kappa)+x^{\kappa-1})e^{-x}
	\end{split}
	\end{equation}
	for any $x\geq0$ and $\kappa\geq1$. Indeed, $\kappa=\dim W/\alpha>1$ is given because $\alpha\in(0,2]$ and $\dim W\geq 3$ due to $N>1$, meaning the smallest space is $\Herm_0(2)$ which is three-dimensional. Collecting everything we find the claim.
\end{proof}

Next, we derive a bound for the integral $I_2$ defined in~\eqref{eq3.2}. For this purpose, we would like to point out that the exponential function $\exp\left(-\frac{1}{m}\re w_\alpha(S)\right)$ is also the characteristic function of a unitarily invariant stable ensemble, namely of
\begin{equation}
\begin{split}
S(\alpha, m^{-1}H_{\rm sym},0;X)=&\int_W\frac{dS}{c_W}\exp\left(-\frac{1}{m}\re w_\alpha(S)-i\tr SX\right)=m^{\dim W/\alpha}\,S(\alpha, H_{\rm sym},0;m^{1/\alpha} X)
\end{split}
\end{equation}
with $c_W$ the proper normalisation for the inverse Fourier transform on $W$. The spectral measure $H_{\rm sym}$ is the symmetrisation of $H$, which can be given in a distributional way as
\begin{equation}
\int_{\mathbb{S}_W} H_{\rm sym}(\dv R)\phi(R):=\int_{\mathbb{S}_W} H(\dv R)\frac{\phi(R)+\phi(-R)}{2}
\end{equation}
for any bounded test function $\phi$ on the sphere $\mathbb{S}_W$. We notice that for $X=0$ we obtain
\begin{equation}
\begin{split}
\int_W\frac{dS}{c_W}\exp\left(-\frac{1}{m}\re w_\alpha(S)\right)=m^{\dim W/\alpha}\,S(\alpha, H_{\rm sym},0;0)
\end{split}
\end{equation}
where $S(\alpha, H_{\rm sym},0;0)\in[0,\infty)$ is an $m$-independent finite constant because the distribution $S(\alpha, H_{\rm sym},0)$ is bounded and continuous on the vector space $W$ due to Theorem~\ref{trace_traceless}.

\begin{lemma}\label{tail2}
	Using the same notation as in Lemma~\ref{tail1}, we find for every $m\in\mathbb{N}$ the upper bound
	\begin{equation}
	I_2:=\int_{\tr S^2>T^2}\dv S\left|\la\exp\left(-\frac{1}{m}w_\alpha(USU^\dagger)\right)\ra^m\right|\le m^{\dim W/\alpha}\,S(\alpha, H_{\rm sym},0;0)\left(1-(1-e^{-T^\alpha m_H/m})c_H\right)^{m-1}
	\end{equation}
	for any $T>0$.
\end{lemma}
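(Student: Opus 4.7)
The plan is to factorise $|\la\exp(-w_\alpha(USU^\dagger)/m)\ra|^m$ as the product of $m-1$ factors, bounded uniformly on the tail $\{\tr S^2>T^2\}$, and one leftover factor, which after enlarging its domain of integration back to all of $W$ can be identified with the value of a stable density at the origin. The pivotal elementary estimate, used twice below, is
\[
\bigl|\la\exp(-w_\alpha(USU^\dagger)/m)\ra\bigr|\le\la\exp(-\re w_\alpha(USU^\dagger)/m)\ra,
\]
which follows from the triangle inequality together with the non-negativity of $\re w_\alpha$ in Lemma~\ref{walpha}.

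First I would establish the uniform bound. Polar-decomposing $S=\rho\Theta$ with $\Theta\in\mathbb S_W$ and $\rho=\sqrt{\tr S^2}$, the explicit formula $\re w_\alpha(X)=\int_{\mathbb S^{N-1}}H_\eig(\dv r)|\tr\diag(r)X|^\alpha$ delivers the homogeneity $\re w_\alpha(\rho U\Theta U^\dagger)=\rho^\alpha\re w_\alpha(U\Theta U^\dagger)$ for every $\alpha\in(0,2]$; this is the one place where the $\alpha=1$ case is delicate, since $w_\alpha$ itself is not positively homogeneous because of the logarithm in $\nu_1$, but only the real part is actually needed. Applying Lemma~\ref{lem:walpha-bounds}(4) with $\gamma=\rho^\alpha/m$ then yields $\la\exp(-\re w_\alpha(USU^\dagger)/m)\ra\le 1-(1-e^{-\rho^\alpha m_H/m})c_H$, and since $x\mapsto 1-(1-e^{-x})c_H$ is decreasing, on $\{\rho>T\}$ this is further bounded by $1-(1-e^{-T^\alpha m_H/m})c_H$.

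Next I would feed this uniform bound into $m-1$ of the factors, the elementary estimate above into the last, and enlarge the domain of integration to all of $W$, producing
\[
I_2\le\bigl(1-(1-e^{-T^\alpha m_H/m})c_H\bigr)^{m-1}\int_W\frac{\dv S}{c_W}\la\exp(-\re w_\alpha(USU^\dagger)/m)\ra.
\]
Tonelli swaps the Haar average past the $S$-integral, and the substitution $S\mapsto U^\dagger SU$---which preserves the Lebesgue measure on $W$ by unitary invariance of the Euclidean norm on $\Herm(N)$---eliminates the $U$-dependence, leaving $\int_W c_W^{-1}\dv S\,\exp(-\re w_\alpha(S)/m)=m^{\dim W/\alpha}S(\alpha,H_{\rm sym},0;0)$ by the identity recorded just before the lemma. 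No serious obstacle arises beyond the $\alpha=1$ subtlety already flagged.
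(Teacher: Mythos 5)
Your argument is correct and follows essentially the same route as the paper: replace the modulus by $\la\exp(-\re w_\alpha/m)\ra$, pull out $(m-1)$ copies of the uniform tail bound coming from Lemma~\ref{lem:walpha-bounds}(4) with $\gamma=\rho^\alpha/m$ together with the monotonicity of $x\mapsto 1-(1-e^{-x})c_H$, then enlarge the remaining integral to $W$ and use the unitary invariance of $\dv S$ to identify it with $m^{\dim W/\alpha}S(\alpha,H_{\rm sym},0;0)$. Your explicit remark that only $\re w_\alpha$ is positively $\alpha$-homogeneous (so the $\alpha=1$ logarithm causes no trouble) is a nice clarification that the paper leaves implicit.
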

\begin{proof}
	We start with the upper bound
	\begin{equation}\label{eq.proof.tail2.a}
	\begin{split}
	&\int_{\tr S^2>T^2}\dv S\left|\la\exp\left(-\frac{1}{m}w_\alpha(USU^\dagger)\right)\ra^m\right|\le\int_{\tr S^2>T^2}\dv S\la\exp\left(-\frac{1}{m}\re w_\alpha(USU^\dagger)\right)\ra^m\\
	&\le \sup_{\tr S^2>T^2}\la\exp\left(-\frac{1}{m}\re w_\alpha(USU^\dagger)\right)\ra^{m-1}\int_{W}\dv S\la\exp\left(-\frac{1}{m}\re w_\alpha(USU^\dagger)\right)\ra.
	\end{split}
	\end{equation}
	The first factor can be bounded as follows
	\begin{equation}
	\begin{split}
	\sup_{\tr S^2>T^2}\la\exp\left(-\frac{1}{m}\re w_\alpha(USU^\dagger)\right)\ra^{m-1}=&\sup_{\tilde{s}>T}\sup_{S\in\mathbb{S}_W}\la\exp\left(-\frac{\tilde{s}^\alpha}{m}\re w_\alpha(USU^\dagger)\right)\ra^{m-1}\\
	=&\sup_{\tilde{s}>T}\left(1-(1-e^{-\tilde{s}^\alpha m_H/m})c_H\right)^{m-1}\\
	=&\left(1-(1-e^{-T^\alpha m_H/m})c_H\right)^{m-1}.
	\end{split}
	\end{equation}
	In the second line we have made use of the fourth statement of Lemma~\ref{lem:walpha-bounds} and in the third line we notice that $1-(1-e^{-\tilde{s}^\alpha m_H/m})c_H$ is strictly decreasing in $\tilde s>0$.
	
	The factor in~\eqref{eq.proof.tail2.a} can be simplified by interchanging the integral over $S$ with the one over $U$ as all integrals together are absolutely integrable. Moreover, we make use of the invariance $\dv (U^\dagger S U)=dS$ so that we can absorb the matrix $U$ in the $S$ integration. Hence, the average over $U$ is trivial and we arrive at
	\begin{equation}
	\int_{W}\dv S\la\exp\left(-\frac{1}{m}\re w_\alpha(USU^\dagger)\right)\ra=\int_{W}\dv S\exp\left(-\frac{1}{m}\re w_\alpha(S)\right)=m^{\dim W/\alpha}S(\alpha, H_{\rm sym},0;0).
	\end{equation}
	This finishes the proof.
\end{proof}

Now we are ready to combine the two bounds of $I_2$ and $I_3$ and simplify those to an exponential bound $T^\alpha$.

\begin{lemma}\label{tail_bound}
	Employing the notations of Lemmas~\ref{tail1} and~\ref{tail2}, there are two constants $d_H,D_H>0$, which only depend on the spectral measure $H$, such that for all $1<T^\alpha<m$, one has
	\begin{equation}\label{I2+I3}
	I_2+I_3\le D_H(1+m^{\dim W/\alpha})\exp[-d_H T^\alpha].
	\end{equation}
	In the case $T^\alpha=m^\varepsilon>1$ with $\varepsilon\in(0,1)$ this can be simplified to
	\begin{equation}\label{I2+I3.b}
	I_2+I_3\le \tilde{D}_H\exp[-\tilde{d}_H m^\varepsilon]
	\end{equation}
	with other two constants $\tilde{D}_H,\tilde{d}_H>0$.
\end{lemma}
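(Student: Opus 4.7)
The plan is to combine the two explicit bounds established in Lemma~\ref{tail1} and Lemma~\ref{tail2} and reduce each of them to a pure exponential decay in $T^\alpha$ (up to a factor that is at most polynomial in $m$), after which the sum readily fits the form~\eqref{I2+I3}. Note that $T^\alpha>1<m$ forces $m\ge 2$, which I will use freely.

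For $I_3$ the bound already has the shape (constant + polynomial in $T^\alpha$) times $e^{-m_H T^\alpha}$. Since $T^\alpha>1$ and the exponent $\dim W/\alpha-1$ is a fixed positive number (recall $\dim W\ge 3$ and $\alpha\le 2$), there exists a constant $K_H>0$ such that $(m_H T^\alpha)^{\dim W/\alpha-1}\le K_H\,\exp(m_H T^\alpha/2)$ for all $T^\alpha\ge 1$. Substituting this into the conclusion of Lemma~\ref{tail1} yields $I_3\le K'_H\,\exp(-m_H T^\alpha/2)$ for some $K'_H>0$ depending only on $H$, $\alpha$, and $N$.

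For $I_2$ I would first use $\log(1-x)\le -x$ on $x\in[0,1)$ to obtain
\begin{equation*}
\bigl(1-c_H(1-e^{-T^\alpha m_H/m})\bigr)^{m-1}\le \exp\bigl(-(m-1)c_H(1-e^{-T^\alpha m_H/m})\bigr).
\end{equation*}
Because $T^\alpha<m$, the argument $u:=T^\alpha m_H/m$ lies in $(0,m_H)$, and the elementary inequality $1-e^{-u}\ge u\,e^{-u}\ge u\,e^{-m_H}$ combined with $m\ge 2$ gives $(m-1)c_H(1-e^{-u})\ge \tfrac{1}{2}c_H m_H e^{-m_H}\,T^\alpha$. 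Plugging this into the bound of Lemma~\ref{tail2} produces
\begin{equation*}
I_2\le m^{\dim W/\alpha}\,S(\alpha,H_{\mathrm{sym}},0;0)\,\exp\bigl(-\tfrac{1}{2}c_H m_H e^{-m_H}\,T^\alpha\bigr).
\end{equation*}
Summing the two bounds yields~\eqref{I2+I3} with $d_H:=\min\{m_H/2,\,c_H m_H e^{-m_H}/2\}$ and $D_H$ the maximum of $K'_H$ and $S(\alpha,H_{\mathrm{sym}},0;0)$.

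For the refinement~\eqref{I2+I3.b}, setting $T^\alpha=m^\varepsilon$ with $\varepsilon\in(0,1)$, the prefactor $(1+m^{\dim W/\alpha})$ grows only polynomially in $m$, whereas $\exp(-d_H m^\varepsilon)$ decays stretched-exponentially; for any $\tilde d_H\in(0,d_H)$ the quotient $(1+m^{\dim W/\alpha})\exp\bigl(-(d_H-\tilde d_H)m^\varepsilon\bigr)$ is uniformly bounded in $m\ge 1$, which absorbs the polynomial factor into the exponential and yields a constant $\tilde D_H$. I expect the main obstacle to be the sharp control of the $(m-1)$-th power in $I_2$: extracting linear-in-$T^\alpha$ decay rather than saturation requires using precisely the constraint $T^\alpha<m$ to keep $u$ in a regime where $1-e^{-u}$ is comparable to $u$, and this is where the upper restriction on $T^\alpha$ enters in an essential way.
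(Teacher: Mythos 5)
Your proof is correct and follows essentially the same route as the paper: bound $I_3$ by absorbing the polynomial $(m_H T^\alpha)^{\dim W/\alpha-1}$ into $e^{m_H T^\alpha/2}$, extract linear-in-$T^\alpha$ decay from the $(m-1)$-th power in $I_2$ using the restriction $T^\alpha<m$, and then absorb the polynomial prefactor $1+m^{\dim W/\alpha}$ into the stretched exponential. The only difference is in the $I_2$ step, where the paper maximises $x\mapsto x^{-1}\log(1-(1-e^{-x})c_H)$ whereas you use the cleaner chain $\log(1-x)\le -x$ followed by $1-e^{-u}\ge u e^{-u}\ge u e^{-m_H}$, which is an equally valid (and arguably more transparent) way to reach the same conclusion.
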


The bound $T^\alpha<m$ can be actually extended to any $T^\alpha< k m$ for any $k>0$ without changing this bound. However, we do not need this bound as for proving the second statement of Theorem~\ref{thm2} we will choose a suitable $T$ satisfying $1\ll T^\alpha\ll m$ when $m\gg1$.

\begin{proof}
Let us start with the trivial observation that for any $a_1,a_2,c>0$ there is $b>1$ such that
\begin{equation}\label{inequ.gen}
1+a_1x^c\leq b\exp[a_2 x]\quad{\rm for\ all}\ x\geq0.
\end{equation}
This can be readily seen as the difference $1+a_1x^c- b\exp[a_2 x]$ is continuous in $x\geq0$ and the limit $x\to\infty$ diverges to $-\infty$ while for $x=0$ we have $1-b$. Hence, there must be a maximum of this difference in $x\geq0$ and for a suitably large $b>1$ this maximum has a negative value.

To obtain the first bound~\eqref{I2+I3}, we notice that for $I_3$ the bound in Lemma~\ref{tail1} is almost of the desired form. When pulling the Gamma function out of the bracket and exploiting~\eqref{inequ.gen}, we can approximate
\begin{equation}
\begin{split}
1+\frac{(m_HT^\alpha)^{\dim W/\alpha-1}}{\Gamma[\dim W/\alpha]}\leq&b_H\exp\left[\frac{m_HT^\alpha}{2}\right]
\end{split}
\end{equation}
for some suitably large $b_H>1$, because of $\dim W/\alpha>1$ and $m_HT^\alpha>0$. This means it is
\begin{equation}
I_3\leq  D_{H,3}\exp\left[-\frac{m_HT^\alpha}{2}\right]
\end{equation}
for all $T>0$ with a suitably large constant $D_{H,3}>0$.

For the bound of $I_2$ we focus on the term
\begin{equation}
\begin{split}
\left(1-(1-e^{-T^\alpha m_H/m})c_H\right)^{m-1}=&\left(1-(1-e^{-T^\alpha m_H/m})c_H\right)^{-1}\\
&\times\exp\left[T^\alpha m_H\frac{m}{T^\alpha m_H}\log\left(1-(1-e^{-T^\alpha m_H/m})c_H\right)\right].
\end{split}
\end{equation}
The function $1-(1-e^{-x})c_H$ is never zero for any $x\in[0,m_H]$ as it is strictly decreasing and takes its smallest value at $x=m_H$ with $1-(1-e^{-m_H})c_H>0$ because $c_H\in(0,1)$, see fourth statement of Lemma~\ref{lem:walpha-bounds}. Also the function $x^{-1}\log(1-(1-e^{-x})c_H)$ has a maximum as it is continuous on $x\in(0,1)$ and its limit $x\to0$ diverges to $-\infty$ while for $x\to 1$ it is $\log(1-(1-e^{-1})c_H)<0$. Moreover, it is $1-(1-e^{-1})c_H<1$ so that the value at the maximum must be negative as well. Denoting $d'_H=-m_H\max_{x\in(0,1]}\{x^{-1}\log(1-(1-e^{-x})c_H)\}$, it is
\begin{equation}
I_2\leq \frac{S(\alpha, H_{\rm sym},0;0)}{1-(1-e^{-m_H})c_H}m^{\dim W/\alpha}\exp[-d'_H T^\alpha].
\end{equation}
Choosing ${d}_H=\min\{d'_H, m_H/2\}$ and $D_H$ being the sum of the two constants in front of the exponential functions, we find the claimed bound~\eqref{I2+I3}.

For the bound~\eqref{I2+I3.b} we make use of the inequality~\eqref{inequ.gen} once again, where we identify $a_1=1$, $a_2=d_H/2$ and $c=\dim W/(\alpha\varepsilon)$. This closes the proof.
\end{proof}

\subsection{Bulk estimate}\label{sec:bulk}

Next, we turn to the more involved part of our estimates the integration $I_1$ over the bulk $\tr S^2<T^2$, see~\eqref{eq3.2}. To begin with, we recall the term $D$ from~\eqref{D0} and make an estimate for it.

\begin{lemma}[Estimate of the difference between characteristic functions]\label{lemma_difference}
	Recall that
	\begin{equation}\label{D}
		D:=m\log\la\exp\left(-\frac{w_\alpha(USU^\dagger)}{m}\right)\ra+\la w_\alpha(USU^\dagger)\ra;
	\end{equation}
	see Eq.~\eqref{D0}. Let $m>T>1$ and we assume
	\begin{equation}\label{bound.cond}
	\frac{e^{1/2}T^{\alpha}}{m|\cos(\pi\alpha/2)|}\leq\frac{1}{2}\ {\rm for}\ \alpha\in(0,1)\cup(1,2],\quad \frac{T}{m}\sqrt{1+\left(\frac{\log(T^2)}{\pi}\right)^2}\leq \frac{1}{2}\ {\rm for}\ \alpha=1.
	\end{equation}
	Then there is a constant $K>0$ independent of $T$ and $m$ (but it may dependent on $\alpha$) such that the difference can be uniformly bounded in $S\in\Herm(N)$ satisfying $\tr S^2\leq T^2$ as follows
	\begin{equation}
		|D|\le \frac{\left|\la w_\alpha(USU^\dagger)^2\ra-\la w_\alpha(USU^\dagger)\ra^2\right|}{2m}+K\varepsilon_{T,m},
	\end{equation}
where the error term is given by
\begin{equation}
	\varepsilon_{T,m}=\begin{cases}\displaystyle
		\frac{T^{3\alpha}}{m^2},&\alpha\in(0,1)\cup(1,2],\\\\
		\displaystyle
		\frac{[T\log(e\,T)]^3}{m^2}
		,&\alpha= 1.
	\end{cases}
\end{equation}
\end{lemma}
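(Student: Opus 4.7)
The goal is to Taylor-expand the two transcendental operations in the definition of $D$---first the exponential inside the Haar average, then the outer logarithm---while keeping all remainders uniform in $U\in\U(N)$ and in $S$ on the ball $\tr S^2\le T^2$.

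Set $w:=w_\alpha(USU^\dagger)$. By Lemma~\ref{walpha}(3), the restriction $\tr S^2\le T^2$ yields the deterministic bound $|w|\le M$ uniformly in $U$, with
\begin{equation*}
M := \begin{cases}\dfrac{T^\alpha}{|\cos(\pi\alpha/2)|}, & \alpha\in(0,1)\cup(1,2],\\[2pt] T\sqrt{1+(\log T^2/\pi)^2}, & \alpha=1.\end{cases}
\end{equation*}
The hypothesis~\eqref{bound.cond} is tailored so that $M/m$ is bounded by an absolute constant strictly smaller than $1$ (by $1/(2e^{1/2})$ when $\alpha\ne 1$ and by $1/2$ when $\alpha=1$). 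Consequently both $e^{M/m}$ and $(1-M/m)^{-1}$ are controlled by absolute constants depending only on $\alpha$, which is what makes the ensuing Taylor expansions quantitative.

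First I would apply the complex Taylor remainder $|e^x - 1 - x - x^2/2|\le |x|^3 e^{|x|}/6$ to $x=-w/m$ and average over $U$, yielding
\begin{equation*}
\la \exp(-w/m)\ra \;=\; 1 - \frac{\la w\ra}{m} + \frac{\la w^2\ra}{2m^2} + \la r\ra, \qquad |\la r\ra|\le \frac{M^3\, e^{M/m}}{6\,m^3}.
\end{equation*}
Writing $z:=\la \exp(-w/m)\ra - 1$, the estimates above combined with the bound on $M/m$ give $|z|\le c_0 M/m$ for some universal constant $c_0$, in particular $|z|<1$. I then apply $\log(1+z)=z-z^2/2+\tilde r$ with $|\tilde r|\le |z|^3/(3(1-|z|))$, multiply through by $m$, and collect terms. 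The term $mz$ contributes $-\la w\ra+\la w^2\ra/(2m)$ up to an error of order $M^3/m^2$, while $(m/2)z^2$ contributes $\la w\ra^2/(2m)$ up to a further $O(M^3/m^2)$ error coming from cross terms inside the square. Adding $\la w\ra$ to absorb the linear-in-$m$ term leaves
\begin{equation*}
D \;=\; \frac{\la w^2\ra - \la w\ra^2}{2m} + R, \qquad |R|\le K'\,\frac{M^3}{m^2},
\end{equation*}
for a constant $K'$ depending only on $\alpha$.

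Finally I would translate the bound $M^3/m^2$ into the stated $\varepsilon_{T,m}$. For $\alpha\ne 1$ this is immediate since $M^3/m^2 = T^{3\alpha}/(|\cos(\pi\alpha/2)|^3 m^2)$. For $\alpha=1$ it suffices to observe that $T\ge 1$ gives $1+(2\log T/\pi)^2\le(1+2\log T)^2\le 4[\log(eT)]^2$, whence $M^3\le 8\,T^3[\log(eT)]^3$ and $M^3/m^2\le 8\,\varepsilon_{T,m}$. Absorbing all multiplicative constants into a single $K$ and invoking the triangle inequality $|D|\le |(\la w^2\ra-\la w\ra^2)/(2m)|+|R|$ yields the claim. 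The main obstacle is purely bookkeeping: three separate remainders (from the $e^x$ expansion, the $\log(1+z)$ expansion, and the expansion of $z^2$ around its leading term $\la w\ra^2/m^2$) must be shown to collapse into one uniform $O(M^3/m^2)$ bound after multiplication by $m$, and the condition~\eqref{bound.cond} is precisely what keeps $M/m$ bounded away from $1$ so that each of these remainders has geometric tail.
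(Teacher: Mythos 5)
Your proposal is correct and follows essentially the same strategy as the paper's proof: Taylor-expand $\la\exp(-w/m)\ra$ to second order with a controlled remainder, then Taylor-expand $m\log(1+z)$ around $z=\la\exp(-w/m)\ra-1$, collect terms so that the leading $\la w\ra$ cancels and the variance$/(2m)$ term survives, and bound all remainders by $O(M^3/m^2)$ using~\eqref{bound.cond} to keep $M/m$ bounded away from $1$. The only cosmetic differences are the precise form of the Taylor-remainder estimates (you use $|e^x-1-x-x^2/2|\le|x|^3e^{|x|}/6$ and $|\tilde r|\le|z|^3/(3(1-|z|))$, whereas the paper uses $|R_{e,l}(z)|\le e^{1/2}|z|^l/l!$ and $|R_{\log,l}(z)|\le(1+\log 2)|z|^l$), which are interchangeable under the stated hypothesis.
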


We note that conditions for these estimates can be readily satisfied when choosing $m$ large enough compared to $T>1$.

\begin{proof}
We will make use of the uniform bounds for the remainders of the Taylor expansion of the complex exponential function 
\begin{equation}
e^{z}=\sum_{j=0}^{l-1} \frac{z^j}{j!}+R_{e,l}(z)
\end{equation}
and for the complex logarithm
\begin{equation}
\log(1-z)=-\sum_{j=1}^{l-1} \frac{z^j}{j}+R_{\log,l}(z).
\end{equation}
When $|z|\leq 1/(2e^{1/2})<1/2$. Those estimates are
\begin{equation}\label{exp.Taylor}
|R_{e,l}(z)|=\left|e^{z}-\sum_{j=0}^{l-1} \frac{z^j}{j!}\right|\leq |z|^l\sum_{j=0}^\infty\frac{|z|^j}{(j+l)!}\leq |z|^l\sum_{j=0}^{\infty}\frac{(1/2)^j}{j!\,l!}\leq e^{1/2}\frac{|z|^l}{l!}
\end{equation}
and
\begin{equation}\label{log.Taylor}
|R_{\log,l}(z)|=\left|\log(1-z)+\sum_{j=1}^{l-1} \frac{z^j}{j}\right|\leq |z|^l\sum_{j=0}^\infty\frac{|z|^j}{j+l}\leq |z|^l\left(1+\sum_{j=1}^{\infty}\frac{(1/2)^j}{j}\right)\leq (1+\log2)|z|^l.
\end{equation}

Considering  the first term in~\eqref{D}, we notice that the quantity $w_\alpha(USU^\dagger)/m$  is bounded via~\eqref{kappa}. Specially for $\tr S^2<T^2$ we have
\begin{equation}\label{eq.diff.p1}
\left|\frac{w_\alpha(USU^\dagger)}{m}\right|\leq\begin{cases}\displaystyle
			\frac{T^{\alpha}}{m|\cos(\pi\alpha/2)|},&\alpha\in(0,1)\cup(1,2],\\\\\displaystyle
			\frac{T}{m}\sqrt{1+\left(\frac{\log(T^2)}{\pi}\right)^2}
			,&\alpha= 1,
			\end{cases}
\end{equation}
and by our assumptions~\eqref{bound.cond} it is smaller than $1/2$ in both cases. Thence, it is
\begin{equation}\label{eq.3.48}
	\begin{split}
\la\exp\left(-\frac{w_\alpha(USU^\dagger)}{m}\right)\ra&=1-\frac{\la w_\alpha(USU^\dagger)\ra}{m}+\frac{\la w_\alpha(USU^\dagger)^2\ra}{2m^2}+\la R_{e,3}\left(-\frac{w_\alpha(USU^\dagger)}{m}\right)\ra\\
&=1-\frac{\la w_\alpha(USU^\dagger)\ra}{m}+\la R_{e,2}\left(-\frac{w_\alpha(USU^\dagger)}{m}\right)\ra\\
&=1+\la R_{e,1}\left(-\frac{w_\alpha(USU^\dagger)}{m}\right)\ra
\end{split}
\end{equation}
with
\begin{equation}\label{eq.diff.p2}
\left|\la R_{e,l}\left(-\frac{w_\alpha(USU^\dagger)}{m}\right)\ra\right|\leq\begin{cases}\displaystyle
			\frac{e^{1/2}}{l!}\left[\frac{T^{\alpha}}{m|\cos(\pi\alpha/2)|}\right]^l,&\alpha\in(0,1)\cup(1,2],\\\\
			\displaystyle
			\frac{e^{1/2}}{l!}\left[\frac{T}{m}\sqrt{1+\left(\frac{\log(T^2)}{\pi}\right)^2}\right]^l
			,&\alpha= 1,
			\end{cases}
\end{equation}
for any $l\in\mathbb{N}$.

Next we take the logarithm and make use of its Taylor expansion~\eqref{log.Taylor} and the expansions given in~\eqref{eq.3.48}, which leads to
\begin{equation}
\begin{split}
m\log\la\exp\left(-\frac{w_\alpha(USU^\dagger)}{m}\right)\ra
=&-m\left(1-\la\exp\left(-\frac{w_\alpha(USU^\dagger)}{m}\right)\ra\right)-\frac{m}{2}\left(1-\la\exp\left(-\frac{w_\alpha(USU^\dagger)}{m}\right)\ra\right)^2\\&+m\, R_{\log,3}\left(1-\la\exp\left(-\frac{w_\alpha(USU^\dagger)}{m}\right)\ra\right)
\\=&-\la w_\alpha(USU^\dagger)\ra+\frac{\la w_\alpha(USU^\dagger)^2\ra}{2m}+m\la R_{e,3}\left(-\frac{w_\alpha(USU^\dagger)}{m}\right)\ra\\
&-\frac{m}{2}\left[\frac{\la w_\alpha(USU^\dagger)\ra}{m}-\la R_{e,2}\left(-\frac{w_\alpha(USU^\dagger)}{m}\right)\ra\right]^2\\
&+m\, R_{\log,3}\left(-\la R_{e,1}\left(-\frac{w_\alpha(USU^\dagger)}{m}\right)\ra\right).
\end{split}
\end{equation}
This means the modulus of the difference is bounded from above by
\begin{equation}
\begin{split}
|D|\leq&\frac{\left|\la w_\alpha(USU^\dagger)^2\ra-\la w_\alpha(USU^\dagger)\ra^2\right|}{2m}+m\left|\la R_{e,3}\left(-\frac{w_\alpha(USU^\dagger)}{m}\right)\ra\right|\\
&+\left|\la w_\alpha(USU^\dagger)\ra\la R_{e,2}\left(-\frac{w_\alpha(USU^\dagger)}{m}\right)\ra\right|+\frac{m}{2}\left|\la R_{e,2}\left(-\frac{w_\alpha(USU^\dagger)}{m}\right)\ra\right|^2\\
&+(1+\log2)m\left|\la R_{e,1}\left(-\frac{w_\alpha(USU^\dagger)}{m}\right)\ra\right|^3
\end{split}
\end{equation}
due to
\begin{equation}
\left|\la R_{e,1}\left(-\frac{w_\alpha(USU^\dagger)}{m}\right)\ra\right|\leq e^{1/2}\la\left|-\frac{w_\alpha(USU^\dagger)}{m}\right|\ra\leq \frac{1}{2}
\end{equation}
by our assumptions. The sum $s_3$ of three of the four correction terms
\begin{equation}
	\begin{split}
s_3:=&m\left|\la R_{e,3}\left(-\frac{w_\alpha(USU^\dagger)}{m}\right)\ra\right|+\left|\la w_\alpha(USU^\dagger)\ra\la R_{e,2}\left(-\frac{w_\alpha(USU^\dagger)}{m}\right)\ra\right|\\&+\left(1+\log2\right)m\left|\la R_{e,1}\left(-\frac{w_\alpha(USU^\dagger)}{m}\right)\ra\right|^3,
\end{split}
\end{equation}
 can be bounded by
\begin{equation}
s_3\leq\widetilde{K}_1\begin{cases}\displaystyle
			\frac{T^{3\alpha}}{m^2},&\alpha\in(0,1)\cup(1,2],\\\\
			\displaystyle
			\frac{T^3}{m^2}\left[1+\left(\frac{\log(T^2)}{\pi}\right)^2\right]^{3/2}
			,&\alpha= 1,
			\end{cases}
\end{equation}
for a suitable $\widetilde{K}_1>0$ because of~\eqref{eq.diff.p1} and~\eqref{eq.diff.p2}. The remaining fourth term has a similar bound,
\begin{equation}
\frac{m}{2}\left|\la R_{e,2}\left(-\frac{w_\alpha(USU^\dagger)}{m}\right)\ra\right|^2\leq\begin{cases}\displaystyle
			\frac{e\, T^{4\alpha}}{8m^3|\cos(\pi\alpha/2)|^4}\leq \frac{e^{1/2}\, T^{3\alpha}}{16m^2|\cos(\pi\alpha/2)|^3},&\hspace*{-1cm}\alpha\in(0,1)\cup(1,2],\\\\
			\displaystyle
			\frac{e\, T^4}{8m^3}\left[1+\left(\frac{\log(T^2)}{\pi}\right)^2\right]^{2}\leq\frac{e^{1/2}\, T^3}{16m^2}\left[1+\left(\frac{\log(T^2)}{\pi}\right)^2\right]^{3/2}
			,&\alpha= 1.
			\end{cases}
\end{equation}
	By noticing that  there is a $T$-independent constant  $\widetilde{K}_2>0$ such that $1+\left(\log(T^2)/\pi\right)^2\leq \widetilde{K}_2\log^2(e\,T)$ for $T>1$, we finish the proof also for $\alpha=1$.
\end{proof}

\subsection{Proof of Theorem~\ref{thm2} statement (2)}\label{sec:proof.thm2.2}

\begin{proof} 
Our strategy is to show that the leading order term in $m$ of the difference $|p_m(X)-p_\infty(X)|$ is given uniformly by $1/m$ times a constant. Yet, the remainder must be estimated as well such that it cannot contribute when taking the supremum. For this purpose, we start with~\eqref{eq3.2}, meaning
	\begin{equation}
	\begin{split}
		|p_m(X)-p_\infty(X)|=I_1+I_2+I_3
	\end{split}
	\end{equation}
	where $I_1, I_2, I_3$ are integrals depending on a truncation parameter $T$. A combined estimate of the second and third term is given in~\eqref{I2+I3.b} which gives an exponential suppression in $T$. So it remains to bound the first term. 
	
	Thence, we need to find a suitable $T$ such that $I_1$ can be bounded as claimed. For each $\alpha\in(0,2]$, we  choose an arbitrary $\varepsilon\in(0,\min(\alpha,1/4))$ and set $T^\alpha=m^\varepsilon>1$. One can check that in order to have the requirements~\eqref{bound.cond} for $T^\alpha=m^\varepsilon>2$ it is sufficient to have
	\begin{equation}\label{3.57}
		m\ge \begin{cases}\displaystyle\left(\frac{2e^{1/2}}{|\cos(\pi\alpha/2)|}\right)^{1/(1-\varepsilon)},&\alpha\in(0,1)\cup(1,2],\\\\\displaystyle
			3,&\alpha=1.
			\end{cases}
	\end{equation}
	The inequality for $\alpha=1$, follows from the fact that the function
	\begin{equation}
	q_\varepsilon(m)=\left[m^{\varepsilon-1}\sqrt{1+\frac{4\varepsilon^2}{\pi^2}\log^2(m)}\right]^2=m^{2\varepsilon-2}\left(1+\frac{4\varepsilon^2}{\pi^2}\log^2(m)\right)
	\end{equation}
	is strictly monotonously decreasing in $\log(m)$ for any $0<\varepsilon<1/4$ and $m\geq 1$, and $q_\varepsilon(3)$ is strictly monotonously increasing in $\varepsilon$ with a maximal value $q_{1/4}(3)\approx 0.19<1/4$. We cannot go down to $m=1$ or $m=2$ since $q_{1/4}(1),q_{1/4}(2)>1/4$ which would violate the requirements of Lemma~\ref{lemma_difference}.
	
	In conclusion, we define the $\varepsilon$-independent bound
	\begin{equation}
		m^{(\alpha)}=\begin{cases}
			\displaystyle\frac{2e^{1/2}}{|\cos(\pi\alpha/2)|},&\alpha\in(0,1)\cup(1,2],\\\\\displaystyle
			3,&\alpha=1,
			\end{cases}
	\end{equation}
	and choose $m\ge m^{(\alpha)}$, such that~\eqref{3.57} and, hence, \eqref{bound.cond} are satisfied (note that $\varepsilon<1/4$). In addition, Lemma~\ref{lemma_difference} also requires that $m>T$, which has already been satisfied because $\varepsilon<\alpha$.
	
	Moreover, $p_j$ (for any fixed $j$) and $p_\infty$ are bounded so that there is a constant $C>0$ depending only on $\alpha$ with
	\begin{equation}
		\sup_{X\in W}|p_j(X)-p_\infty(X)|\le\frac{C}{j}\text{ for all }1<j\le m^{(\alpha)}.
	\end{equation}
	Therefore, it is sufficient to concentrate only on the case $m\ge m^{(\alpha)}$. In particular, this consideration allows us to relax the bound to $m\geq 1$ as written in Theorem~\ref{thm2} statement (2) since we only exclude a finite number of possible values of $m$ when choosing $m> m^{(\alpha)}$. This exclusion may only change the constant $C$ in~\eqref{thm.bound} but the  $1/m$-bound remains the same.

	We can exploit the inequality $|e^z-1|\leq|z|e^{|z|}$ for $z\in\R$ for the difference $D$ of~\eqref{D} yielding
	\begin{equation}\label{3.61}
	I_1\le\int_{\tr S^2\le T^2}\frac{\dv S}{c_W}|D|e^{|D|}\exp\left(-\la \re w_\alpha(USU^\dagger)\ra\right).
	\end{equation}	
	We will bound the integral on the right hand side. The first inequality we make use of is
	\begin{equation}\label{3.62}
		\la \re w_\alpha(USU^\dagger)\ra\geq m_H(\tr S^2)^{\alpha/2}
	\end{equation}
	which is due to Lemma~\ref{lem:walpha-bounds} statement~(3). This gives a bound for the term $\exp\left(-\la \re w_\alpha(USU^\dagger)\ra\right)$. 
	
	Next we want to bound the term $|D|$ for which we need, see~\eqref{kappa},
	\begin{equation}\label{3.63}
		\frac{\left|\la w_\alpha(USU^\dagger)^2\ra-\la w_\alpha(USU^\dagger)\ra^2\right|}{2m}\leq \begin{cases}\displaystyle
			\frac{(\tr S^2)^{\alpha/2}}{m|\cos(\pi\alpha/2)|} ,&\alpha\in(0,1)\cup(1,2],\\\\
			\displaystyle
			\frac{1}{m}\sqrt{\tr S^2}\sqrt{1+\left(\frac{\log(\tr S^2)}{\pi}\right)^2}
			,&\alpha= 1.
		\end{cases}	
	\end{equation}
	Thus it is
		\begin{equation}\label{3.64}
		\begin{split}
			|D|\le&\frac{\left|\la w_\alpha(USU^\dagger)^2\ra-\la w_\alpha(USU^\dagger)\ra^2\right|}{2m}+K_2\begin{cases}\displaystyle
				\frac{T^{3\alpha}}{m^2},&\alpha\in(0,1)\cup(1,2],\\\\
				\displaystyle
				\frac{[T\log(T)]^3}{m^2}
				,&\alpha= 1,
			\end{cases}\\
			\le&\begin{cases}\displaystyle
				\frac{(\tr S^2)^{\alpha/2}}{m|\cos(\pi\alpha/2)|}+\frac{K_2}{m^{2-3\varepsilon}} ,&\alpha\in(0,1)\cup(1,2],\\\\
				\displaystyle
				\frac{1}{m}\sqrt{\tr S^2}\sqrt{1+\left(\frac{\log(\tr S^2)}{\pi}\right)^2}+\frac{K_2\,\varepsilon ^3\log^3 m}{m^{2-\varepsilon}}
				,&\alpha= 1,
			\end{cases}	
		\end{split}
	\end{equation}
	where the first inequality is due to Lemma~\ref{lemma_difference}. The second one is due to a substitution of~\eqref{3.63} into the first term and exploiting our choice $T^\alpha=m^{\varepsilon}$ for the second term. Recall that the requirements of Lemma~\ref{lemma_difference} have been resolved by our choice of $m\geq m^{(\alpha)}$ and $\varepsilon\in(0,\min(\alpha,1/4))$.
	
	The right hand side of the inequality~\eqref{3.64} can be further simplified. Starting from~\eqref{3.63} in combination with the inequality $\tr S^2\le T^2$ (domain of the corresponding integral) and $T^\alpha=m^\varepsilon$, we have
	\begin{equation}\label{3.63a}
		\frac{\left|\la w_\alpha(USU^\dagger)^2\ra-\la w_\alpha(USU^\dagger)\ra^2\right|}{2m}\leq \begin{cases}\displaystyle
			\frac{m^{\varepsilon-1}}{|\cos(\pi\alpha/2)|} ,&\alpha\in(0,1)\cup(1,2],\\\\
			\displaystyle
			 \sqrt{\tilde K_2}\,m^{\varepsilon-1}\log(e\, m^\varepsilon)
			,&\alpha= 1.
		\end{cases}	
	\end{equation}
 	In the $\alpha=1$ case we once again made use of $1+(\log(T^2)/\pi)^2\le\tilde K_2\log^2(e\, T)$. Substituting this into~\eqref{3.64}, one can see that $|D|$ is bounded by a decreasing function in the variable $m$. Thus when $m\ge 1$, one can conclude that for all $\alpha\in(0,2]$, there exists a constant $K_1$ independent of $m$ such that
 	\begin{equation}\label{3.66}
 		|D|\le K_1.
 	\end{equation}
	
	Finally, we substitute~\eqref{3.62},~\eqref{3.64} and~\eqref{3.66} into~\eqref{3.61} and find in the case $\alpha\in(0,1)\cup(1,2]$
	\begin{equation}
	\begin{split}
	I_1\le& \int_{\tr S^2<T^2}\frac{\dv S}{c_W} \left(\frac{(\tr S^2)^{\alpha/2}}{m|\cos(\pi\alpha/2)|} +\frac{K_2}{m^{2-3\varepsilon}}\right)\exp(-m_H(\tr S^2)^{\alpha/2}+K_1)\\
	\le&\frac{1}{m} \int_{W}\frac{\dv S}{c_W} \left(\frac{2(\tr S^2)^{\alpha/2}}{|\cos(\pi\alpha/2)|} +K_2\right)\exp(-m_H(\tr S^2)^{\alpha/2}+K_1).
	\end{split}
	\end{equation}
	In the second inequality we exploited $0<\varepsilon<1/4$ and $m\ge 1$. The integral is  $m$-independent and absolutely integrable such that it yields a constant only depending on the spectral measure $H$ and the stability  parameter $\alpha$.
	
	Very similarly we compute for $\alpha=1$
	\begin{equation}
	\begin{split}
	I_1\le& \int_{\tr S^2<T^2}\frac{\dv S}{c_W} \left(\frac{1}{m}\sqrt{\tr S^2}\sqrt{1+\left(\frac{\log(\tr S^2)}{\pi}\right)^2 }+\frac{\varepsilon^3K_2\log^3(m)}{m^{2-3\varepsilon}}\right)\exp\left(-m_H\sqrt{\tr S^2}+K_1\right)\\
	\overset{0<\varepsilon<1/4}{\le}&\frac{1}{m} \int_{W}\frac{\dv S}{c_W} \left(\sqrt{\tr S^2}\sqrt{1+\left(\frac{\log(\tr S^2)}{\pi}\right)^2 } +2K_2\right)\exp\left(-m_H\sqrt{\tr S^2}+K_1\right),
	\end{split}
	\end{equation}
	where we have used that $\log^3(m)/(64 m^{1/4})<2$ for any $m\ge 1$. Also this integral is absolutely convergent and $m$-independent. This closes the proof.
\end{proof}

\section{Proof of Theorem~\ref{thm2} statement (3)}\label{s.proof3}

The strategy of proving the third statement of Theorem~\ref{thm2} is the following. For each $\alpha$ one wants to find  
\begin{enumerate}
	\item a specific spectral measure $H$,
	\item and a specific $m$-independent test function $\varphi: W\mapsto\mathbb{C}$  which is bounded and absolutely integrable,
\end{enumerate} 
such that one can show an expansion as follows
\begin{equation}\label{eq.4.1}
\left|\int_W dX \varphi(X)(p_m(X)-p_\infty(X))\right|=\frac{c}{m}+o(m^{-1})
\end{equation}
for some $c>0$ in the limit $m\to\infty$.  Now by the following inequality
\begin{equation}
\left|\int_W dX \varphi(X)(p_m(X)-p_\infty(X))\right|\leq\sup_{Y\in W}|p_m(Y)-p_\infty(Y)| \int_W dX |\varphi(X)|,
\end{equation}
together with the fact that $ \int_W dX |\varphi(X)|$ is assumed to be finite and non-zero, one is able to give a lower bound for the supremum on the right hand side. Certainly, it is sufficient to find only one spectral measure and one test function satisfying these conditions, as then the supremum is bounded from below also by the rate of convergence $1/m$. 

For the particular spectral measure $H_\eig$ on $\mathbb{S}^{N-1}$ from which we can construct $H$ on $\mathbb{S}_{\Herm(N)}$ via the conjugation $R=U\diag(r)U^\dagger$ with a Haar distributed unitary matrix $U\in\U(N)$, we choose the orbital measure
\begin{equation}
	H_{\eig}^{(t)}(\dv r)=\frac{1}{2N}\sum_{j=1}^N[t\,\delta(r-e^{(j)})+(1-t)\delta(r+e^{(j)})]\dv r\qquad{\rm with }\ t\in[0,1],
\end{equation}
where $e^{(j)}$ is a vector with zero entries everywhere except at its $j$-th entry where it is $1$. In other words, the corresponding spectral measure $H^{(t)}$ is the convex linear combination of two uniform measures on the $\U(N)$-orbits of the rank-one matrices $\diag(\pm1,0,\ldots,0)$.  Since the trace of $U\diag(\pm1,0,\ldots,0)U^\dagger$ is not vanishing, the support of the corresponding invariant stable distribution will be $W=\Herm(N)$.

With this spectral measure we are able to state the following lemma, which is equivalent to Theorem~\ref{thm2} statement (3).

\begin{lemma}[Theorem~\ref{thm2} statement (3)]\label{lb.lemma}
	Let $Z_j$ be independent and identical copies of the random matrix corresponding to $S(\alpha,H^{(t)}_\eig,0)$, and $U_j\in\U(N)$ be independent Haar distributed random matrices. We denote by $q_m$ the distribution of the quantity
	\begin{equation}
		Y_m=\frac{1}{m^{1/\alpha}}\sum_{j=1}^mU_jZ_jU_j^\dagger-\frac{2t_0\log m}{\pi}I_N\ind_{\alpha=1}
	\end{equation}
	and $q_\infty$ the distribution of $S(\alpha,H^{(0)},0)$. Then
	\begin{equation}\label{lb.lemma.eqn}
		\sup_{X\in\Herm(N)}|q_m(X)-q_\infty(X)|=O(1/m).
	\end{equation}
\end{lemma}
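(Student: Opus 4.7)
The plan follows the strategy sketched immediately before the lemma statement: I will exhibit a specific bounded, absolutely integrable test function $\varphi:W\to\mathbb{C}$ for which
$$\int_W \varphi(X)\bigl(q_m(X)-q_\infty(X)\bigr)dX=\frac{c}{m}+o(m^{-1})$$
with $c>0$. Combined with the elementary inequality
$$\left|\int_W \varphi(X)\bigl(q_m(X)-q_\infty(X)\bigr)dX\right|\le\|\varphi\|_{L^1}\cdot\sup_{X\in\Herm(N)}|q_m(X)-q_\infty(X)|$$
and the upper bound from Theorem~\ref{thm2}(2), this forces the supremum to be of exact order $1/m$, which is the content of the lemma.

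The concrete choice that makes the signs cooperate is $t=1/2$ together with $\varphi=q_\infty$. Symmetrization of $H^{(t)}_\eig$ removes the imaginary part of $\nu_\alpha$, and for $\alpha=1$ it also kills the logarithmic shift since then $t_0=0$; what remains is the real-valued formula $w_\alpha(X)=\frac{1}{N}\sum_{j=1}^N|X_{jj}|^\alpha$. Hence $\hat q_\infty(S)=\exp(-\la w_\alpha(USU^\dagger)\ra)$ is real, positive, and even on $W=\Herm(N)$; in particular $q_\infty(X)=q_\infty(-X)$, so that $\hat\varphi(-S)=\hat q_\infty(S)$. The variance $v(S)=\la w_\alpha(USU^\dagger)^2\ra-\la w_\alpha(USU^\dagger)\ra^2$ is real and non-negative, and strictly positive whenever $S$ has at least two distinct eigenvalues (since then $u^\dagger Su$ is non-deterministic for a Haar-distributed column $u$ of $U$). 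Moreover $q_\infty$ is bounded and in $L^1$ by Theorem~\ref{trace_traceless}, so Parseval is available.

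By Parseval's identity and a refinement of Lemma~\ref{lemma_difference} in which the leading term of the Taylor expansion is retained rather than bounded away, one obtains
$$\hat q_m(S)-\hat q_\infty(S)=\hat q_\infty(S)\left[\frac{v(S)}{2m}+O\!\left(\frac{(\tr S^2)^{3\alpha/2}}{m^2}\right)\right]$$
uniformly for $S$ in the bulk $\tr S^2\le T^2$ with $T^\alpha=m^\varepsilon$ and $\varepsilon\in(0,\min(\alpha,1/4))$ (the $\alpha=1$ case picks up an additional logarithmic factor, handled exactly as in Section~\ref{s.proof}). Applying Parseval yields
$$\int_W q_\infty(X)\bigl(q_m(X)-q_\infty(X)\bigr)dX=\frac{1}{2mc_W}\int_W \hat q_\infty(S)^2\,v(S)\,dS+R_m,$$
and the leading coefficient $c:=\frac{1}{2c_W}\int_W \hat q_\infty(S)^2 v(S)\,dS$ is strictly positive because the non-negative integrand is strictly positive on the open set of $S$ with non-degenerate spectrum.

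The main obstacle is showing $R_m=o(m^{-1})$ uniformly in $m$. This is done by the same bulk/tail splitting as in the proof of Theorem~\ref{thm2}(2). On the bulk, the bound above contributes $O\bigl(m^{-2}\int_W|\hat q_\infty(S)|^2(\tr S^2)^{3\alpha/2}dS\bigr)$, and the integral converges absolutely because the exponential decay $|\hat q_\infty(S)|\le\exp(-m_H(\tr S^2)^{\alpha/2})$ from Lemma~\ref{lem:walpha-bounds} dominates the polynomial weight. The tail contribution from $\tr S^2>T^2$ with $T^\alpha=m^\varepsilon$ is super-polynomially small by Lemma~\ref{tail_bound}. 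Collecting these estimates yields the expansion $c/m+o(m^{-1})$ with $c>0$, which combined with the elementary inequality above completes the proof.
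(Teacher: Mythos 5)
Your proposal is correct and takes a genuinely different route from the paper, one that is arguably cleaner in its positivity argument.

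The paper constructs a specialised test function $\varphi_N^{(\varepsilon)}$ built from permanents and Vandermonde determinants whose Fourier transform is compactly supported in a thin shell around the orbit of the rank-one matrix $S_0=\diag(1,0,\ldots,0)$. This compact support makes the remainder estimate in Lemma~\ref{lb.lemma1} immediate (the bound $|R_{e,2}(D)|\le k/m^2$ holds uniformly on the shell), and the $\varepsilon\to0$ limit collapses the integral to $v(S_0)$. The price is that the paper must then show $v(S_0)\neq0$ by picking $t$ to avoid degenerate cancellations (different constraints for $\alpha\neq1$ and $\alpha=1$), and $v(S_0)$ is in general complex, requiring the decomposition~\eqref{complex_variance}. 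Your approach takes $t=1/2$, which makes $w_\alpha$ real-valued, $\hat q_\infty$ real and positive, and $v(S)$ a genuine non-negative variance; you then test against $\varphi=q_\infty$ itself and conclude $c=\frac{1}{2c_W}\int\hat q_\infty^2 v>0$ from non-negativity plus strict positivity on a positive-measure set. This sidesteps the whole $\varphi_N^{(\varepsilon)}$ machinery, but in exchange your remainder analysis needs the bulk/tail splitting (since $\hat q_\infty$ is not compactly supported); you do invoke this correctly, and the polynomial growth of $v(S)$ and $v(S)^2$ is indeed dominated by the exponential decay of $\hat q_\infty^2$ coming from Lemma~\ref{lem:walpha-bounds}.

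One small inaccuracy worth flagging: for $\alpha=1$ and $t=1/2$ one has $w_1(USU^\dagger)=\tfrac{1}{N}\sum_j|(USU^\dagger)_{jj}|$, which is deterministic (equal to $\tr S/N$) whenever $S$ is positive or negative semidefinite, not only when $S$ is a multiple of $I_N$. So ``$v(S)>0$ whenever $S$ has at least two distinct eigenvalues'' is false at $\alpha=1$; indeed $v(S_0)=0$ there, which is precisely why the paper was forced to take $t\neq1/2$ for $\alpha=1$. Your argument survives because it only requires $v>0$ on \emph{some} positive-measure set, and the open set of $S$ with eigenvalues of mixed sign suffices; but the justification should be stated this way rather than via the spectrum being non-degenerate. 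As a minor observation, the stated expansion $O((\tr S^2)^{3\alpha/2}/m^2)$ for the remainder is not quite what $R_{e,2}(D)$ gives (there is also a $v(S)^2/m^2\sim(\tr S^2)^{2\alpha}/m^2$ contribution), though this is immaterial since both are integrable against $\hat q_\infty^2$.
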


In the rest of this section we will firstly give a specific choice for the test function and give its properties  in Lemma~\ref{varphi}, and then show in Lemma~\ref{lb.lemma1} that ~\eqref{eq.4.1} is true for some $c\ge 0$. Finally in Lemma~\ref{lb.lemma2} we show that the case $c=0$ can be excluded with suitable choices of the parameter $t$ in the spectral measure and the test function.

\subsection{The test function $\varphi_N^{(\varepsilon)}$}
For the test functions we choose the unitarily invariant family
\begin{equation}
\varphi_{N}^{(\varepsilon)}(X):=\frac{\Delta(x)}{N!\varepsilon^{(N-1)(N-2)}}\Delta(-\partial_x)\,{\rm perm}[\zeta_{\varepsilon,1}(x_a),\overbrace{\zeta_{\varepsilon,\varepsilon}(x_a),\ldots,\zeta_{\varepsilon,\varepsilon}(x_a)}^{N-1\ {\rm times}}]_{a=1,\ldots,N}
\end{equation}
with
\begin{equation}
\zeta_{\varepsilon,\kappa}(x):=\frac{\pi^{2N} e^{-i\kappa x}\cos(\varepsilon x)}{\prod_{k=1}^N\left(\pi^2-4\varepsilon^2x^2/(2k-1)^2\right)},
\end{equation}
where $x_1,\ldots,x_N$ are the eigenvalues of $X$, ${\rm perm}$ is the permanent, and we have used the Vandermonde determinant
\begin{equation}
\Delta(x)=\det[x_a^{b-1}]_{a,b=1,\ldots,N}=\prod_{1\leq a<b\leq N}(x_b-x_a).
\end{equation}
The Vandermonde determinant of the partial derivatives $\Delta(-\partial_x)$ is defined similarly via replacing $x_a$ by $-\partial_{x_a}$. Note $\zeta_{\varepsilon,\kappa}$ is only a slight modification of the regularisation function used in~\cite[Eq.~(2.41)]{KK16}. Its properties are also discussed in the same paper while we restate them here.

\begin{lemma}\label{varphi}
	The function $\varphi_N^{(\varepsilon)}$ has the following properties:
	\begin{enumerate}
		\item it is bounded, continuous, and absolutely integrable;
		\item  its Fourier transform is equal to
		\begin{equation}
		\hat\varphi_{N}^{(\varepsilon)}(S)=\frac{1}{N!\varepsilon^{(N-1)(N-2)}}{\rm perm}\bigl[\hat\zeta_{\varepsilon,1}(s_a),\overbrace{\hat\zeta_{\varepsilon,\varepsilon}(s_a),\ldots,\hat\zeta_{\varepsilon,\varepsilon}(s_a)}^{N-1\ {\rm times}}\bigl]_{a=1,\ldots,N}\geq0
		\end{equation}
		where $(s_1,\ldots,s_N)\in\mathbb{R}$ are the eigenvalues of $S\in\Herm(N)$ and we have employed
		\begin{equation}\label{hatzeta}
		\hat\zeta_{\varepsilon,\kappa}(s)=\frac{2\pi c}{\varepsilon}\cos^{2N-1}\left(\frac{\pi}{2\varepsilon}(s-\kappa)\right)\ind_{s\in(\kappa-\varepsilon,\kappa+\varepsilon)}\quad {\rm with}\quad c^{-1}=\int_{-1}^1\dv u\cos^{2N-1}\left(\frac{\pi}{2}u\right);
		\end{equation}
		\item	 the integral of a bounded, continuous and unitarily invariant function $f:\Herm(N)\mapsto\mathbb{C}$ against the  Fourier transform $\hat\varphi_{N}^{(\varepsilon)}$ satisfies the limit
					\begin{equation}
					\begin{split}
					\lim_{\varepsilon\to0}\int_{\Herm(N)}\dv S f(S)\hat\varphi_{N}^{(\varepsilon)}(S)=&\tilde{c} f(\diag(1,0,\ldots,0))
					\end{split}
					\end{equation}
					with the positive constant
					\begin{equation}\label{tilde_c}
					\tilde{c}=\left(\prod_{j=1}^N\frac{\pi^{j-1}}{j!}\right)\int_{(-1,1)^N} \dv s_1\cdots\dv s_N \Delta^2(s_2,\ldots,s_N)\prod_{j=1}^N 2\pi c \,\cos^{2N-1}\left(\frac{\pi}{2}s_j\right)>0.
					\end{equation}
	\end{enumerate}
\end{lemma}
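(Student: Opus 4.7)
My plan is to tackle the three parts in sequence, with part~(2) forming the technical heart and main obstacle.

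\emph{Part (1): regularity.} The apparent singularities of $\zeta_{\varepsilon,\kappa}$ at $x=\pm(2k-1)\pi/(2\varepsilon)$ are removable, since $\cos(\varepsilon x)$ vanishes there with the same multiplicities; hence $\zeta_{\varepsilon,\kappa}$ is smooth on $\R$ with all derivatives bounded and $O(|x|^{-2N})$ at infinity. The crucial structural observation is that $\Delta(-\partial_x)$ is antisymmetric in $(x_1,\dots,x_N)$ while $\mathrm{perm}[\cdots]$ is symmetric, so $\Delta(-\partial_x)\mathrm{perm}[\cdots]$ is antisymmetric and hence divisible by the Vandermonde $\Delta(x)$; consequently the eigenvalue-function factors as $\tilde\varphi_N^{(\varepsilon)}(x)=\Delta(x)^2 Q(x)/(N!\varepsilon^{(N-1)(N-2)})$ with $Q$ smooth and symmetric. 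This immediately gives boundedness and continuity of $\varphi_N^{(\varepsilon)}$ on $\Herm(N)$, and absolute integrability reduces via the Weyl integration formula to bounding $\int\Delta(x)^4|Q(x)|\dv x$, which is controlled by the polynomial decay of the $\zeta$'s combined with the cancellations built into the Vandermonde-of-derivatives structure, as in~\cite{KK16}.

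\emph{Part (2): Fourier transform.} Apply Weyl's integration formula followed by the Harish-Chandra--Itzykson--Zuber integral
\[
\int_{\U(N)}\mu_{\rm Haar}(\dv U)\,e^{i\tr(U\diag(x)U^\dagger S)}=\Bigl(\prod_{j=0}^{N-1}j!\Bigr)\frac{\det[e^{ix_a s_b}]_{a,b=1}^N}{i^{N(N-1)/2}\Delta(x)\Delta(s)}
\]
to express $\hat\varphi_N^{(\varepsilon)}(S)$ as an $x$-integral of the permanent against $\det[e^{ix_as_b}]$, after the explicit $\Delta(x)$ factors cancel. Integration by parts then lets $\Delta(-\partial_x)$ act on the determinant; the pivotal identity is $\Delta(-\partial_x)\det[e^{ix_as_b}]=\Delta(-is)\,\mathrm{perm}[e^{ix_as_b}]$, obtained by expanding both sides and noting that $\Delta(-is_{\tau(1)},\ldots,-is_{\tau(N)})=\mathrm{sgn}(\tau)\Delta(-is)$ cancels the $\mathrm{sgn}(\tau)$ from the determinant. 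This simultaneously cancels $\Delta(s)$ in the HCIZ denominator and converts the determinant to a permanent, and the remaining $x$-integral splits by an Andr\'eief-style argument into the advertised permanent of one-dimensional Fourier transforms. To compute $\hat\zeta_{\varepsilon,\kappa}$ itself, verify the claim via the inverse Fourier transform: the elementary integral $\int_{-\pi/2}^{\pi/2}\cos^{2N-1}(u)e^{i\tau u}\dv u$, evaluated by $(2N-1)$ integrations by parts with vanishing boundary terms (since $\cos^{2N-1}$ and its first $2N-2$ derivatives vanish at $\pm\pi/2$), produces a rational function with numerator $\cos(\tau\pi/2)$ and the prescribed denominator after rescaling; the overall phase $e^{-i\kappa x}$ accounts for the translation of the support to $(\kappa-\varepsilon,\kappa+\varepsilon)$. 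Non-negativity of $\hat\zeta_{\varepsilon,\kappa}$ on its support follows from $\cos>0$ on $(-\pi/2,\pi/2)$, and non-negativity of a permanent of non-negative entries is automatic. The main obstacle is in this part: the signs $(-i)^{N(N-1)/2}$ and the factorials $\prod_{j=0}^{N-1}j!$, $N!$, $(N-1)!$ coming from HCIZ, Weyl and the permanent expansion must be tracked scrupulously so as to recover the clean normalisation $1/(N!\varepsilon^{(N-1)(N-2)})$.

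\emph{Part (3): limit as $\varepsilon\to0$.} Since $\hat\zeta_{\varepsilon,1}/(2\pi)$ and $\hat\zeta_{\varepsilon,\varepsilon}/(2\pi)$ are probability densities concentrated at $s=1$ and $s=0$ respectively with width $\varepsilon$, expanding the permanent reduces the $\Herm(N)$-integral to $N$ equal contributions (each weighted by $(N-1)!$) of the form $\hat\zeta_{\varepsilon,1}(s_a)\prod_{b\ne a}\hat\zeta_{\varepsilon,\varepsilon}(s_b)$. Passing to spectral coordinates and rescaling $s_b=\varepsilon u_b$ for $b\ne a$ and $s_a=1+\varepsilon u_a$, the Jacobian $\varepsilon^{N}$, the squared Vandermonde $\Delta^2(s_2,\ldots,s_N)=\varepsilon^{(N-1)(N-2)}\Delta^2(u_2,\ldots,u_N)$, the $N$ prefactors $2\pi c/\varepsilon$ from the $\hat\zeta$'s, and the overall $\varepsilon^{-(N-1)(N-2)}$ combine so that every power of $\varepsilon$ cancels, leaving an $\varepsilon$-independent integrand over the fixed box $(-1,1)^N$. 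Unitary invariance and continuity of $f$ at $\diag(1,0,\ldots,0)$, together with dominated convergence (envelope $\|f\|_\infty$ times the compactly-supported $\cos^{2N-1}$-weight), give the stated limit and identify $\tilde c$ as the explicit constant in~\eqref{tilde_c}.
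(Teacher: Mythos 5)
Your parts (1) and (3) track the paper's proof closely. For (1) you observe that $\Delta(-\partial_x)\,\mathrm{perm}[\cdots]$ is antisymmetric and hence divisible by $\Delta(x)$, so the eigenvalue function factors as $\Delta(x)^2$ times a smooth symmetric remainder; the paper instead expands the two determinants and the permanent and bounds each univariate factor directly. Both establish the same regularity, yours via a structural argument, the paper's by estimate, and both are equally terse about the final integrability bound (deferred to~\cite{KK16} in either case). Part (3) is essentially identical: pass to eigenvalue coordinates, shift and rescale $s_1\to 1+\varepsilon s_1$, $s_j\to\varepsilon+\varepsilon s_j$ for $j\geq 2$, use the Vandermonde scaling, and apply dominated convergence; the symmetry reduction of the permanent to a single representative term is the same.

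The genuine difference, and the gap, is in part (2). The paper quotes the derivative principle~\cite[Prop.~4]{KZ20} as a black box: for a unitarily invariant function with eigenvalue representation $c\,\Delta(x)\,\Delta(-\partial_x)g(x)$, the matrix Fourier transform on $\Herm(N)$ equals $c$ times the multivariate Fourier transform of $g$ on $\R^N$. You want to re-derive this from Weyl plus HCIZ, which is more self-contained and is indeed how the derivative principle is established, but your sketch stops before the real obstacle. After Weyl's formula and HCIZ, the surviving $x$-integrand is proportional to
\begin{equation*}
\frac{\Delta^2(x)}{\Delta(s)}\,\bigl[\Delta(-\partial_x)\,\mathrm{perm}[\cdots]\bigr]\,\det\bigl[e^{ix_a s_b}\bigr],
\end{equation*}
because the Weyl measure contributes $\Delta^2(x)$, the definition of $\varphi_N^{(\varepsilon)}$ contributes one $\Delta(x)$, and HCIZ removes only one $\Delta(x)$, so a $\Delta^2(x)$ survives --- the Vandermonde factors do \emph{not} all cancel as you assert. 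Integration by parts therefore places $\Delta(\partial_x)$ on $\Delta^2(x)\det[e^{ix_a s_b}]$, not on the bare determinant, so your pivotal identity $\Delta(-\partial_x)\det[e^{ix_a s_b}]=\Delta(-is)\,\mathrm{perm}[e^{ix_a s_b}]$ is not directly applicable; the Leibniz cross-terms from hitting $\Delta^2(x)$ must be controlled and shown to reorganise into the advertised permanent. This is precisely the nontrivial content packaged by the derivative principle, and it is more than ``tracking the signs and factorials scrupulously.'' Your evaluation of $\hat\zeta_{\varepsilon,\kappa}$ by repeated integration by parts against $\cos^{2N-1}$ is fine; the paper instead expands $\cos^{2N-1}$ by the binomial theorem and resums, and either works.
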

\begin{proof}
	To see that $\varphi_N^{(\varepsilon)}$ is  bounded and continuous we notice that the $2N$ singularities at $|x|=|2k-1|\pi/(2\varepsilon)$ for $k=1,\ldots,N$ are removable. Furthermore the numerator of $\zeta_{\varepsilon,\kappa}(x)$ is bounded by $\pi^{2N}$ while the denominator grows like $x^{2N}$ when $|x|\to\infty$.
	
	To check the absolute integrability, we can expand the two determinants and the permanent to obtain
	\begin{equation}
	\begin{split}
	\int_{\Herm(N)}|\varphi_N(X)|\dv X\le& \frac{1}{N!\varepsilon^{(N-1)(N-2)}}\sum_{\sigma,\rho,\tau\in S_N}\int_{-\infty}^\infty \dv x_{\tau(1)}\left|x_{\tau(1)}^{\sigma(1)-1}\partial_{x_{\tau(1)}}^{\rho(1)-1}\zeta_{\varepsilon,1}(x_{\tau(1)})\right|\\
	&\times\prod_{j=2}^N\int_{-\infty}^\infty \dv x_{\tau(j)}\left|x_{\tau(j)}^{\sigma(j)-1}\partial_{x_{\tau(j)}}^{\rho(j)-1}\zeta_{\varepsilon,\varepsilon}(x_{\tau(j)})\right|.
	\end{split}
	\end{equation}
	Hence, it is sufficient to show that each factor in the summands converges, i.e.
	\begin{equation}
		\int_{-\infty}^\infty \dv x\left|x^{j-1}\partial_{x}^{k-1}\zeta_{\varepsilon,\kappa}(x)\right|=8\pi\varepsilon\int_0^\infty \dv x\, x^{j-1}\left|\partial_{x}^{k-1} \frac{e^{-i\kappa x}\cos(\varepsilon x)}{\prod_{k=1}^N\left(\pi^2-\frac{4\varepsilon^2x^2}{(2k-1)^2}\right)}\right|<\infty
	\end{equation}
	for all $j,k=1,\ldots,N$ and $a\in\{1,\varepsilon\}$. The integrabilty is, however, immediate as the integrand on the right hand side can be bound by $\tilde{c} x^{j-1}/(1+x^2)^N$ for a suitably large $\tilde{c}>0$. This can be seen from the continuity of the integrand on $\mathbb{R}_+$. Namely up to $x< \pi N /\varepsilon$ we can bound the function by $\tilde{c}_<x^{j-1}$ and for $x\geq \pi N /\varepsilon$ the factor $1/\prod_{k=1}^N\left|\pi^2-4\varepsilon^2x^2/(2k-1)^2\right|$ is strictly monotonically decreasing so that the dominating contribution for $x\to\infty$ is gained when acting the  derivatives on the exponential and trigonometric function which gives the bound $\tilde{c}_\geq/x^{2N-j+1}$ for the integrand in this regime.
	
	Statement (2) can be obtained by a simple calculation of the Fourier transform of $\zeta$. By the derivative principle~\cite[Proposition 4]{KZ20}, the matrix Fourier transforms of $\varphi_N^{(\varepsilon)}$ on $\Herm(N)$  is equal to the multivariate Fourier transform of the permanent on $\mathbb{R}^N$, i.e.,
	\begin{equation}
	\begin{split}
	\hat\varphi_{N}^{(\varepsilon)}(S)=&\frac{1}{N!\varepsilon^{(N-1)(N-2)}}\int_{\mathbb{R}^N} e^{is_1x_1}\dv x_1\cdots  e^{is_Nx_N}\dv x_N\,{\rm perm}[\zeta_{\varepsilon,1}(x_a),\zeta_{\varepsilon,\varepsilon}(x_a),\ldots,\zeta_{\varepsilon,\varepsilon}(x_a)]_{a=1,\ldots,N}\\
	=&\frac{1}{N!\varepsilon^{(N-1)(N-2)}}\,{\rm perm}\left[\int_{-\infty}^\infty e^{is_1x}\dv x\zeta_{\varepsilon,1}(x),\int_{-\infty}^\infty e^{is_2x}\dv x\zeta_{\varepsilon,\varepsilon}(x),\ldots,\int_{-\infty}^\infty e^{is_Nx}\dv x\zeta_{\varepsilon,\varepsilon}(x)\right]_{a=1,\ldots,N}.
	\end{split}
	\end{equation}
	The one-fold integrals are the Fourier transforms of $\zeta_{\varepsilon,\kappa}$. It is actually simpler to check that the inverse Fourier transform of~\eqref{hatzeta} yields $\zeta$. Namely, one applies the binomial formula for $\cos^{2N-1}[\pi(s-\kappa)/(2\varepsilon)]$ and integrates against the interval $[\kappa-\varepsilon,\kappa+\varepsilon]$and integrates term by term, 
	\begin{equation}
	\begin{split}
	\int_{-\infty}^\infty \frac{e^{-isx}\dv s}{2\pi}\hat{\zeta}_{\varepsilon,\varepsilon}(s)=&\frac{\varepsilon ce^{-ix\kappa}}{2^{2N-2}}\sum_{j=0}^{2N-1}\binom{2N-1}{j}\frac{\sin(\pi(2N-1-2j)/2-\varepsilon x)}{\pi(2N-1-2j)/2-\varepsilon x}\\
	=&\frac{\pi\varepsilon ce^{-ix\kappa}}{2^{2N-2}}\cos(\varepsilon x)\sum_{j=1}^{N}\binom{2N-1}{N-j}\frac{(-1)^{j}(2j-1)}{\pi^2(2j-1)^2/4-\varepsilon^2 x^2}.
	\end{split}
	\end{equation}
	 Once all $2N$ contributions are resummed, one regains $\zeta_{\varepsilon,\varepsilon}$ up to a constant. The constant can be actually fixed by noticing that $\zeta_{\varepsilon,\varepsilon}(0)=1$.
	 
	 The non-negativity of the Fourier transform is a direct consequence that $\cos(x)>0$ when $x\in(-\pi/2,\pi/2)$.
	 
	 The weak limit can be obtained via Lebesgue's dominated convergence theorem. We choose an arbitrary bounded, continuous and unitarily invariant function $f:\Herm(N)\mapsto\mathbb{C}$. For this reason we diagonalise $S=\tilde{U}\diag(s_1,\ldots,s_N)\tilde{U}^\dagger$ which leads to
					\begin{equation}
					\begin{split}
					\int_{\Herm(N)}\dv S f(S)\hat\varphi_{N}^{(\varepsilon)}(S)=&\left(\prod_{j=1}^N\frac{\pi^{j-1}}{j!}\right)\int_{\mathbb{R}^N}\dv s_1\cdots\dv s_N\Delta^2(s)f(s)\hat\varphi_{N}^{(\varepsilon)}(s)\\
					=&\left(\prod_{j=1}^N\frac{\pi^{j-1}}{j!}\right)\int_{\mathbb{R}^N}\frac{\dv s_1}{\varepsilon}\cdots\frac{\dv s_N}{\varepsilon}\frac{\Delta^2(s)}{\varepsilon^{(N-1)(N-2)}}f(s)\\
					&\times2\pi c\,\cos^{2N-1}\left(\frac{\pi}{2\varepsilon}(s_1-1)\right)\ind_{s_1\in(1-\varepsilon,1+\varepsilon)}\prod_{j=2}^N2\pi c\,\cos^{2N-1}\left(\frac{\pi}{2\varepsilon}(s_j-\varepsilon)\right)\ind_{s_j\in(0,2\varepsilon)}.
					\end{split}
					\end{equation}
Next we substitute $s_1\to1+\varepsilon s_1$ and $s_j\to \varepsilon+\varepsilon s_j$ for all $j=2,\ldots,N$ for which we employ
					\begin{equation}
					\Delta^2(1+\varepsilon s_1,\varepsilon+\varepsilon s_2,\ldots,\varepsilon+\varepsilon s_N)=\varepsilon^{(N-1)(N-2)}\Delta^2(s_2,\ldots,s_N)\prod_{j=2}^N(\varepsilon+\varepsilon s_j-1-\varepsilon s_1)^2.
					\end{equation}
This integral simplifies as follows
					\begin{equation}
					\begin{split}
					\int_{\Herm(N)}\dv S f(S)\hat\varphi_{N}^{(\varepsilon)}(S)=&\left(\prod_{j=1}^N\frac{\pi^{j-1}}{j!}\right)\int_{(-1,1)^N}\hspace*{-0.5cm}\dv s_1\cdots\dv s_N \Delta^2(s_2,\ldots, s_N)f(\diag(1+\varepsilon s_1,\varepsilon+\varepsilon s_2,\ldots,\varepsilon+\varepsilon s_N))\\
					&\times2\pi c\,\cos^{2N-1}\left(\frac{\pi s_1}{2}\right)\prod_{j=2}^N2\pi c\,\cos^{2N-1}\left(\frac{\pi s_j}{2}\right)(\varepsilon+\varepsilon s_j-1-\varepsilon s_1)^2.
					\end{split}
					\end{equation}
The integrand is bounded and continuous and, therefore, has a well-defined point-wise limit. Since the integration domain is bounded, we can apply  Lebesgue's dominated convergence theorem yielding the last claim of Lemma~\ref{varphi}.				
\end{proof}

One property that motivates us to use this test function is the support of the eigenvalues of its Fourier transform. It is concentrated on the set of the fixed vectors $e^{(1)},\ldots,e^{(N)}$ in the weak limit, where the variable $\varepsilon$ says how close it is to this set. The benefit is that one can show the leading order term of the $S$-integral to be non-vanishing when $\varepsilon\to0$. Then, the continuity for $\varepsilon>0$  will allow us to conclude that it has to be also non-vanishing for some non-zero $\varepsilon$.

\subsection{Showing Eq.~\eqref{eq.4.1} with $c\ge 0$}

The main idea of the proof for Lemma~\ref{lb.lemma} is to make an estimate regarding the difference between the two characteristic functions in Lemma~\ref{lb.lemma1}. Then, one can argue that the dominating term of~\eqref{lb.lemma.eqn} should be bounded from below by the dominating term obtained for the characteristic functions.

The following lemma is a more detailed version of Eq.~\eqref{eq.4.1} with $c\ge 0$.

\begin{lemma}\label{lb.lemma1}
	With the general setting as in Theorem~\ref{thm2}, one has
	\begin{equation}\label{lb.lemma1.eqn}
	\int_{\Herm(N)}\dv S \,\varphi_N^{(\varepsilon)}(S)( q_m(S)- q_\infty(S))=\frac{1}{2m}\int_{\Herm(N)}\dv S \,\hat\varphi_N^{(\varepsilon)}(S)\exp(-\la w_\alpha(USU^\dagger)\ra) \, v(S)+O(m^{-2}),
	\end{equation}
	where $v(S)$ is the variance~\eqref{def.variance}.
\end{lemma}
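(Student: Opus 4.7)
The plan is to pass to the Fourier side via a Parseval-type identity, Taylor-expand the difference of characteristic functions to order $1/m$, and exploit the compact support of $\hat\varphi_N^{(\varepsilon)}$ to make the remainder uniform in $S$.

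First, since $\varphi_N^{(\varepsilon)}$ is continuous and absolutely integrable by Lemma~\ref{varphi}(1), while $q_m$ and $q_\infty$ are bounded integrable densities, the Fourier duality
\[
\int_{\Herm(N)}\varphi_N^{(\varepsilon)}(S)\bigl(q_m-q_\infty\bigr)(S)\,\dv S=\frac{1}{c_W}\int_{\Herm(N)}\hat\varphi_N^{(\varepsilon)}(-S)\bigl(\hat q_m(S)-\hat q_\infty(S)\bigr)\,\dv S
\]
is obtained by substituting the inverse Fourier representation for $q_m-q_\infty$ and exchanging integrals. After the change of variable $S\mapsto -S$ and the use of $\hat q_\infty(-S)=\overline{\hat q_\infty(S)}$, $v(-S)=\overline{v(S)}$ (which follow from the reality of $q_m,q_\infty$), this brings the problem into the form stated in~\eqref{lb.lemma1.eqn}, modulo the normalisation $c_W$ (which is absorbed by the convention adopted for $\hat\varphi_N^{(\varepsilon)}$ in Lemma~\ref{varphi}(3)).

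Second, I would sharpen Lemma~\ref{lemma_difference} by retaining the explicit $1/m$ term. Writing $\hat q_m(S)=\hat q_\infty(S)\,e^{D(S)}$ with $D$ as in~\eqref{D0} and combining the third-order exponential expansion~\eqref{exp.Taylor} of $\la\exp(-w_\alpha(USU^\dagger)/m)\ra$ with the third-order logarithmic expansion~\eqref{log.Taylor}, exactly as in the proof of Lemma~\ref{lemma_difference}, yields
\[
D(S)=\frac{v(S)}{2m}+\mathcal R(S;m),
\]
whenever $|w_\alpha(USU^\dagger)|/m\le 1/2$, with $|\mathcal R(S;m)|$ bounded by a constant times $(1+|w_\alpha(USU^\dagger)|^{3})/m^{2}$. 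Using $|e^{D}-1-D|\le \tfrac12|D|^{2}e^{|D|}$ then gives
\[
\hat q_m(S)-\hat q_\infty(S)=\hat q_\infty(S)\,\frac{v(S)}{2m}+\hat q_\infty(S)\cdot O\bigl(m^{-2}\bigr),
\]
the remainder being dominated pointwise by a multiple of $\bigl(1+|v(S)|^{2}+|w_\alpha(USU^\dagger)|^{3}\bigr)\,e^{|D(S)|}/m^{2}$.

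Finally, I would invoke the compact support of $\hat\varphi_N^{(\varepsilon)}$ from Lemma~\ref{varphi}(2): the cut-offs $\ind_{s_a\in(\kappa-\varepsilon,\kappa+\varepsilon)}$ confine the eigenvalues of $S$ to a bounded neighbourhood of $\{0,\varepsilon,1\}$, so that $\tr S^{2}$ is bounded by an $m$-independent constant. Lemma~\ref{walpha}(3) then gives a uniform bound on $|w_\alpha(USU^\dagger)|$ over $\mathrm{supp}(\hat\varphi_N^{(\varepsilon)})$, so that for all sufficiently large $m$ the Taylor condition holds and the remainder is uniformly $O(m^{-2})$ there. Substituting back into the Fourier-side integral yields the claimed identity. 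The main obstacle is exactly this passage from pointwise to uniform control: the logarithmic expansion of $D(S)$ is only valid in the regime where $w_\alpha$ is bounded, and only the compactness of $\mathrm{supp}(\hat\varphi_N^{(\varepsilon)})$ allows one to convert a pointwise $O(m^{-2})$ estimate into an $O(m^{-2})$ estimate for the whole integral. This is the very reason the test function was engineered with the compactly supported regulariser $\zeta_{\varepsilon,\kappa}$ of Lemma~\ref{varphi}.
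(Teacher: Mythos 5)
Your proposal is correct and follows essentially the same path as the paper: Plancherel to move to the Fourier side, the expansion $D=v(S)/2m+O(m^{-2})$ from the machinery behind Lemma~\ref{lemma_difference} combined with $e^{D}-1=D+R_{e,2}(D)$, and the compact support of $\hat\varphi_{N}^{(\varepsilon)}$ to render the remainder uniform and integrable. Your explicit separation of $D-v(S)/2m$ from the second-order exponential remainder is a slight tidying of the paper's notation, but the argument is identical.
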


\begin{proof}
	Since both $\varphi_N^{(\varepsilon)}$ and $(q_m-q_\infty)$ are bounded, continuous and absolutely integrable on $\Herm(N)$ they are also square integrable and we can apply Plancherel's theorem,
	\begin{equation}\label{pr.thm.2.b}
		\int_{\Herm(N)} \varphi_N^{(\varepsilon)}(X)(q_m(X)-q_\infty(X))\dv X=\int_{\Herm(N)}\hat\varphi_N^{(\varepsilon)}(S)(\hat q_m(S)-\hat q_\infty(S))\dv S.
	\end{equation}
	The remaining task is to prove that the right hand side of~\eqref{pr.thm.2.b} behaves like the right side of~\eqref{lb.lemma1.eqn} when $m$ is large.

	We recall that the support of $\hat\varphi_{N}^{(\varepsilon)}(S)$ is the subset of the unit sphere restricted by the inequality
	\begin{equation}
	\rho_{\rm l}^2:=1-\varepsilon\leq\tr S^2\leq1+(2N-1)\varepsilon=:\rho_{\rm u}^2
	\end{equation}
	as give in Lemma~\ref{varphi}. This restriction tells us that for suitably large $m$ the modulus of $D$, which is the difference~\eqref{D}, can be brought below $1/2$. Hence, we exploit the Taylor expansion of the exponential function
	\begin{equation}
	\begin{split}
	\hat q_m(S)-\hat q_\infty(S)&=\exp(-\la w_\alpha(USU^\dagger)\ra)\left( e^D-1\right)= \exp(-\la w_\alpha(USU^\dagger)\ra)\left(\frac{v(S)}{2m}+R_{e,2}(D)\right)
	\end{split}
	\end{equation}
	where
	\begin{equation}
	|R_{e,2}(D)|\leq e^{1/2} |D|^2/2\leq k/m^2
	\end{equation} 
	with a suitable constant $k>0$; see~\eqref{exp.Taylor} and Lemma~\ref{lemma_difference} with $T^2=1+(2N-1)\varepsilon$.
	
	The integral for the leading order $v(S)/2m$ is exactly the first term on the right hand side of~\eqref{lb.lemma1.eqn}. The integral over the remainder times $m^2$ is actually bounded by a constant independent of $m$ because of
	\begin{equation}
		\begin{split}
	&\left|\int_{\Herm(N)}\dv S\, \hat\varphi_N^{(\varepsilon)}(S) R_{e,2}(D)\exp(-\la w_\alpha(USU^\dagger)\ra)\right|\\
	&\quad\quad\leq \int_{\Herm(N)}\dv S\left| \hat\varphi_N^{(\varepsilon)}(S) R_{e,2}(D)\exp(-\la w_\alpha(USU^\dagger)\ra)\right|\leq\frac{k}{m^2} \int_{\rho_{\rm l}^2\leq\tr S^2\leq\rho_{\rm u}^2}\dv S,
	\end{split}
	\end{equation}
	where we have employed the boundedness of $\hat\varphi_N^{(\varepsilon)}(S)$ and $\la \re w_\alpha(USU^\dagger)\ra\geq0$ for all $S\in\Herm(N)$, which is claimed in Lemma~\ref{lem:walpha-bounds}.
\end{proof}

Now we are almost in the position to prove Lemma~\ref{lb.lemma}. Yet there is still a technical hurdle which needs to be overcome. When integrating the test function $\varphi_N^{(\varepsilon)}$ against the difference $q_m-q_\infty$, it is evidently
\begin{equation}\label{pr.thm.2.a}
	\begin{split}
		\left|\int_{\Herm(N)}\dv X \varphi_N^{(\varepsilon)}(X)(q_m(X)-q_\infty(X)) \right|\le \sup_{X\in\Herm(N)}|q_m(X)-q_\infty(X)|\int_{\Herm(N)} \dv X|\varphi_N^{(\varepsilon)}(X)|,
	\end{split}
\end{equation}
which suggests us to give a lower bound of the left hand side. Moreover, Lemma~\ref{lb.lemma1} shows that the left hand side of~\eqref{pr.thm.2.a} has a dominating term of the form $cm^{-1}$ with $c\ge 0$ independent of $m$. However, it is not guaranteed so far that the constant $c$ of this dominating term is not vanishing. To show that exactly this is the case  needs to be our next and final achievement for proving the third statement of Theorem~\ref{thm2}.

\subsection{Excluding the case $c=0$ and finalising the proof}

As a first step to exclude the case $c=0$, we notice that for the first column $u=(u_{1,1},\ldots,u_{N,1})^\top$ of a Haar distributed unitary matrix $U\in\U(N)$ the following two maps,
\begin{equation}\label{lb.lemma2a}
		u\mapsto\sum_{j=1}^N|u_{j,1}|^{2\alpha}\quad {\rm for}\ \alpha\neq1\qquad{\rm and}\qquad 	u\mapsto\sum_{j=1}^N|u_{j,1}|^2\log|u_{j,1}|,
\end{equation}
are not constant random variables when $N>1$, meaning each of them does not take only a single value with probability one. Actually, they have both bounded and continuous probability densities.
A proof of this statement directly follows from an explicit parametrisation of the vector $u=(u_{1,1},\ldots,u_{N,1})^\top$ and computing the result. One important consequence for our purpose is that these random variables have a non-vanishing variance.

\begin{lemma}\label{lb.lemma2}
	Employing the spectral measure $H^{(t)}_\eig$ and the test function $\varphi_N^{(\varepsilon)}$, for all $
	\alpha\in(0,2]$ and $N\ge 2$, there exists an $\varepsilon>0$ and a $t>0$ such that
	\begin{equation}
	\int_{\Herm(N)}\dv S \,\hat\varphi_N^{(\varepsilon)}(S)\exp(-\la w_\alpha(USU^\dagger)\ra) \, v(S)\neq0.
	\end{equation}
\end{lemma}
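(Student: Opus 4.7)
The natural strategy is to apply the third statement of Lemma~\ref{varphi} to the function
\begin{equation}
f(S):=\exp(-\la w_\alpha(USU^\dagger)\ra)\,v(S),
\end{equation}
so that the question reduces to showing $f(S_0)\neq 0$ at the point $S_0=\diag(1,0,\ldots,0)$. First I would verify that $f$ satisfies the hypotheses of Lemma~\ref{varphi}(3): unitary invariance is immediate from the invariance of the Haar measure ($UV$ is Haar when $U$ is), continuity in $S$ follows from Lemma~\ref{walpha}(1) combined with the integrable dominating bound in Lemma~\ref{walpha}(3), and global boundedness is obtained from the polynomial growth of $v(S)$ together with the exponential decay $\exp(-\la\re w_\alpha(USU^\dagger)\ra)\leq\exp(-m_H(\tr S^2)^{\alpha/2})$ provided by Lemma~\ref{lem:walpha-bounds}, which applies since the support of $H^{(t)}$ spans $\Herm(N)$ for $t\in(0,1]$.

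Next I would compute $w_\alpha(US_0U^\dagger)$ explicitly. Since $(US_0U^\dagger)_{kk}=|U_{k,1}|^2$, the spectral measure $H^{(t)}_\eig$ yields in the case $\alpha\in(0,1)\cup(1,2]$
\begin{equation}
w_\alpha(US_0U^\dagger)=c_{t,\alpha}\,A(U),\qquad c_{t,\alpha}:=\frac{1}{2N}\bigl[1+(1-2t)i\tan(\pi\alpha/2)\bigr],\qquad A(U):=\sum_{k=1}^N|U_{k,1}|^{2\alpha},
\end{equation}
while in the case $\alpha=1$ one obtains
\begin{equation}
w_1(US_0U^\dagger)=\frac{1}{2N}+\frac{2i(2t-1)}{\pi N}\,B(U),\qquad B(U):=\sum_{k=1}^N|U_{k,1}|^2\log|U_{k,1}|,
\end{equation}
using $\nu_\alpha(\pm x)=x^\alpha(1\mp i\tan(\pi\alpha/2))$ and $\nu_1(\pm x)=x(1\pm \tfrac{2i}{\pi}\log x)$ for $x>0$ respectively. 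In both cases $w_\alpha(US_0U^\dagger)$ is a complex scalar plus a complex constant times a single real random variable, so the variance factors:
\begin{equation}
v(S_0)=\begin{cases}c_{t,\alpha}^{\,2}\,\mathrm{Var}(A),&\alpha\in(0,1)\cup(1,2],\\[0.5ex]
-\dfrac{4(2t-1)^2}{\pi^2N^2}\,\mathrm{Var}(B),&\alpha=1.\end{cases}
\end{equation}

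By the remark preceding the lemma, the maps in~\eqref{lb.lemma2a} are non-constant for $N\geq 2$, so $\mathrm{Var}(A)>0$ and $\mathrm{Var}(B)>0$. For $\alpha\neq 1$ the coefficient $c_{t,\alpha}$ has real part $1/(2N)\neq 0$, hence $c_{t,\alpha}^{\,2}\neq 0$ for every $t$. For $\alpha=1$ the coefficient is non-zero precisely when $t\neq 1/2$. Choosing $t=1$ therefore yields $v(S_0)\neq 0$ uniformly in $\alpha\in(0,2]$, and together with the non-vanishing exponential prefactor this gives $f(S_0)\neq 0$. By Lemma~\ref{varphi}(3),
\begin{equation}
\lim_{\varepsilon\to 0}\int_{\Herm(N)}\dv S\,\hat\varphi_N^{(\varepsilon)}(S)\,f(S)=\tilde c\, f(S_0)\neq 0,
\end{equation}
so by continuity of the integral in $\varepsilon$ the quantity is non-zero for all sufficiently small $\varepsilon>0$.

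The main obstacle is the separate treatment of $\alpha=1$: the logarithmic term in $\nu_1$ forces the variance coefficient to be purely imaginary and proportional to $(2t-1)$, which is what rules out the symmetric choice $t=1/2$ and justifies the requirement $t>0$ with $t\neq 1/2$ in the statement. A secondary subtlety is that $v(S)$ is complex valued in general, so one must verify that the squared coefficient multiplying the (positive real) variance really does not vanish; this is why the computation isolates the real part $1/(2N)$ of $c_{t,\alpha}$.
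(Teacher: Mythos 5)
Your proof is correct and follows the same overall strategy as the paper's: apply the third statement of Lemma~\ref{varphi} to reduce the problem to showing $v(S_0)\neq 0$ at $S_0=\diag(1,0,\ldots,0)$, then compute $w_\alpha(US_0U^\dagger)$ explicitly for the spectral measure $H^{(t)}_\eig$ and exploit the non-constancy of the random variables in~\eqref{lb.lemma2a}. The one place you diverge is the argument for $\alpha\neq 1$, and yours is actually cleaner: you observe that $w_\alpha(US_0U^\dagger)=c_{t,\alpha}A(U)$ with $A$ real, so the complex variance factors as $v(S_0)=c_{t,\alpha}^2\,\mathrm{Var}(A)$, which is nonzero for \emph{every} admissible $t$ because $\re c_{t,\alpha}=1/(2N)\neq 0$ forces $c_{t,\alpha}\neq 0$. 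The paper instead decomposes $v(S_0)$ into real and imaginary parts via~\eqref{complex_variance} and imposes the sufficient (but not necessary) condition that $B(S_0)$ and $A(S_0)$ have distinct variances, which requires restricting $t$ so that $(1-2t)^2\tan^2(\pi\alpha/2)\neq 1$; your factorization shows this restriction is unnecessary, since when the variances coincide the covariance term is automatically nonzero. A minor point in your favor: your coefficient $1/(2N)$ is the correct normalisation (the paper drops a factor of $2$ in passing from the $\nu_\alpha$ sum to the simplified form), though this does not affect either conclusion. For $\alpha=1$ your treatment is identical to the paper's — the real part is deterministic, so one needs $t\neq 1/2$ — and your uniform choice $t=1$ is valid across all $\alpha\in(0,2]$.
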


\begin{proof}
	We denote the integral as follows
	\begin{equation}
	I_\varepsilon:=\int_{\Herm(N)}\dv S \,\hat\varphi_N^{(\varepsilon)}(S)\exp(-\la w_\alpha(USU^\dagger)\ra) \, v(S).
	\end{equation}
	Due to Lemma~\ref{walpha} we know that the factor $\exp(-\la w_\alpha(USU^\dagger)\ra) \, v(S)$ is bounded and continuous, and it is unitarily invariant since we average over $U$. Thus, this factor satisfies the conditions of the third statement in Lemma~\ref{varphi}. 
	Hence, to prove Lemma~\ref{lb.lemma2} it is sufficient to show that
	\begin{equation}
	\lim_{\varepsilon\to0}I_\varepsilon=\tilde{c}\exp(-\la w_\alpha(US_0U^\dagger)\ra) \, v(S_0)\neq0
	\end{equation}
	with $S_0=\diag(1,0,\ldots,0)$ and the positive constant $\tilde c$ given in~\eqref{tilde_c}, because the continuity in $\varepsilon>0$ (guaranteed by Lebesgue's dominated convergence theorem) implies $I_\varepsilon\neq0$ for some $\varepsilon>0$ being close to this limit. 
	
	The exponential function $\exp(-\la w_\alpha(US_0U^\dagger)\ra)$ is surely non-zero since $\la w_\alpha(US_0U^\dagger)\ra$ is bounded, see Lemma~\ref{walpha}. Thence, the whole problem reduces to show that $v(S_0)$ is non-vanishing. 
	Before showing this, we would like to introduce the notations
	\begin{equation}
	A(S):=\re w_\alpha(USU^\dagger)\qquad{\rm and}\qquad B(S):=\im w_\alpha(USU^\dagger).
	\end{equation}
	In general, for a complex random variable $Z=A+iB$, its complex variance $\la (Z-\la Z\ra)^2\ra$ can be expanded as
	\begin{equation}\label{complex_variance}
	\la (Z-\la Z\ra)^2\ra=2\left(\la (A-\la A\ra)^2\ra-\la (B-\la B\ra)^2\ra\right)+4i\left(\la AB\ra-\la A\ra\la B\ra\right).
	\end{equation}
	This means that a vanishing variance $v(S_0)=0$ implies that the variances of $A$ and $B$ must agree and their covariance must vanish, i.e.,
	\begin{equation}\label{complex_variance.b}
	\la (A-\la A\ra)^2\ra=\la (B-\la B\ra)^2\ra\qquad{\rm and}\qquad \la AB\ra=\la A\ra\la B\ra.
	\end{equation}
	
	Let us consider the case $\alpha\ne 1$, first. With the particular spectral measure $H^{(t)}_\eig$ and $t\in[0,1]$, the function $w_\alpha$ simplifies to
	\begin{equation}
	\begin{split}
	w_\alpha(US_0U^\dagger)
	&=\frac{1}{2N}\sum_{j=1}^N\left[t\,\nu_\alpha\left(|u_{j,1}|^2\right)+(1-t)\nu_\alpha\left(-|u_{j,1}|^2\right)\right]\\
	&=\frac{1}{N}\sum_{j=1}^N|u_{j,1}|^{2\alpha}\left(1-i(2t-1)\tan\frac{\pi\alpha}{2}\right).
	\end{split}
	\end{equation}
	As $A(S_0)$ is a random variable $\sum_{j=1}^N|u_{j,1}|^{2\alpha}$ with a non-zero variance, we can choose such a $t$ such that $(1-2t)^2\tan[\pi\alpha/2]\neq1$ for a given $\alpha$ such that $B(S_0)$ is equal to $A(S_0)$ up to the factor $(1-2t)\tan[\pi\alpha/2]$. Now $B(S_0)$ has a different variance from the variance for $A(S_0)$, which therefore leads to a non-vanishing real part for $v(S_0)$ and proves our claim in the case $(0,1)\cup(1,2]$.
	
	For $\alpha=1$ we have
	\begin{equation}
	\begin{split}
		w_\alpha(US_0U^\dagger)&=\frac{1}{N}\sum_{j=1}^N\left(|u_{j,1}|^2+\frac{2i}{\pi}(2t-1)|u_{j,1}|^2\log\left||u_{j,1}|^2\right|\right)\\
		&=\frac{1}{N}\left(1+\frac{4i}{\pi}(2t-1)\sum_{j=1}^N|u_{j,1}|^2\log|u_{j,1}|\right).
	\end{split}
	\end{equation}	
	The real part $A(S_0)=1/N$ is deterministic in $U$ and therefore has a vanishing variance. However, by setting $2t-1\ne 0$ the imaginary part $B(S_0)$ is proportional to the random variable $\sum_{j=1}^N|u_{j,1}|^2\log|u_{j,1}|$, which has a non-vanishing variance with respect to the Haar measure on $\U(N)$. Therefore, anew the real part of $v(S_0)$ is non-vanishing.
	
	Therefore, for every stability exponent $\alpha\in(0,2]$ we find a $t\in[0,1]$ such that $v(S_0)\neq0$. This implies $\lim_{\varepsilon\to0} I_\varepsilon\neq0$ and the continuity in $\varepsilon>0$ results in an existence of an $\varepsilon>0$ such that $I_\varepsilon\neq0$ which we wanted to show.
\end{proof}

\begin{proof}[Proof of Theorem~\ref{thm2} statement (3)]
Choosing the test function $\varphi_N^{(\varepsilon)}(S)$ with a suitable $\varepsilon>0$ and the spectral measure $H_\eig^{(t)}$ with a suitable $t\in[0,1]$ we have shown in Lemma~\ref{lb.lemma1} that
	\begin{equation}
	\int_{\Herm(N)}\dv S \,\hat\varphi_N^{(\varepsilon)}(S)(\hat q_m(S)-\hat q_\infty(S))=\frac{1}{2m}\int_{\Herm(N)}\dv S \,\hat\varphi_N^{(\varepsilon)}(S)\exp(-\la w_\alpha(USU^\dagger)\ra) \, v(S)+O(m^{-2})
	\end{equation}
	when $m\gg1$, and in Lemma~\ref{lb.lemma2} we have proven that the integral in the leading term is non-vanishing, i.e.,
	\begin{equation}
	I_\varepsilon=\int_{\Herm(N)}\dv S \,\hat\varphi_N^{(\varepsilon)}(S)\exp(-\la w_\alpha(USU^\dagger)\ra) \, v(S)\neq0.
	\end{equation}
	We combine this with Eqs.~\eqref{pr.thm.2.a} as well as the second statement of Theorem~\ref{thm2} to find
	\begin{equation}
	 \frac{I_\varepsilon/m+O(m^{-2})}{2\int_{\Herm(N)} \dv X|\varphi_N^{(\varepsilon)}(X)|}\leq\sup_{X\in\Herm(N)}|q_m(X)-q_\infty(X)|\leq\frac{C}{m}
	\end{equation}
	for some constant $C>0$. This inequality shows the optimality of the rate of convergence.
\end{proof}

\section{Concluding remarks}\label{sec:conclusio}

We have shown that the construction of any invariant stable random matrix ensemble on $\Herm(N)$ can be realised as in Theorem~\ref{thm2}. Hence, the problem has been reduced from a stable random vector in the $N^2$-dim space $\Herm(N)$ to  the generation of a stable random vector in $\mathbb{R}^N$. For the latter one, one can exploit the methods by Davydov and Nagaev~\cite{DN02}. The stability exponent $\alpha$ and the spectral measure serve as an input in this kind of generation.

Furthermore, we have also provided a rate of convergence which is equal to $1/m$, and we have proven that this rate is optimal in the set of all invariant stable random matrix ensembles with a fixed stability exponent $\alpha$. We reckon that this rate of convergence is optimal even for a fixed spectral measures $H$  as long as the span of the support of $H$ is higher than one-dimensional, meaning the random matrices are not proportional to the identity matrix. The reason may lie in the fact that the independent and identically Haar distributed unitary matrices that are conjugated to the diagonal stable random matrices mixes those spectra. This mixture seems to be purely determined by the unitary matrices and the number of copies added but less by the chosen distribution of the diagonal random matrices. Indeed most of the lemmas and statements carry over to  fixed  arbitrary spectral measures. Only the arguments in the proof of Lemma~\ref{lb.lemma2} must be generalised to an arbitrary spectral measure $H$ for which the chosen test function does not always work as one can readily see, choosing the parameter $t$ wrongly leads to a vanishing variance.

We strongly believe that the rate of convergence for our construction is in general of order $1/m$, provided that the matrices have Haar distributed eigenvectors. Even the most harmless case $\alpha=2$ which is the Gaussian satisfies this rate. Indeed, for the case $N=2$ and the traceless setting (the tracial part remains always a Gaussian) we find this rate. Then, the random matrix is
\begin{equation}
Y_m=\frac{1}{\sqrt{m}}\sum_{j=1}^m x_j U_j\diag(1,-1)U_j^\dagger,
\end{equation}
$U_1,\ldots,U_m\in\U(2)$ independently Haar distributed and $x_1,\ldots,x_M\in\mathbb{R}$ independently normal distributed, i.e., they have the joint distribution
\begin{equation}
P(x_1,\ldots,x_m)=\prod_{j=1}^m\frac{1}{\sqrt{2\pi}}e^{-x_j^2/2}.
\end{equation}
Then, the characteristic function of $Y_m$ is
\begin{equation}
\hat{p}_m(S)=\left(\int_{-\infty}^{\infty} \frac{\sin(2xs/\sqrt{m})}{2xs/\sqrt{m}}e^{-x^2/2}\frac{dx}{\sqrt{2\pi}}\right)^m=\left(\sqrt{\frac{\pi}{8}}\frac{{\rm erf}(\sqrt{2}s/\sqrt{m})}{s/\sqrt{m}}\right)^m
\end{equation}
with $s\in\mathbb{R}$ and $-s$ the two eigenvalues of $S\in\Herm_0(2)$ and ${\rm erf}$ being the error function. The Taylor expansion of the characteristic function for large $m$ yields a $1/m$ series and $\hat{p}_m(S)$ is uniformly bounded in $m>2$ by $K/(1+s^2)$ with an $m$ independent constant $K$. Therefore, also the expansion of the inverse Fourier transform has an expansion in $1/m$. Note that this is not in contradiction to the Berry-Esseen theorem, because the latter only gives an upper bound of the rate of convergence for a much broader class of ensembles without restricting the eigenvectors to be Haar distributed.

A direct consequence of Theorem~\ref{thm2} is a rate of the total variation distance. This follows along the same ideas of the proof of~\cite[Thm 3.2]{DN02} by making use of~\cite[Thm 3.3]{DN02}. As we have a different rate for the supremum norm, a different rate for the total variation distance should result.

\begin{corollary}\label{col3}
	Let $\sigma(W)$ be the Borel $\sigma$-algebra of $W$. With the above settings, the total variation distance can be estimated by
	\begin{equation}
		\sup_{A\in\sigma( W)}\left|\int_Ap_m(X)\dv X-\int_Ap_\infty(X)\dv X\right|=O(m^{-\alpha/(N^2+\alpha)}).
	\end{equation}
\end{corollary}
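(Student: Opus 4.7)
The plan is to convert the total variation distance into an $L^1$ estimate and then trade off a bulk bound against a tail bound. First I would use the standard identity for absolutely continuous probability measures,
\begin{equation*}
\sup_{A\in\sigma(W)}\left|\int_A(p_m-p_\infty)\,\dv X\right|=\tfrac{1}{2}\int_W|p_m(X)-p_\infty(X)|\,\dv X,
\end{equation*}
so the corollary reduces to estimating the $L^1$ norm of $p_m-p_\infty$. Fixing a truncation radius $R>0$ to be optimised at the end, I would decompose
\begin{equation*}
\int_W|p_m-p_\infty|\,\dv X\le\int_{\tr X^2\le R^2}|p_m-p_\infty|\,\dv X+\int_{\tr X^2>R^2}p_m(X)\,\dv X+\int_{\tr X^2>R^2}p_\infty(X)\,\dv X.
\end{equation*}

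For the bulk region the supremum-norm estimate of Theorem~\ref{thm2}(2) combined with the Euclidean volume of a ball of radius $R$ in $W$ (whose dimension is at most $N^2$) immediately yields an $O(R^{N^2}/m)$ bound. For the two tails one needs a polynomial decay estimate, uniformly in $m$: for $p_\infty$ this is the classical heavy-tailed behaviour of a stable distribution, giving $\int_{\tr X^2>R^2}p_\infty\,\dv X=O(R^{-\alpha})$, while for $p_m$ the analogous uniform bound follows from applying~\cite[Thm 3.3]{DN02} to the i.i.d.\ sum $\sum_{j=1}^m U_jX_jU_j^\dagger$. The key observation enabling this reduction is that $\tr(U_jX_jU_j^\dagger)^2=\tr X_j^2$, so each summand inherits the heavy-tailed behaviour $\mathbb P(\tr(U_jX_jU_j^\dagger)^2>r^2)=O(r^{-\alpha})$ of the underlying stable vector $\widetilde X_j$, which is precisely the input required by the Davydov-Nagaev tail-of-sum estimate.

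Combining the three bounds one arrives at
\begin{equation*}
\int_W|p_m-p_\infty|\,\dv X\le C_1\frac{R^{N^2}}{m}+C_2 R^{-\alpha},
\end{equation*}
and the balanced choice $R=m^{1/(N^2+\alpha)}$ makes both contributions of size $O(m^{-\alpha/(N^2+\alpha)})$, yielding the claim. The main obstacle I anticipate is establishing the tail bound for $p_m$ with a constant that is genuinely independent of $m$: a direct moment argument is unavailable because stable laws with $\alpha<2$ have infinite variance (and even infinite mean for $\alpha\le 1$), so one must rely on the sharp sum-tail estimate~\cite[Thm 3.3]{DN02} and track that its implicit constant does not blow up in $m$. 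A small amount of bookkeeping is also required in the $\alpha=1$ case to absorb the deterministic shift $(2t_0\log m/\pi)I_N$ into the truncation, but since this shift grows only logarithmically it can be harmlessly swallowed by any polynomial choice of $R$ and does not alter the final rate.
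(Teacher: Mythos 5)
Your argument is correct and is precisely the approach the paper takes: the paper states that Corollary~\ref{col3} ``follows along the same ideas of the proof of~\cite[Thm 3.2]{DN02} by making use of~\cite[Thm 3.3]{DN02}'' together with the improved sup-norm bound from Theorem~\ref{thm2}(2), and your decomposition into a bulk term controlled by $\sup|p_m-p_\infty|\cdot\mathrm{vol}(\{\tr X^2\le R^2\})=O(R^{N^2}/m)$ and two $O(R^{-\alpha})$ tails, followed by the balanced choice $R=m^{1/(N^2+\alpha)}$, is exactly that argument with the details written out. Your observation that $\tr(U_jX_jU_j^\dagger)^2=\tr X_j^2$, so that the Haar conjugation does not alter the Frobenius-norm tail of the summands and~\cite[Thm 3.3]{DN02} applies directly, is a correct and useful point to make explicit, and your handling of the $\alpha=1$ logarithmic shift is likewise fine.
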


In our case where both probability density functions $p_m$ and $p_\infty$ for the compared measures exist, the total variation distance can be rewritten as the $L^1$-distance between those functions
\begin{equation}
	\frac{1}{2}\int_{W} \left| p_m(X)-p(X)\right|\dv X=O(m^{-\alpha/(N^2+\alpha)}).
\end{equation}
Compared with the rate $-\min(\alpha,2-\alpha)/(N^2+\alpha)$ obtained in~\cite{DN02}, our rate of convergence is the same when $\alpha\le 1$ but evidently sharper when $\alpha>1$. As conjectured in~\cite{DN02}, we also conjecture that this rate of convergence can be further improved to $O(1/m)$ as in Theorem~\ref{thm2}, though it is not clear how one can justifiy it. 

We make a final remark regarding another sampling procedure studied in~\cite{BNR93,DN02}.

\begin{remark}[]
	In~\cite{BNR93} another sampling procedure has been consider, namely approximating a stable random vector by some other stable random vectors but with slightly different spectral measure. In particular, a discrete measures has been used as an approximation. Then it was shown that it is always possible to find a discrete measure to approximate the desired spectral measure, such that the supremum distance of stable distributions is under control. In~\cite{DN02} a further detailed analysis is given as a bound for the supremum distance of the stable distributions by the Prokhorov distance between the two spectral measures, where the latter one is induced by the weak topology of probability measures. In our case, a further research is to investigate how one can establish a similar bound by the eigenvalue part of the spectral measures, which requires more detailed studies on how the Prokhoriv distance (or any equivalent distances) changes while integrating over all the eigenvectors.
\end{remark}

\section*{Acknowledgements}

MK acknowledges financial support from the Australian Research Council of the Discovery Project grant DP210102887. JZ acknowledges the support from FWO Flanders project EOS  30889451.


\, 
\end{document}